\documentclass[11pt,reqno]{amsart}
\usepackage{geometry}
\geometry{paper=a4paper,left=30mm,right=35mm,top=40mm}
\usepackage{xcolor}
\usepackage{amsthm}
\usepackage{amssymb}
\usepackage{hyperref}
\usepackage{arydshln}
\usepackage{bbm}
\usepackage{mathrsfs}

\makeatletter
\numberwithin{equation}{section}
\numberwithin{figure}{section}


\newcommand{\C}{\mathbb{C}}
\newcommand{\R}{\mathbb{R}}
\newcommand{\N}{\mathbb{N}}

\newcommand{\F}{\mathcal{F}}

\renewcommand{\P}{\mathbb{P}}
\newcommand{\E}{\mathbb{E}}
\newcommand{\e}{\varepsilon}

\newcommand{\1}{\mathbbm{1}}

\newtheorem{Theorem}{Theorem}[section]
\newtheorem{Proposition}[Theorem]{Proposition}\newtheorem{Corollary}[Theorem]{Corollary}\newtheorem{Lemma}[Theorem]{Lemma}\newtheorem{Remark}[Theorem]{Remark}\newtheorem{Example}[Theorem]{Example}

\setlength{\topskip}{\ht\strutbox} 

\hfuzz1pc 

\numberwithin{equation}{section}

\makeatother

\begin{document}
\title[The Volterra square-root process]{Volterra square-root process: Stationarity and regularity of the law}
\author{Martin Friesen}
\author{Peng Jin}

\address[Martin Friesen]{School of Mathematical Sciences\\
Dublin City University\\ Glasnevin, Dublin 9, Ireland}

\email{martin.friesen@dcu.ie}

\address[Peng Jin]{Division of Science and Technology \\ BNU-HKBU United International College \\ Zhuhai, China; Department of Mathematics \\ Shantou University \\ Shantou, Guangdong 515063, China}

\email{pengjin@uic.edu.cn}

\date{\today}

\subjclass[2010]{Primary 60G22; Secondary 45D05, 91G20}

\keywords{affine Volterra processes; square-root process; Volterra integral equations; mean-reversion; limiting distributions; absolute continuity}

\begin{abstract}\   The Volterra square-root process on $\R_+^m$ is an affine Volterra process with continuous sample paths. Under a suitable integrability condition on the resolvent of the second kind associated with the Volterra convolution kernel, we establish the existence of limiting distributions. In contrast to the classical square-root diffusion process, here the limiting distributions may depend on the initial state of the process.  Our result shows that the non-uniqueness of limiting distributions is closely related to the integrability of the Volterra convolution kernel. Using an extension of the exponential-affine transformation formula we also give the construction of stationary processes associated with the limiting distributions. Finally, we prove that the time-marginals as well as the limiting distributions, when restricted to the interior of the state space $\R_{+}^m$, are absolutely continuous with respect to the Lebesgue measure and their densities belong to some weighted Besov space of type $B_{1,\infty}^{\lambda}$.
\end{abstract}

\maketitle

\allowdisplaybreaks

\section{Introduction}

\subsection{General introduction}

The analysis performed in \cite{MR3805308} on intraday stock market data suggests that the volatility, seen as a stochastic process, has sample paths of very low regularity and hence is not adequately captured by existing models such as the Heston model. Moreover, classical well-established Markovian models are often not able to capture the observed term structure of at-the-money volatility skew.
To accommodate for these features, the authors propose to work with rough analogues of stochastic volatility models. The most prominent example is the \textit{rough Heston model} as studied in \cite{MR3778355,MR3861827,MR3905737}. We also refer interested readers to \cite{MR4031338,HAN2021101580,MR4283941, MR4229250, MR4332695, MR4205882} for some recent developments of this model. Extensions to multi-factor settings and general Volterra kernels have been studied in \cite{MR4019885}, where general affine Volterra processes with continuous sample paths have been constructed. Other extensions to multi-asset settings with rough correlations are recently studied in \cite{MR4370808, MR4031333}. While the rough sample path behavior observed in \cite{MR3805308} still remains controversial (see, e.g., \cite{FTW19, F21, CD22}), the newly emerged rough volatility models have proven themselves to fit the empirical data remarkably well. 

In this work we study the Volterra square-root process which provides the most general example of a continuous affine Volterra process on $\R_+^m$. We investigate their limiting distributions, construction of the stationary process, and absolute continuity of the law. Below we first recall the definition of the Volterra square-root process, and then discuss our results and the related literature. The $m$-dimensional \textit{Volterra square-root process} $X = (X_t)_{t \geq 0}$ is obtained from the stochastic Volterra equation
\begin{align}\label{eq: VCIR}
 X_t = x_0 + \int_0^t K(t-s)\left( b + \beta X_s \right)ds + \int_0^t K(t-s)\sigma(X_s)dB_s,
\end{align}
where $\sigma(x) = \mathrm{diag}(\sigma_1 \sqrt{x_1}, \dots, \sigma_m \sqrt{x_m})$, $x_0 \in \R_+^m$, $K \in L_{loc}^2(\R_+; \R^{m \times m})$, and $(B_t)_{t \geq 0}$ is a standard Brownian motion on $\R^m$.
Here and below we call $(b,\beta,\sigma, K)$ \textit{admissible}, if they satisfy
\begin{enumerate}
    \item[(i)] $b \in \R_+^m$;
    \item[(ii)] $\beta = (\beta_{ij})_{i,j = 1,\dots, m} \in \R^{m\times m}$ is such that
    $\beta_{ij} \geq 0$ for all $i\neq j$;
    \item[(iii)] $\sigma = (\sigma_1,\dots, \sigma_m)^{\top} \in \R_+^m$;
    \item[(iv)] The kernel $K$ is diagonal with $K = \mathrm{diag}(K_1,\dots, K_m)$, where the scalar kernels $K_{i}\in L_{\mathrm{loc}}^{2}(\mathbb{R}_{+},\mathbb{R})$, $i = 1,\dots, m$;
    \item[(v)]There exist constants $\gamma \in (0,2]$ and $C_1 > 0$ such that
    \[
     \int_{0}^{h}|K_i(r)|^{2}dr\leq C_1h^{\gamma},\qquad h\in[0,1], \ \ i = 1,\dots, m,
    \]
    and for each $T > 0$ there exists $C_2(T) > 0$ such that
    \[
     \int_{0}^{T}|K_i(r+h)-K_i(r)|^{2}dr \leq C_2(T) h^{\gamma}, \qquad h \in [0,1], \ \ i = 1, \dots, m;
    \]
    \item[(vi)] For each $i = 1,\dots, m$ and each $h \in [0,1]$ the shifted kernel $t \longmapsto K_i(t+h)$
    is nonnegative, not identically zero, nonincreasing and continuous on $(0,\infty)$, and its resolvent of the first kind $L_i$ is nonnegative and nonincreasing in the sense that $s \longmapsto L_i([s,s+t])$ is nonincreasing for all $t \geq 0$.
\end{enumerate}
Note that for nonnegative, nonincreasing, and not identically zero functions $K_1,\dots, K_m$ the resolvent of the first kind always exists, see \cite[Chapter 5, Theorem 5.5]{MR1050319}. That is, there exist measures $L_1,\dots, L_m$ of locally bounded variation on $\R_+$ such that $K_i \ast L_i = L_i \ast K_i = 1$, $i=1,\ldots,m$, where $\ast$ denotes the usual convolution of functions or measures on $\R_+$.
\begin{Remark}
 If $K_1, \dots, K_m$ are completely monotone and not identically zero, then condition (vi) holds, see \cite[Chapter 5, Theorem 5.4]{MR1050319}.
\end{Remark}
For a discussion of condition (v) we refer to \cite{MR4019885}.
Given admissible parameters $(b,\beta,\sigma, K)$ it follows from
\cite[Theorem 6.1]{MR4019885} that for each $x_0 \in \R_+^m$ there exists a unique (in law) $\R_+^m$-valued weak solution $X = (X_t)_{t \geq 0}$ of \eqref{eq: VCIR} which is defined on some stochastic basis $(\Omega, \F, (\F_t)_{t\geq 0}, \mathbb{P})$ supporting an $m$-dimensional Brownian motion $(B_t)_{t \geq 0}$. Moreover, for each $\eta \in (0,\gamma/2)$, $X$ has a modification with $\eta$-H\"older continuous sample paths and satisfies $\sup_{t \in [0,T]}\E[|X_t|^p] < \infty$ for each $p \geq 2$ and $T >0$.
\begin{Example}\label{example: CIR}
 In dimension $m = 1$ with $K(t) = t^{H -1/2}/\Gamma(H + 1/2)$, we recover the \textit{rough Cox-Ingersoll-Ross process} which reads as
\begin{align*}
  X_t &= x_0 + \int_0^t \frac{(t-s)^{H-\frac{1}{2}}}{\Gamma(H + \frac{1}{2})} (b + \beta X_s) ds + \sigma \int_0^t \frac{(t-s)^{H-\frac{1}{2}}}{\Gamma(H + \frac{1}{2})}\sqrt{X_s}dB_s.
\end{align*}
 Here $(B_t)_{t \geq 0}$ is a one-dimensional Brownian motion, $x_0, b, \sigma \geq 0$, $\beta \in \R$, and $H \in (0,1/2)$. For this kernel $K$ one can choose $\gamma=2H$ in the above definition of admissible parameters, see \cite[Example 2.3]{MR4019885}. The process $(X_t)_{t\ge0}$ is neither a finite-dimensional Markov process nor a semimartingale which makes its mathematical study an interesting task.
\end{Example}

Similarly to the classical square-root process, there is a semi-explicit form for the Fourier-Laplace transform of the Volterra square-root process, i.e., it is an affine process on $\R_+^m$. To state this formula in a compact form, let us define
\[
  R_i(u) = \langle u, \beta^i \rangle + \frac{\sigma_i^2}{2}u_i^2, \qquad i=1,\ldots, m, \ \ u \in \C^m,
\]
where $\beta^i = (\beta_{1i}, \dots, \beta_{mi})^{\top}$ denotes the $i$-th column of the matrix $\beta$. Let $R = (R_1,\dots, R_m)^{\top}$ and set
$\C_-^m = \{ u \in \C^m \ : \ \mathrm{Re}(u) \leq 0 \}$.
It follows from \cite[Theorem 6.1]{MR4019885} that for each $u \in \C_-^m$ and $f \in L_{loc}^1(\R_+; \C_-^m)$
the system of Riccati-Volterra equations
\begin{align}\label{eq: riccati}
 \psi(t) = K(t)u + \int_0^t K(t-s)f(s)ds +\int_{0}^{t}K(t-s)R(\psi(s))ds
\end{align}
has a unique global solution $\psi = \psi(\cdot,u,f) \in L_{loc}^2(\R_+; \C_-^m)$. The Volterra square-root process satisfies the exponential-affine transformation formula for the Fourier-Laplace transform
\begin{align}\label{eq: fourier transform}
 &\ \mathbb{E}\left[\mathrm{e}^{\langle u,X_{t}\rangle + \int_0^t \langle X_{t-s},f(s) \rangle ds}\right]
 \\ &= \exp\left\{ \langle u, x_{0}\rangle + \int_0^t \langle f(s),x_0\rangle ds + \int_0^t \langle x_0, R(\psi(s)) \rangle ds + \int_0^t \langle b, \psi(s) \rangle ds \right\}. \notag
\end{align}
Here and below we let, by slight abuse of notation, $\langle z,w \rangle:=\sum_{i=1}^m z_iw_i$ for $z,w\in\mathbb{C}^m$. Note that $\langle \cdot, \cdot \rangle$ does not correspond to the usual inner product on $\C^m$.

\subsection{Long-time behavior}

Mean-reversion is a commonly accepted stylized fact in stochastic volatility modeling. Mathematically, this feature may be captured by the notion of ergodicity for the volatility process, i.e., by limit distributions and stationarity.
Even more so stationarity also plays an important role for statistical inference.
Namely, if one makes discrete time observations, then stationarity guarantees that these samples can be drawn from the same invariant distribution. Hence one may  estimate the parameters through the invariant distribution, see e.g. \cite{MR3526353} where this was done for the subcritical Heston model. For other related results and implications for applications we refer to \cite{MR2599675, MR3216637, MR3035268}. 

It is well-known that the classical CIR process (that is, Example \ref{example: CIR} with $H = 1/2$) is mean-reverting with long-term mean $-b \beta^{-1}$ and speed of mean-reversion $\beta$. Mathematically this can be justified by studying ergodicity of the process. More precisely, if $\beta < 0$, the expected value satisfies $\E[X_t] \to b |\beta|^{-1}$ as $t \to \infty$ and the process has a unique limiting distribution $\pi$ which is stationary, see also \cite{2018arXiv181205402J, 2019arXiv190105815F} where ergodicity of more general affine processes on the canonical state space were studied.
Convergence results in the stronger total variation distance and hence the law-of-large numbers have been studied in \cite{MR4102262, FJKR22} for general classes of affine processes.

In this work we provide a sufficient condition for the existence of limiting distributions of the Volterra square-root process. Moreover, we characterize all limiting distributions and show that each limiting distribution gives rise to a stationary process, hence showing that the process is indeed mean-reverting. In contrast to the classical CIR process, the limiting distributions of the Volterra square-root process may  depend on the initial state $x_0$ (even if $m=1$, and $\beta$ is negative or $b = 0$). We also characterize the case where the limiting distribution  $\pi_{x_0}$ actually depends on the initial state $x_0$, and prove that all limiting distributions satisfy
$\pi_{x_0} = \pi_{Px_0} = \pi_0 \ast \pi_{Px_0}^{b=0}$.
Here $P$ is a certain projection operator, and $\pi_{Px_0}^{b=0}$ denotes the limiting distribution of the Volterra square-root process with initial state $Px_0$ and $b = 0$. Our proof is essentially based on an analysis of the Riccati-Volterra equation \eqref{eq: riccati} reformulated for the Laplace transform. The latter one is more suitable for the cone structure of $\R_+^m$, see Theorem \ref{Theorem: limiting distribution} and Theorem \ref{Theorem: limiting distribution characterisation}.

For the existence of limiting distributions it suffices to show that the limit $t \to \infty$ in \eqref{eq: fourier transform} exists and is continuous at $u = 0$ and then to apply L\'evy's continuity theorem as done in \cite{2018arXiv181205402J}. This requires to show that $\psi$ obtained from \eqref{eq: riccati} has additionally global integrability in time (e.g., $\psi \in L^1(\R_+; \C^{m}) \cap L^2(\R_+; \C^{m})$), which is studied in Section 3. However, to prove the existence of an associated stationary process we cannot rely (in contrast to the literature) on the use of the Feller semigroup and classical Markovian techniques. In this work we propose an alternative approach based on an extension of the exponential-affine transformation formula. Namely, we prove in Section 3 that formulas \eqref{eq: riccati} and \eqref{eq: fourier transform} can be extended from $(u,f)$ to a general class of vector-valued measures $\mu$. This allows us to effectively track the finite dimensional distributions of the process. Hence, we can show that as $h\to \infty$, the process $(X^h_t)_{t\ge0}$ defined by $X^h_t = X_{t+h}$ converges in law to a continuous process $X^{\mathrm{stat}}$ which is the desired stationary process. Let us mention that this approach seems to be  new even for classical affine processes.

At this point it is worthwhile to mention that some of the Markovian methods can be recovered once the process is lifted to an infinite dimensional Markov process described by a generalized Feller semigroup in the sense of \cite{MR4181950}. Such an approach was recently used in \cite{JPS22} to tacke a similar problem to the one studied in this work. There the authors study a class of continuous affine Volterra processes on the canonical state-space $D = \R^n \times \R_+^m$ where the first $n$-components correspond to the $\log$-asset prices while the last $m$-components are the stochastic volatility factors. In this framework, the authors prove under certain subcriticallity conditions on the drift and for fractional kernels of type $K(t) = t^{\alpha-1}/\Gamma(\alpha)$ with $\alpha \in (0,1)$ the existence of a stationary process for the Markovian lift. Uniqueness and the characterization of stationary processes is left open. Finally, applying their result to our setting, that is $n = 0$, we see that the volatility factors are essentially one-dimensional rough CIR processes as given in Example \ref{example: CIR} with the additional restriction $b = 0$. Their key method is based on the observation that uniform boundedness of the first moment translates to uniform boundedness of the operator norm of the generalized Feller semigroup which itself is shown to be sufficient for the existence of an invariant measure. In contrast, our methods are based on a detailed study of the Riccati-Volterra equation and allow us to study the multi-dimensional case for a general class of Volterra kernels $K$, allowing for $b \neq 0$, and also prove the uniqueness of stationary processes by providing a characterisation of their finite dimensional distributions.

Our results are in line with the existing literature on limiting distributions for stochastic Volterra equations. In \cite{BDK19} the authors studied limiting distributions for L\'evy driven Volterra SPDEs. Using the Markovian lift onto the Filipovic space they have shown that for kernels $K$ being elements of the Filipovic space multiple limiting distributions may occur. At the same time, in \cite{MR4241464} an abstract SPDE framework was provided to deal with SPDEs having multiple limiting distributions. This framework covers the Markovian lift onto the Filipovic space as well as stochastic delay equations. While both works cover rather general classes of Volterra stochastic equations, they require that the kernel is sufficiently regular which excludes e.g. $K(t) = \frac{t^{H-1/2}}{\Gamma(H+1/2)}$ with $H \in (0,1)$ and hence cannot be applied to the Volterra square-root process.

\subsection{Regularity of the law}

In the second part of this work we turn to the study of regularity of the law of $X_t$ for fixed $t > 0$. This includes absolute continuity of the law as well as regularity of the density. In the case of classical affine processes, such results can lead to the strong Feller property of the process (see \cite{MR4130578}). We also want to point out that better regularity of the density are known for classical affine processes, see, e.g., (see \cite{MR3084047}), where the authors studied density approximations and their applications in mathematical finance, and obtained  $C^k$-regularity of the density up to the boundary. It is still unclear whether similar results can be obtained for affine Volterra processes.

In this work we prove that, when $\sigma_1, \dots, \sigma_m > 0$ and $K$ satisfies a suitable lower bound, the distribution of $X_t$ in \eqref{eq: VCIR} is absolutely continuous with respect to the Lebesgue measure on $\R_{++}^m := \R_+^m \backslash \partial \R_+^m$, the interior of its state space. Moreover, our proof shows that the density $p_t(x)$ satisfies $\min\{1, x_1^{1/2},\dots, x_m^{1/2}\}p_t(x) \in B_{1,\infty}^{\lambda}(\R^m)$, where $B_{1,\infty}^{\lambda}(\R^m)$ denotes the Besov space of order $(1,\infty)$ and some $\lambda \in (0,1)$ denoting the regularity of the function. In particular, by Sobolev embeddings, the density has some low $L^p(\R^m)$-regularity. Moreover, assuming that the H\"older increments of $K$ satisfy a global estimate in the spirit of condition (v) of the admissible parameters (see condition (K)), we also show that the limiting distributions $\pi_{x_0}$ are absolutely continuous with respect to the Lebesgue measure on $\R_{++}^m$ and
have the same regularity as the law of $X_t$ with $t > 0$. For a precise version of this results we refer to Section 6.

Our proof is based on a method that was first introduced in \cite{DF13} and subsequently applied in \cite{MR4130578,MR4127334} to continuous-state branching processes with immigration.
While the aforementioned works aimed to prove existence of a transition density for a Markovian SDE,
in this work we extend this method to Volterra stochastic equations and, additionally, demonstrate that this method
also can be applied for the limiting distributions.
Note that, our method here is different from existing methods to study densities of classical affine processes which are often based on estimates of the characteristic function, see \cite{MR3084047} and  \cite{FJKR22}.

\subsection{Application to  the Volterra CIR process with a Gamma kernel}

In this section we briefly state our results when applied to the \textit{Volterra Cox-Ingersoll-Ross  process with a Gamma kernel} obtained from
\begin{align}\label{eq: rough CIR}
  X_t &= x_0 + \int_0^t K(t-s) (b + \beta X_s) ds + \sigma \int_0^t K(t-s)\sqrt{X_s}dB_s,
\end{align}
where $K(t) = \frac{t^{H-\frac{1}{2}}}{\Gamma(H + \frac{1}{2})}e^{-\lambda t}$,
$H \in (0,1/2)$, $\lambda, \sigma, b, x_0 \geq 0$, $\beta \in \R$, and $(B_t)_{t \geq 0}$ is a one-dimensional Brownian motion. The following is our main result on limiting distributions and stationarity of the process.
\begin{Theorem}\label{Theorem: VCIR 1dim}
 Let $X$ be obtained from \eqref{eq: rough CIR} and $\beta < \lambda^{H+1/2}$.
 Then the process $(X_{t+h})_{t\ge0}$ converges in law to a continuous stationary process $(X_t^{\mathrm{stat}})_{t \geq 0}$ when $h \to \infty$. Moreover, the finite dimensional distributions of $X^{\mathrm{stat}}$ have the characteristic function
 \begin{align*}
    \E\left[ e^{ \sum_{j=1}^n \langle u_j,X^{\mathrm{stat}}_{t_j} \rangle } \right]
     &= \exp\left( \frac{\lambda^{H+1/2}x_0 + b}{\lambda^{H+1/2} - \beta} \left( \sum_{k=1}^n u_k + \frac{\sigma^2}{2}\int_0^{\infty}\psi(s)^2ds \right) \right),
 \end{align*}
 where $0 \leq t_1 < \dots < t_n$, $u_1,\dots, u_n \in \C_-$, and $\psi$ is the unique solution of
 \begin{align*}
  \psi(t) &= \sum_{j=1}^n \1_{\{ t >  t_n - t_j\}}K(t -(t_n - t_j))u_j
  \\ &\qquad \qquad + \int_0^{t}K(t-s)\left( \beta \psi(s) + \frac{\sigma^2}{2}\psi(s)^2\right)ds.
 \end{align*}
 Moreover, the first moment and the autocovariance function of the stationary process satisfy
 \[
  \E[X_t^{\mathrm{stat}}] = \frac{\lambda^{H + 1/2}x_0 + b}{\lambda^{H+1/2} - \beta}
 \]
 and if additionally $\beta < 0$ and $\sigma > 0$, then for $0 \leq s \leq t$,
 \begin{equation}\label{asym autocovariance}
  \mathrm{cov}(X_t^{\mathrm{stat}}, X_s^{\mathrm{stat}}) \,\asymp\, (t-s)^{-(H+3/2)}e^{-\lambda (t-s)}, \qquad t-s \to \infty.
 \end{equation}
\end{Theorem}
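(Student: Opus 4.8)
The plan is to obtain Theorem~\ref{Theorem: VCIR 1dim} as a special case of the general results of Sections~3--5 (Theorem~\ref{Theorem: limiting distribution}, Theorem~\ref{Theorem: limiting distribution characterisation}, and the extension of the exponential-affine transformation formula), so that the work reduces to checking their hypotheses for the Gamma kernel and then making the conclusions explicit. First I would verify admissibility of $(b,\beta,\sigma,K)$ with $K(t)=t^{H-1/2}e^{-\lambda t}/\Gamma(H+1/2)$: as a product of the completely monotone functions $t\mapsto t^{H-1/2}$ (here $H\in(0,1/2)$) and $t\mapsto e^{-\lambda t}$, the kernel $K$ is completely monotone and not identically zero, so condition~(vi) holds by the Remark following the admissibility conditions, while condition~(v) holds with $\gamma=2H$ exactly as in \cite[Example~2.3]{MR4019885}, the extra smooth bounded factor $e^{-\lambda t}$ causing no trouble. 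Next I would check the integrability of the relevant resolvent underlying Sections~3--5. Computing $\widehat K(z)=\int_0^\infty e^{-zt}K(t)\,dt=(z+\lambda)^{-(H+1/2)}$, the resolvent $E_\beta$ of the linear part of the Riccati--Volterra equation satisfies $\widehat{E_\beta}(z)=\widehat K(z)\big(1-\beta\widehat K(z)\big)^{-1}=\big((z+\lambda)^{H+1/2}-\beta\big)^{-1}$. The hypothesis $\beta<\lambda^{H+1/2}$ is precisely what makes $(z+\lambda)^{H+1/2}-\beta$ nonvanishing on $\{\mathrm{Re}(z)\ge0\}$: on the principal branch $w\mapsto w^{H+1/2}$ is real on $\{\mathrm{Re}(w)\ge\lambda\}$ only on the positive reals, where it is $\ge\lambda^{H+1/2}$. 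Moreover $E_\beta\ge0$ (a series of convolutions of the nonnegative $K$ when $\beta\ge0$; when $\beta<0$ one has $E_\beta(t)=e^{-\lambda t}t^{H-1/2}E_{H+1/2,H+1/2}(-|\beta|t^{H+1/2})$, nonnegative since $t^{\alpha-1}E_{\alpha,\alpha}(-ct^{\alpha})=-c^{-1}\frac{d}{dt}E_\alpha(-ct^{\alpha})\ge0$ for $\alpha\in(0,1)$, $c>0$, using that $E_\alpha(-ct^{\alpha})$ is nonincreasing), so $\int_0^\infty E_\beta(t)\,dt=\widehat{E_\beta}(0)=(\lambda^{H+1/2}-\beta)^{-1}<\infty$ gives $E_\beta\in L^1$, and $|\widehat{E_\beta}(iy)|\asymp|y|^{-(H+1/2)}$ as $|y|\to\infty$ with $H+1/2>1/2$ gives $E_\beta\in L^2$. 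Hence the general theorems apply.

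\textbf{Convergence, the finite dimensional distributions, and the mean.} By those theorems, $(X_{t+h})_{t\ge0}$ converges in law as $h\to\infty$ to a stationary process $X^{\mathrm{stat}}$ with continuous paths (continuity being part of the general construction), and the finite dimensional distributions of $X^{\mathrm{stat}}$ are the $h\to\infty$ limit of the extended exponential-affine formula. Specialising that formula to $m=1$, to the present kernel and to the point measure $\mu=\sum_{j=1}^n u_j\delta_{t_n-t_j}$ produces exactly the stated Riccati--Volterra equation for $\psi$ and an exponent equal to the $t\to\infty$ limit of the $x_0$- and $b$-dependent terms in \eqref{eq: fourier transform}, namely $x_0\sum_j u_j+x_0\int_0^\infty\big(\beta\psi(s)+\tfrac{\sigma^2}{2}\psi(s)^2\big)\,ds+b\int_0^\infty\psi(s)\,ds$. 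To collapse this into the clean form I would integrate the Riccati--Volterra equation for $\psi$ over $[0,\infty)$ (using $\int_0^\infty K(s)\,ds=\widehat K(0)=\lambda^{-(H+1/2)}$ and $\psi\in L^1$), which yields $\int_0^\infty\psi(s)\,ds=(\lambda^{H+1/2}-\beta)^{-1}\big(\sum_j u_j+\tfrac{\sigma^2}{2}\int_0^\infty\psi(s)^2\,ds\big)$; substituting back gives the exponent $c\big(\sum_k u_k+\tfrac{\sigma^2}{2}\int_0^\infty\psi(s)^2\,ds\big)$ with $c=(\lambda^{H+1/2}x_0+b)/(\lambda^{H+1/2}-\beta)$. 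Setting all but one $u_j$ to zero and differentiating at $0$ gives $\E[X^{\mathrm{stat}}_t]=c$; equivalently, from $m(t)=\E[X_t]$ solving $m(t)=x_0+b\int_0^t K(s)\,ds+\beta\int_0^t K(t-s)m(s)\,ds$ one gets $\widehat m(z)=\frac{x_0+b\widehat K(z)}{z(1-\beta\widehat K(z))}$, and the final-value theorem (valid since $1-\beta\widehat K(z)\neq0$ near $z=0$) gives $m(t)\to c$ as $t\to\infty$.

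\textbf{The autocovariance.} Assume additionally $\beta<0$ and $\sigma>0$. Subtracting its mean from \eqref{eq: rough CIR}, the centred process $Y_t=X_t-\E[X_t]$ solves $Y_t=\beta\int_0^t K(t-s)Y_s\,ds+\sigma\int_0^t K(t-s)\sqrt{X_s}\,dB_s$, hence $Y_t=\sigma\int_0^t E_\beta(t-u)\sqrt{X_u}\,dB_u$ by variation of constants, and It\^o's isometry gives $\mathrm{cov}(X_t,X_s)=\sigma^2\int_0^{s}E_\beta(t-u)E_\beta(s-u)\,\E[X_u]\,du$ for $0\le s\le t$. Taking $t=h+T+\tau$, $s=h+T$, substituting $r=h+T-u$, and letting $h\to\infty$ (using $\E[X_{h+T-r}]\to c$, $\sup_t\E[X_t]<\infty$, boundedness of $r\mapsto E_\beta(r+\tau)$ and $E_\beta\in L^1$ for dominated convergence, together with convergence in law and uniform integrability of second moments from $\sup_t\E[|X_t|^p]<\infty$) yields $\mathrm{cov}(X^{\mathrm{stat}}_{t+\tau},X^{\mathrm{stat}}_t)=\sigma^2 c\int_0^\infty E_\beta(r+\tau)E_\beta(r)\,dr$, independent of $t$. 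The asymptotics then follows from the tail of $E_\beta$: the classical expansion $E_{H+1/2,H+1/2}(-x)\sim-x^{-2}/\Gamma(-H-1/2)$ as $x\to\infty$ (the $x^{-1}$ term vanishing since $1/\Gamma(0)=0$) gives $E_\beta(t)\sim-\big(\beta^2\Gamma(-H-\tfrac12)\big)^{-1}t^{-(H+3/2)}e^{-\lambda t}$, with a strictly positive constant as $\Gamma(-H-\tfrac12)<0$; splitting $\int_0^\infty E_\beta(r+\tau)E_\beta(r)\,dr$ at $r=1$ and using, for large $\tau$, $(r+\tau)^{-(H+3/2)}\asymp\tau^{-(H+3/2)}$ on $[0,1]$ with $\int_0^1 E_\beta(r)e^{-\lambda r}\,dr\in(0,\infty)$ for the lower bound, and $\int_0^\infty E_\beta(r)e^{-\lambda r}\,dr=\widehat{E_\beta}(\lambda)<\infty$ for the upper bound, gives $\mathrm{cov}(X^{\mathrm{stat}}_t,X^{\mathrm{stat}}_s)\asymp(t-s)^{-(H+3/2)}e^{-\lambda(t-s)}$ as $t-s\to\infty$ (the implied constants being positive since $\sigma>0$, $c>0$, assuming $x_0+b>0$).

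\textbf{Main obstacle.} I expect the delicate point to be the sharp two-sided tail estimate of $E_\beta$ with the exact exponent $-(H+3/2)$: this requires the precise (not merely exponential) decay of $E_{H+1/2,H+1/2}(-|\beta|t^{H+1/2})$, equivalently a controlled inversion of the Laplace transform $\big((z+\lambda)^{H+1/2}-\beta\big)^{-1}$ obtained by deforming the Bromwich contour around the branch point at $z=-\lambda$ and estimating the remainder, and, crucially, the cancellation of the would-be leading term of order $t^{-(H+1/2)}e^{-\lambda t}$ — this cancellation occurs only when $\beta<0$ (for $0<\beta<\lambda^{H+1/2}$ a pole of $\widehat{E_\beta}$ appears strictly to the right of $-\lambda$, while for $\beta=0$ one has $E_\beta=K$ and no cancellation), and it is exactly what produces the faster rate $t^{-(H+3/2)}e^{-\lambda t}$ in \eqref{asym autocovariance}. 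Additional care is then needed to make the nonnegativity of $E_\beta$ and the passage to the stationary limit precise, so that the constants implicit in $\asymp$ are genuinely two-sided.
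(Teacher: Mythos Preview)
Your overall strategy matches the paper: verify the hypotheses of Theorem~\ref{Theorem: stationary process} for the Gamma kernel, then read off the finite dimensional distributions, the mean (Corollary following Theorem~\ref{Theorem: stationary process}), and the autocovariance asymptotics (the Example at the end of Section~5.3). Your verification of $E_\beta\in L^1\cap L^2$, your derivation of the stationary autocovariance via It\^o's isometry, and your Mittag--Leffler tail analysis are all essentially the paper's arguments.

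There is one genuine gap. To collapse the exponent into the form $c\bigl(\sum_k u_k+\tfrac{\sigma^2}{2}\int_0^\infty\psi^2\bigr)$ you integrate the Riccati--Volterra equation and invoke $\int_0^\infty K=\lambda^{-(H+1/2)}$. This is only valid for $\lambda>0$; when $\lambda=0$ the kernel is not integrable and your identity for $\int_0^\infty\psi$ is not available as written. The paper avoids this by using the \emph{second} representation \eqref{eq: FT convergence II} in Proposition~\ref{prop: convergence FT} (equivalently Theorem~\ref{Theorem: stationary process}(b)), whose prefactor is $A(\beta,x_0,b)=\lim_{t\to\infty}\E[X_t]$; this quantity is computed from $\int_0^\infty R_\beta$ and $\int_0^\infty E_\beta$, which are finite in both regimes, and specialises directly to $(\lambda^{H+1/2}x_0+b)/(\lambda^{H+1/2}-\beta)$. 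If you prefer to stay with your route, the fix is to rewrite the Riccati equation in resolvent form $\psi=E_\beta\ast\mu+\tfrac{\sigma^2}{2}E_\beta\ast\psi^2$ and integrate that instead, using $\int_0^\infty E_\beta=(\lambda^{H+1/2}-\beta)^{-1}$, which is finite for all $\lambda\ge0$ under the standing assumption $\beta<\lambda^{H+1/2}$; the same substitution then yields the claimed exponent uniformly in $\lambda$.

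A minor omission: to invoke Theorem~\ref{Theorem: stationary process} you also need condition~(K) and $[K]_{\eta,2,T}<\infty$ for the Gamma kernel. Both hold (the paper records them just before Proposition~\ref{prop: Hoelder global} and in the Example preceding Theorem~\ref{Theorem: sobolev regularity psi}), but they should be named explicitly in your hypothesis check.
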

 Here and after, if $f,g$ are positive functions, the notation $f \asymp g$ means  that  there is a positive constant $c$ such that $ c^{-1}g\leq f\leq cg$. As a consequence of our results, we see that the stationary process $X^{\mathrm{stat}}$ is independent of the initial state $x_0$ if and only if $\lambda = 0$. Moreover, since for $H=1/2$ the autocovariance function satisfies
$\mathrm{cov}(X_t^{\mathrm{stat}}, X_s^{\mathrm{stat}}) \,\asymp\, e^{-(\lambda + |\beta|)(t-s)}$ which can be seen by direct computation for the classical CIR process, we find that for $\lambda = 0$ the autocovariance function has a phase transition from power-law to exponential decay when $H \nearrow 1/2$.
Our result implies, in particular, that $X_t$ converges weakly to some limiting distribution $\pi_{x_0}$ when $t \to \infty$, and that its characteristic function is given by the expression in Theorem \ref{Theorem: VCIR 1dim} with $n = 1$ and $\psi$ being determined from \eqref{eq: riccati}.
Note that in contrast to the classical CIR case where $H = 1/2$ and $\lambda=0$, the limiting distribution may satisfy $\pi_{x_0} \neq \delta_0$ even when $b = 0$.
At this point we would also like to mention the recent work \cite{FGS22}, which provides a heuristic argument on the existence of limit distributions and the form of its Fourier-transform for the Volterra CIR process.

The following is our main result on the regularity of the law when applied to the \textit{Volterra CIR process with a Gamma kernel}.
\begin{Theorem}
 Let $X$ be the Volterra CIR process with a Gamma kernel obtained from \eqref{eq: rough CIR}. Suppose that $\sigma > 0$. Then the following assertions hold:
 \begin{enumerate}
     \item[(a)] There exists some non-negative function $  p_t \in L^1(\R_+)$ such that
     \[
      \mathcal{L}(X_t)(dx) = \P[X_t = 0]\delta_0(dx) + p_t(x)dx, \qquad \forall t > 0.
     \]
     Let $p_t^*(x) = \1_{\R_+}(x)\min\{1, \sqrt{x}\}p_t(x)$, then there exists  $\lambda \in (0,1)$ and another constant $C > 0$ independent of $t$ such that for $h \in [-1,1]$ and $t > 0$,
     \[
      \int_{0}^{\infty}|p_t^*(x+h) - p_t^*(x)|dx  \leq C|h|^{\lambda} (1 \wedge t)^{-H}.
     \]

     \item[(b)] Let $\pi_{x_0}$ be the limiting distribution of $X$. Then
     \[
      \pi_{x_0}(dx) = \pi_{x_0}(\{0\})\delta_0(dx) + \rho_{x_0}(x)dx
     \]
     for some $0 \leq \rho_{x_0} \in L^1(\R_+)$.
     Letting $\rho_{x_0}^*(x) = \1_{\R_+}(x)\min\{1,\sqrt{x}\}\rho_{x_0}(x)$, then
     \[
      \int_0^{\infty}|\rho_{x_0}^*(x+h) - \rho_{x_0}^*(x)|dx \leq C|h|^{\lambda}, \qquad h \in [-1,1],
     \]
     for the same $\lambda$ as in part (a) and some constant $C > 0$.
 \end{enumerate}
\end{Theorem}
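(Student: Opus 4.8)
\medskip
\noindent\textbf{Proof plan.} The plan is to deduce both parts from a single time‑uniform Besov‑type difference estimate, using the standard criterion that a finite measure $\nu$ on $\R_+$ is of the form $\rho(x)\,dx$ with $\rho\in B_{1,\infty}^{\lambda}(\R)$ as soon as $\sup_{0<|h|\le1}|h|^{-\lambda}\sup_{\|\psi\|_\infty\le1}\big|\int_{\R_+}(\psi(x-h)-\psi(x))\,\nu(dx)\big|<\infty$ (mollify $\nu$, note $\|\tau_h(\nu*\rho_\e)-\nu*\rho_\e\|_{L^1}\le\|\tau_h\nu-\nu\|_{TV}$ equals the displayed supremum, and pass to the limit by a Fréchet--Kolmogorov argument). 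Applying this to the weighted law $\nu_t(dx):=\min\{1,\sqrt x\}\,\mathcal{L}(X_t)(dx)$, whose atom at $0$ is annihilated by the weight so that $\nu_t=p_t^{*}\,dx$, part~(a) becomes the estimate
\begin{equation*}
\Big|\,\E\big[(\psi(X_t-h)-\psi(X_t))\min\{1,\sqrt{X_t}\}\big]\,\Big|\le C\,\|\psi\|_\infty\,|h|^{\lambda}\,(1\wedge t)^{-H},\qquad |h|\le1,\ t>0,\ \psi\in C_b(\R),
\tag{$\star$}
\end{equation*}
which yields $p_t^{*}\in B_{1,\infty}^{\lambda}$ with norm $\le C(1\wedge t)^{-H}$; dividing by $\min\{1,\sqrt x\}$ away from $0$ gives the stated decomposition of $\mathcal{L}(X_t)$ with $p_t\in L^1(\R_+)$. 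Part~(b) then follows from $(\star)$ at no extra cost: since $X_t\Rightarrow\pi_{x_0}$ by Theorem~\ref{Theorem: VCIR 1dim} and $x\mapsto(\psi(x-h)-\psi(x))\min\{1,\sqrt x\}$ is bounded continuous for $\psi\in C_b$, letting $t\to\infty$ (where $(1\wedge t)^{-H}=1$) transfers the bound to $\pi_{x_0}$ and gives $\rho_{x_0}^{*}\in B_{1,\infty}^{\lambda}$ with the same $\lambda$.

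To prove $(\star)$ -- this is the step where the method of \cite{DF13} is extended to stochastic Volterra equations -- fix $t,h,\psi$ and a cut‑off $\e\in(0,1\wedge t]$, to be optimised. On the terminal interval $[t-\e,t]$ write, for $r\in[t-\e,t]$, $X_r=\Xi_{t-\e}(r)+b\int_{t-\e}^rK(r-s)\,ds+\beta\int_{t-\e}^rK(r-s)X_s\,ds+\sigma\int_{t-\e}^rK(r-s)\sqrt{X_s}\,dB_s$, where $\Xi_{t-\e}(r):=x_0+\int_0^{t-\e}K(r-s)(b+\beta X_s)\,ds+\sigma\int_0^{t-\e}K(r-s)\sqrt{X_s}\,dB_s$ is $\F_{t-\e}$‑measurable, continuous in $r$, and satisfies $\Xi_{t-\e}(t-\e)=X_{t-\e}$. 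Evaluating at $r=t$ and freezing the diffusion coefficient at $X_{t-\e}$ isolates a genuinely Gaussian term,
\begin{equation*}
X_t=W_{\e}+\sigma\sqrt{X_{t-\e}}\;G_{\e},\qquad G_{\e}:=\int_{t-\e}^tK(t-s)\,dB_s\sim\mathcal{N}(0,\kappa_{\e}),\qquad \kappa_{\e}:=\int_0^{\e}K(r)^2\,dr\asymp\e^{2H},
\end{equation*}
with $G_{\e}$ independent of $\F_{t-\e}$ and $W_{\e}=\Xi_{t-\e}(t)+b\int_0^{\e}K(u)\,du+\beta\int_{t-\e}^tK(t-s)X_s\,ds+D_{\e}$ collecting, besides $\F_{t-\e}$‑measurable terms and a small drift, the \emph{coefficient‑discrepancy} term $D_{\e}:=\sigma\int_{t-\e}^tK(t-s)(\sqrt{X_s}-\sqrt{X_{t-\e}})\,dB_s$. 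The quantitative inputs are: admissibility condition (v) gives $\|\Xi_{t-\e}(t)-X_{t-\e}\|_{L^2}\lesssim\e^{H}$; the standard moment bound $\E|X_s-X_{t-\e}|^2\lesssim|s-t+\e|^{2H}$ gives $\|D_{\e}\|_{L^2}\lesssim\e^{3H/2}$ and $\E|X_t-X_{t-\e}|^{1/2}\lesssim\e^{H/2}$; and $\sup_t\E|X_t|^p<\infty$.

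The engine is a Gaussian‑smoothing estimate: if $Z=U+\sigma\sqrt y\,G$ with $y>0$ and $G\sim\mathcal{N}(0,\kappa)$ independent of $(U,y)$, then, writing $w(x)=\min\{1,\sqrt x\}$ and $q$ for the $\mathcal{N}(U,\sigma^2y\kappa)$‑density, $\big|\E[(\psi(Z-h)-\psi(Z))\,w(Z)\mid U,y]\big|\le\|\psi\|_\infty\,\|\tau_h(wq)-wq\|_{L^1}$, and the latter is bounded by $c\,\|\psi\|_\infty\big(\tfrac{|h|}{\sigma\sqrt{y\kappa}}\wedge1+\sqrt{|h|}\;\1_{\{y\lesssim|h|+\sigma\sqrt{y\kappa}\}}+\tfrac{|h|}{\sqrt y}\wedge1\big)$, the first term coming from the Gaussian shift and the rest from testing the increment $|w(\cdot+h)-w|\le\sqrt{|h|}\wedge\tfrac{|h|}{2\sqrt{\cdot}}$ against $q$. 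Applying this conditionally on $\F_{t-\e}$ on $\{X_{t-\e}>0\}$, replacing $w(X_t)$ by $w(X_{t-\e})$ at cost $\lesssim\|\psi\|_\infty\,\E|X_t-X_{t-\e}|^{1/2}\lesssim\|\psi\|_\infty\e^{H/2}$, bounding $\min\{1,\sqrt{X_{t-\e}}\}/\sqrt{X_{t-\e}}\le1$, and inserting uniform bounds on $\P(X_{t-\e}\le\delta)$ and on negative moments $\E[X_{t-\e}^{-\theta}]$ ($\theta>0$ small; these degenerate only like a power of $t$ in the small‑$t$ regime, which is exactly where $(1\wedge t)^{-H}$ is large), together with the lower‑order contribution of $\{X_{t-\e}=0\}$ controlled through $\E[\1_{\{X_{t-\e}=0\}}\sqrt{X_t}]\lesssim\e^{(H+1/2)/2}$, one arrives at $|\E[(\psi(X_t-h)-\psi(X_t))w(X_t)]|\lesssim\|\psi\|_\infty\big(|h|\,\kappa_{\e}^{-1/2}+|h|^{1/2}+\e^{\delta_0}\big)$ for some $\delta_0>0$ depending on $H$. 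Taking $\e=\min\{1\wedge t,\,|h|^{1/(2H)}\}$ and distinguishing the two regimes reproduces the right‑hand side of $(\star)$ for a suitable $\lambda\in(0,1)$.

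The main obstacle is the honest treatment of $D_{\e}$. In the Markovian case $H=1/2$, conditioning on $\F_{t-\e}$ makes the process restart from $X_{t-\e}$, so the splitting $X_t=W_{\e}+\sigma\sqrt{X_{t-\e}}G_{\e}$ with $G_{\e}$ independent of $W_{\e}$ is essentially exact; in the Volterra case $X|_{[t-\e,t]}$ instead carries the $\F_{t-\e}$‑measurable input curve $\Xi_{t-\e}$, and, more seriously, $D_{\e}$ is correlated with $G_{\e}$, so $W_{\e}$ and $G_{\e}$ are not independent. I would resolve this either by conditioning one scale earlier -- split $[t-\e,t]=[t-\e,t-\e/2]\cup[t-\e/2,t]$, condition on $\F_{t-\e/2}$, and use the $B$‑increment on $[t-\e/2,t]$, genuinely independent of $\F_{t-\e/2}$, as the Gaussian source while absorbing drift and discrepancy into the $\F_{t-\e/2}$‑measurable curve up to a remainder of order $\e^{3H/2}$ -- or by proving a perturbative Gaussian‑smoothing lemma: if $Z=U+\sigma\sqrt y\,G+D$ with $G\sim\mathcal{N}(0,1)$ independent of $(U,y)$ and $\E[D^2\mid U,y,G]$ small, then $\mathcal{L}(Z)$ still satisfies the Besov bound up to a loss polynomial in the size of $D$. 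This robustness statement is the Volterra analogue of the short‑time density estimate of \cite{DF13}; making it quantitative, and tracking how the boundary estimates (negative moments of $X_{t-\e}$, the event $\{X_{t-\e}=0\}$) interact with the choice of $\e$, is where most of the work lies and where the precise value of $\lambda$ is pinned down.
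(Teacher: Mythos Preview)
Your high-level strategy---extract a conditionally Gaussian piece from the noise on $[t-\e,t]$, use it for smoothing, then optimise $\e$ against $|h|$---is exactly the one the paper uses. But the ``main obstacle'' you flag (the correlation between $D_\e$ and $G_\e$) is self-inflicted, and the paper's resolution is considerably simpler than either of your proposed fixes.

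The key difference is that the paper, following \cite{DF13}, tests against H\"older functions $\phi\in C_b^{\eta}(\R)$ with small $\eta>0$ rather than merely bounded $\psi$. This permits replacing $X_t$ by an \emph{approximation} $X_t^{\e}$ in which the coefficients on $[t-\e,t]$ are frozen at their time-$(t-\e)$ values: in your notation, one simply discards $D_\e$ (and the unfrozen part of the drift) from $W_\e$. Then $X_t^{\e}=U_t^{\e}+V_t^{\e}$ with $U_t^{\e}$ genuinely $\F_{t-\e}$-measurable and $V_t^{\e}=\sigma\sqrt{X_{t-\e}}\,G_\e$ conditionally Gaussian given $\F_{t-\e}$---no correlation problem whatsoever. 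The price of this replacement is the term
\[
\E\bigl[|\Delta_h\phi(X_t)-\Delta_h\phi(X_t^{\e})|\bigr]\le 2\|\phi\|_{C_b^{\eta}}\,\E[|X_t-X_t^{\e}|^{\eta}]\lesssim\e^{3H\eta/2},
\]
which is precisely your $D_\e$-bound, but exploited through the H\"older regularity of the test function rather than through a perturbative density lemma. After this, the Gaussian term is dispatched by a single integration by parts against the conditional Gaussian density, contributing $|h|\e^{-H}$; crucially, the weight $\rho(X_{t-\e})=\min\{1,\sqrt{X_{t-\e}}\}$ in front exactly cancels the factor $X_{t-\e}^{-1/2}$ arising from the inverse standard deviation. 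Hence no negative moments $\E[X_{t-\e}^{-\theta}]$, no small-ball probabilities $\P(X_{t-\e}\le\delta)$, and no separate analysis of $\{X_{t-\e}=0\}$ are required---these ingredients, which you invoke, are not established in the paper and would be nontrivial to obtain for Volterra square-root processes. The resulting three-term estimate $|h|^{\eta}\e^{3H/4}+\e^{3H\eta/2}+|h|\e^{-H}$ is optimised over $\e$ and then fed into \cite[Lemma~2.1]{DF13}, which converts the $C_b^{\eta}$-dual bound directly into the $B_{1,\infty}^{\lambda}$-norm bound. For part~(b) one repeats the argument with the global H\"older estimate of Proposition~\ref{prop: Hoelder global} (valid for the Gamma kernel under the standing assumptions guaranteeing $\pi_{x_0}$ exists) so that all constants are uniform in $t$, and then passes to the limit $t\to\infty$ exactly as you outline.

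In short: your splitting is the right one, but rather than keeping $D_\e$ inside $W_\e$ and fighting the resulting dependence by two-scale conditioning or a perturbative smoothing lemma, throw $D_\e$ into the approximation error $X_t-X_t^{\e}$ and pay for it with the H\"older continuity of the test function.
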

As a consequence, apart from the origin the process $X$ has a density in the interior of the state space. The above is a special case of our results from Section 6 which are applicable to arbitrary dimensions and a large class of kernels $K$. One implication of this regularity is related with convergence in total variation to the limiting distribution. Indeed, noting that $p_t^* \longrightarrow \min\{1,\sqrt{x}\}\pi_{x_0}(x)$ weakly as $t \to \infty$, and that $(p_t^*)_{t \geq 1} \subset L^1(\R_+)$ is relatively compact due to $\sup_{t \geq 1}\| p_t^*\|_{B_{1,\infty}^{\lambda}} < \infty$, we conclude that
$p_t^* \longrightarrow \min\{1,\sqrt{x}\}\pi_{x_0}(x)$ in $L^1(\R_+)$ and hence in total variation.

These findings motivate us to study boundary non-attainment for the Volterra CIR process with a Gamma kernel, i.e., conditions for $\P[ X_t = 0, \ \ \forall t > 0] = 1$. In the classical case when $H = 1/2$ and $\lambda=0$, such problem is related with the Feller condition to be imposed on $\sigma$ and $b$. Its rough analogue is left for future research.

\subsection{Structure of the work}
This work is organized as follows. In Section 2 we collect some preliminaries for the study of Volterra integral equations. In Section 3 we study the Riccati-Volterra equation \eqref{eq: riccati} and establish some regularity results for its solution $\psi(t,u,f)$. In Section 4 we first prove global bounds for the moments of the Volterra square-root process. Moreover, we show that the H\"older increments are uniformly bounded in $L^2$. Based on the moment bounds, we prove in Section 5 the weak convergence of the law of $X_t$ to $\pi_{x_0}$ when $t \to \infty$, construct the stationary process, and finally provide a characterization when $\pi_{x_0}$ actually depends on $x_0$. In Section 6 we prove for a large class of kernels $K$ that the distribution of $X_t$ as well as $\pi_{x_0}$ is absolutely continuous with respect to the Lebesgue measure on $\R_{++}^m$.

\section{Preliminaries on Volterra integral equations}

\subsection{Convolution on $\R_+$}
For $p \in [1,\infty]$ we let $L^p([0,T]; \C^m)$ be the Banach space of equivalence classes of functions $f: [0,T] \longrightarrow \C^m$ with finite norm
$\| f\|_{L^p([0,T])}$. Similarly we define $L^p([0,T]; \C^{n \times k})$ with $n,k \in \N$ as the Banach space of matrix-valued functions $f: [0,T] \longrightarrow \C^{n \times k}$,
where $\C^{n \times k}$ is equipped with the operatornorm $\|A\|_{2} = \sup_{|v|=1}|Av|$ with respect to the Euclidean distances on $\C^n$ and $\C^k$.
We denote by $L_{loc}^p(\R_+; \C^m) = L_{loc}^p$ and $L_{loc}^p(\R_+; \C^{n \times k})$ the spaces of locally $p$-integrable functions.

The $m \times m$ identity matrix is denoted by $I_m$. Let $\|\cdot\|_{HS}$
be defined by $\|A\|_{HS}=\sqrt{\mathrm{tr}(A^{*}A)}$
be the Hilbert-Schmidt norm on $\C^{m\times m}$. Note that $\sqrt{m}^{-1}\| A\|_{HS} \leq \|A\|_2 \leq \|A\|_{HS}$ and both norms are sub-multiplicative in the sense that $\| AB \|_2 \leq \|A\|_2\|B\|_2$ and $\|AB\|_{HS} \leq \|A\|_{HS}\|B\|_{HS}$. Let $T > 0$.
The convolution of two functions $f: [0,T] \longrightarrow \C^{n \times k}, \ g: [0,T] \longrightarrow \C^{k \times d}$ is defined by
\[
 f\ast g(t) = \int_0^t f(t-s)g(s)ds, \qquad t \in [0,T]
\]
with the matrix multiplication under the integral. We frequently use Young's inequality which states that $\| f\ast g \|_{L^r([0,T])} \leq \| f\|_{L^p([0,T])}\| g\|_{L^q([0,T])}$ whenever for $p,q,r \in [1,\infty]$ with $\frac{1}{p} + \frac{1}{q} = \frac{1}{r} + 1$ the right-hand side is finite.
\begin{Remark}\label{rem: continuity}
 If $f \in L^p([0,T])$ and $g \in L^q([0,T])$ with $p,q \in [1,\infty]$ such that $\frac{1}{p}+\frac{1}{q}=1$, then $f\ast g$ is continuous. To see this, set $\tilde{f}(x)=f(x)\mathbf{1}_{[0,T]}(x)$, $\tilde{g}(x)=g(x)\mathbf{1}_{[0,T]}(x)$ and then apply \cite[2.20 Lemma]{MR1817225}.
\end{Remark}

We also use the convolution of a function and a measure. Namely, let $\mathcal{M}_{lf}$ be the space of all $\C^m$-valued set functions $\mu$ on $\R_+$ for which the restriction $\mu|_{[0,T]}$ with $T>0$ is a $\C^m$-valued finite measure.
For $\mu \in \mathcal{M}_{lf}$ and a compact set $E\subset \R_+$ recall that
\[
|\mu|(E):=\sup\bigg\{\sum_{j=1}^{N}|\mu(E_j)|: \, \{E_j\}_{j=1}^{N} \mbox{ is a measurable partition of } E\bigg\}.
\]
Given $f: (0,T] \longrightarrow \C^{m\times m}$
and $\mu \in \mathcal{M}_{lf}$, we define their convolution by
\[
 f \ast \mu(t) = \int_{[0,t)} f(t-s)\mu(ds), \qquad t \in (0,T]
\]
provided it is well-defined. It is easy to see that for each $p \in [1,\infty]$, $\| f \ast \mu\|_{L^p([0,T])} \leq \|f \|_{L^p([0,T])}|\mu|([0,T])$.
Moreover, if $f$ is continuous on $[0,T]$, then $f \ast \mu$ is also continuous on $[0,T]$.

\subsection{Volterra integral equations}

Let $K: \R_+ \longrightarrow \C^{m\times m}$ be locally integrable, i.e., $K \in L_{loc}^1(\R_+; \C^{m \times m})$. Consider for given $h \in L_{loc}^1(\R_+; \C^m)$ and $B \in \C^{m\times m}$ the Volterra convolution equation
\[
 x(t) = h(t) + \int_0^t K(t-s)B x(s)ds.
\]
Note that this equation is equivalent to
$x + K_{B} \ast x = h$,
where $K_{B}(t) = -K(t)B$.
According to \cite[Chapter 2, Theorem 3.5]{MR1050319}, it has a unique solution $x \in L_{loc}^1(\R_+; \C^m)$ given by $x = h - R_{B} \ast h$,
where $R_{B} \in L_{loc}^1(\R_+; \C^m)$ is the \textit{resolvent of the second kind} of the kernel $K_{B}$ defined by the relation
\begin{align}\label{eq: def Rbeta}
 R_{B} \ast K_{B} = K_{B} \ast R_{B} = K_{B} - R_{B}.
\end{align}
Note that $K_{B} \in L_{loc}^1(\R_+; \C^m)$ guarantees that such a function $R_{B}$  always exists, see
\cite[Chapter 2, Theorem 3.1]{MR1050319}. If the function $h$ is of the form $h = K \ast \mu$ with $\mu \in \mathcal{M}_{lf}$, then the unique solution takes the form $x = K \ast \mu - R_{B} \ast K \ast \mu = E_{B} \ast \mu$,
where we have set
\begin{align}\label{eq: def Ebeta}
 E_{B} = K-R_{B}\ast K.
\end{align}
Note that $E_{B}(-B) = R_{B}$.
\begin{Remark}\label{integrability of RB and EB}
 By \eqref{eq: def Rbeta}, \eqref{eq: def Ebeta}, and Young's inequality one has $R_{B}, E_{B} \in L_{\mathrm{loc}}^p(\R_+; \C^{m\times m})$, whenever $K_{B} \in L_{\mathrm{loc}}^p(\R_+; \C^{m \times m})$ with $p\ge1$. Moreover, if $K$ is continuous on $(0,\infty)$, then  $R_{B}$ and $E_{B}$ are also continuous on $(0,\infty)$.
\end{Remark}

\begin{Remark}
 If $K = I_m$, then $E_{B}(t) = e^{B t}$ and $R_{B}(t) = (-B)e^{B t}$.
 In this case integrability of $E_{B}$ on $\mathbb{R}_+$ is equivalent to $B$ having only eigenvalues with strictly negative real parts.
\end{Remark}

\section{Analysis of the Volterrra-Riccati equation}

\subsection{Regularity in time}

In the next lemma and its proof we will use the notation $R_{\beta^{\top}}$ and $E_{\beta^{\top}}$ which are respectively defined  by \eqref{eq: def Rbeta} and \eqref{eq: def Ebeta} with $B=\beta^\top$.
Using different methods, we will later on see that $R_{\beta} = (R_{\beta^{\top}})^{\top}$ and $E_{\beta} = (E_{\beta^{\top}})^{\top}$.

\begin{Lemma}\label{lemma: local integrability psi}
 Let $(b,\beta, \sigma, K)$ be admissible parameters and let $\psi$ be the unique solution of \eqref{eq: riccati} in $L_{loc}^2(\R_+; \C_-^{m})$, where $u \in \C_-^m$ and $f \in L_{loc}^1(\R_+; \C_-^{m})$. If $p \in [1,\infty]$ is such that $K \in L_{loc}^p(\R_+; \R^{m\times m})$, then for each $T > 0$,
 \begin{align*}
  &\ \| \psi(\cdot, u, f) \|_{L^p([0,T])} \leq 2\left( |u| + \| f\|_{L^1([0,T])} \right)\| E_{\beta^{\top}}\|_{L^p([0,T])}
  \\ &\qquad \qquad + \left( \sum_{i=1}^m \frac{\sigma_i^2}{2} \right)\left(|u| + \| f\|_{L^1([0,T])}\right)^2 \| E_{\beta^{\top}}\|_{L^p([0,T])} \| E_{\beta^{\top}}\|_{L^2([0,T])}^2.
 \end{align*}
 Moreover, if $\mathrm{Im}(u) = \mathrm{Im}(f) = 0$, then
 \[
  \| \psi(\cdot,u,f)\|_{L^p([0,T])} \leq \left(|u| + \| f\|_{L^1([0,T])} \right) \| E_{\beta^{\top}} \|_{L^p([0,T])}.
 \]
\end{Lemma}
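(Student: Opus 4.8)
The plan is to estimate $\psi$ directly from the Riccati--Volterra equation \eqref{eq: riccati} by treating the nonlinear term $R(\psi)$ as an inhomogeneity and applying the resolvent (variation of constants) formula. First I would rewrite \eqref{eq: riccati} by splitting $R(\psi(s)) = \beta^{\top}\psi(s) + N(\psi(s))$, where the linear part is $\langle u, \beta^i\rangle$ assembled into the matrix action $\beta^{\top}$ and the purely quadratic part is $N_i(\psi) = \tfrac{\sigma_i^2}{2}\psi_i^2$. Thus \eqref{eq: riccati} becomes
\begin{align*}
 \psi(t) = K(t)u + K\ast f(t) + K\ast N(\psi)(t) + \int_0^t K(t-s)\beta^{\top}\psi(s)\,ds,
\end{align*}
which is a linear Volterra equation in $\psi$ with forcing term $h(t) = K(t)u + K\ast f(t) + K\ast N(\psi)(t)$. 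Since $h = K\ast\mu$ for the measure $\mu(ds) = u\,\delta_0(ds) + f(s)\,ds + N(\psi(s))\,ds$, the representation from Section 2 (the paragraph around \eqref{eq: def Ebeta}) gives $\psi = E_{\beta^{\top}}\ast\mu$, i.e.
\begin{align*}
 \psi(t) = E_{\beta^{\top}}(t)u + E_{\beta^{\top}}\ast f(t) + E_{\beta^{\top}}\ast N(\psi)(t).
\end{align*}

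Next I would take $L^p([0,T])$ norms and apply Young's inequality term by term. The first two terms contribute $|u|\,\|E_{\beta^{\top}}\|_{L^p([0,T])} + \|E_{\beta^{\top}}\|_{L^p([0,T])}\|f\|_{L^1([0,T])} = (|u| + \|f\|_{L^1([0,T])})\|E_{\beta^{\top}}\|_{L^p([0,T])}$. For the quadratic term, $\|E_{\beta^{\top}}\ast N(\psi)\|_{L^p([0,T])} \le \|E_{\beta^{\top}}\|_{L^p([0,T])}\|N(\psi)\|_{L^1([0,T])}$ and $\|N(\psi)\|_{L^1([0,T])} \le (\sum_i \sigma_i^2/2)\|\psi\|_{L^2([0,T])}^2$. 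This already yields the stated estimate \emph{provided} we first control $\|\psi\|_{L^2([0,T])}$, so the scheme is: (1) obtain an a priori $L^2$ bound on $\psi$; (2) feed it into the $L^p$ estimate above. For step (1), with $p=2$ the same inequality reads $\|\psi\|_{L^2} \le (|u|+\|f\|_{L^1})\|E_{\beta^{\top}}\|_{L^2} + (\sum_i\sigma_i^2/2)\|E_{\beta^{\top}}\|_{L^2}\|\psi\|_{L^2}^2$. To turn this into an explicit bound I would use the sign structure: since $u\in\C_-^m$, $f$ is $\C_-^m$-valued, and $\psi$ is $\C_-^m$-valued (guaranteed by \cite[Theorem 6.1]{MR4019885}), one checks $\mathrm{Re}\langle w, N(\psi)\rangle \le 0$ whenever... — more simply, in the real case $\mathrm{Im}(u)=\mathrm{Im}(f)=0$ one has $\psi\le 0$ componentwise and the nonincreasing/sign hypotheses (vi) make $E_{\beta^{\top}}\ast N(\psi)$ have a favourable sign, so that $|\psi| \le |E_{\beta^{\top}}\ast(u\,\delta_0 + f)|$ pointwise, directly giving the sharper second estimate $\|\psi\|_{L^p}\le(|u|+\|f\|_{L^1})\|E_{\beta^{\top}}\|_{L^p}$; for the general complex case one bootstraps from this real bound (valid for $\mathrm{Re}(u),\mathrm{Re}(f)$) to control $|\psi|$ by the solution of the real equation, obtaining $\|\psi\|_{L^2}\le 2(|u|+\|f\|_{L^1})\|E_{\beta^{\top}}\|_{L^2}$ on a time interval, then propagating along $[0,T]$.

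The main obstacle I expect is step (1): closing the a priori $L^2$ estimate for the \emph{complex} equation, where the quadratic term is genuinely nonlinear and Young's inequality alone only gives a quadratic-in-$\|\psi\|$ bound that need not be self-improving. The resolution should exploit that $\psi$ takes values in $\C_-^m$ together with the comparison/positivity properties in admissibility condition (vi) (nonnegativity and monotonicity of $K_i$ and of the resolvent of the first kind, which propagate to sign/monotonicity properties of $E_{\beta^{\top}}$ via the arguments of \cite{MR4019885}), allowing one to dominate $|\psi|$ by the solution of the associated scalar real Riccati--Volterra equation with data $|u|$ and $|f|$; for the latter the nonlinearity has a good sign and the bound is linear. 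Once that domination is in place, the constant $2$ in the statement simply absorbs the passage from the real comparison solution to the complex $\psi$, and the final $L^p$ bound follows by one more application of Young's inequality as sketched above. The case $\mathrm{Im}(u)=\mathrm{Im}(f)=0$ is then immediate since there the comparison is an equality and no factor $2$ is needed.
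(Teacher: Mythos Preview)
Your variation-of-constants representation $\psi = E_{\beta^{\top}}u + E_{\beta^{\top}}\ast f + E_{\beta^{\top}}\ast N(\psi)$ and the Young-inequality bookkeeping are correct, and for the real case $\mathrm{Im}(u)=\mathrm{Im}(f)=0$ your sign argument essentially reproduces the paper's bound. The genuine gap is exactly where you flag it: closing the a~priori $L^2$ estimate for complex data. Your proposed resolution---dominating $|\psi|$ by the solution of a real Riccati--Volterra equation with data $|u|,|f|$---does not work as stated. With nonnegative data the equation is outside the $\C_-^m$ theory, and more importantly the quadratic term $N_i(\psi)=\tfrac{\sigma_i^2}{2}\psi_i^2$ couples real and imaginary parts in a way that no single real comparison equation captures: the imaginary part feeds into the real part through $\psi_i^2$, so any bound that treats $N(\psi)$ as a black-box inhomogeneity leads back to the quadratic-in-$\|\psi\|_{L^2}$ inequality you wrote down, which is not self-closing.

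The paper's route avoids this precisely by \emph{decoupling} the real and imaginary parts. From \cite[Lemma 6.3]{MR4019885} one has componentwise $\ell_i \le \mathrm{Re}(\psi_i)\le 0$ and $|\mathrm{Im}(\psi_i)|\le h_i$, where $h$ solves the \emph{purely linear} Volterra system $h_i = K_i|\mathrm{Im}(u_i)| + K_i\ast|\mathrm{Im}(f_i)| + K_i\ast\langle h,\beta^i\rangle$, while $\ell$ solves a linear system whose only quadratic input is $-\tfrac{\sigma_i^2}{2}h_i^2$. The point is that $h$ is bounded directly by $(|u|+\|f\|_{L^1})\|E_{\beta^{\top}}\|_{L^p}$ via Young's inequality with no feedback; then $\|\varkappa\|_{L^1}$ with $\varkappa_i=\tfrac{\sigma_i^2}{2}h_i^2$ is controlled by $(\sum_i\sigma_i^2/2)\|h\|_{L^2}^2$, which is already bounded. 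This is what produces the explicit constant $2$ (one copy of the linear bound from $h$, one from $\ell$) and the factor $\|E_{\beta^{\top}}\|_{L^2}^2$ in the quadratic correction. Your scheme recovers the same final inequality \emph{once} an $L^2$ bound on $\psi$ is available, but the missing ingredient is this real/imaginary splitting that linearises the $h$-equation; without it there is no way to escape the circular estimate.
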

\begin{proof}
 It follows from the proof of \cite[Lemma 6.3]{MR4019885} that the real and imaginary parts of $\psi$ satisfy for each $i = 1,\dots, m$ the inequalities
 $\ell_i(t) \leq \mathrm{Re}(\psi_i(t,u,f)) \leq 0$ and $|\mathrm{Im}(\psi_i(t,u,f))| \leq h_i(t)$,
where the functions $\ell_i$ and $h_i$ are the unique global solutions of
\begin{align*}
    h_i(t) &= K_i(t)|\mathrm{Im}(u_i)| + \int_0^t K_i(t-s)\left( |\mathrm{Im}(f_i(s))| +  \langle h(s), \beta^i \rangle \right) ds
    \\ \ell_i(t) &= K_i(t)\mathrm{Re}(u_i) + \int_0^tK_i(t-s)\left( \mathrm{Re}(f_i(s)) + \langle \ell(s),\beta^i \rangle - \frac{\sigma_i^2}{2}h_i(s)^2\right) ds
\end{align*}
and $\beta^i = (\beta_{1i}, \dots, \beta_{mi})^{\top}$ denotes the $i$-th column of $\beta$. Hence we obtain
$\| \psi(\cdot, u,f)\|_{L^{p}([0,T])}
 \leq \| \ell \|_{L^{p}([0,T])} + \| h \|_{L^{p}([0,T])}$, where $\ell=(l_1,\ldots,l_m)^{\top}$ and $h=(h_1,\ldots,h_m)^{\top}$. To estimate the right-hand side of the previous inequality, we define for $i=1,\dots, m$
 \begin{align*}
    h_{0,i}(t) &= K_i(t)|\mathrm{Im}(u_i)| + \int_0^t K_i(t-s)|\mathrm{Im}(f_i(s))|ds,
    \\ \ell_{0,i}(t) &= K_i(t)\mathrm{Re}(u_i) + \int_0^t K_i(t-s)\mathrm{Re}(f_i(s))ds - \frac{\sigma_i^2}{2}\int_0^t K_i(t-s)h_i(s)^2ds.
 \end{align*}
 Then $h(t) = h_0(t) + \int_0^t K(t-s)\beta^{\top} h(s)ds$
and $\ell(t) = \ell_0(t) + \int_0^t K(t-s)\beta^{\top} \ell(s)ds$. This gives
$h(t) = h_0(t) - (R_{\beta^{\top}} \ast h_0)(t)$ and hence
\begin{align*}
 h(t) = E_{\beta^{\top}}(t)(|\mathrm{Im}(u_1)|,\ldots,|\mathrm{Im}(u_m)|)^{\top} + E_{\beta^{\top}} \ast (|\mathrm{Im}(f_1)|,\ldots,|\mathrm{Im}(f_m)|)^{\top}(t).
\end{align*}
Likewise we obtain $\ell(t) = \ell_0(t) - (R_{\beta^{\top}} \ast \ell_0)(t)$ and hence
\begin{align*}
    \ell(t) = E_{\beta^{\top}}(t)\mathrm{Re}(u) + E_{\beta^{\top}} \ast \left( \mathrm{Re}(f) - \varkappa \right)(t),
\end{align*}
where $\varkappa_i(t) = \frac{\sigma_i^2}{2}h_i(t)^2$.
Young's inequality yields
\[
 \| h \|_{L^p([0,T])} \leq \left( |u| + \|f\|_{L^1([0,T])}\right) \| E_{\beta^{\top}} \|_{L^p([0,T])}
\]
and
\[
 \| \ell \|_{L^p([0,T])} \leq \left( |u| + \| f\|_{L^1([0,T])} + \| \varkappa \|_{L^1([0,T])}\right)\|E_{\beta^{\top}}\|_{L^p([0,T])}.
\]
Estimating
\begin{align*}
 \| \varkappa \|_{L^1([0,T])} &\leq \bigg(\sum_{i=1}^{m}\frac{\sigma_i^2}{2}\bigg) \| h\|_{L^2([0,T])}^2
 \\ &\leq \left( \sum_{i=1}^{m}\frac{\sigma_i^2}{2} \right) \left( |u| + \|f\|_{L^1([0,T])}\right)^2 \| E_{\beta^{\top}} \|_{L^2([0,T])}^2
\end{align*}
 readily yields the first assertion.
 For the second assertion note that $\mathrm{Im}(u) = \mathrm{Im}(f) = 0$ implies $\mathrm{Im}(\psi) = h = 0$ and hence
 $\ell(t) = E_{\beta^{\top}}(t)u + E_{\beta^{\top}}\ast f(t)$.
 The second estimate is now a consequence of Young's inequality.
\end{proof}

The next result provides pointwise regularity of $\psi$ with respect to the time variable.
\begin{Theorem}\label{Theorem regularity 1}
 Let $(b,\beta, \sigma, K)$ be admissible parameters and let $\psi$ be the unique solution of \eqref{eq: riccati} in $L_{loc}^2(\R_+; \C_-^{m})$, where $u \in \C_-^m$ and $f \in L_{loc}^1(\R_+; \C_-^{m})$. Then the following assertions hold:
 \begin{enumerate}
     \item[(a)] If $f \in L_{loc}^{3/2}(\R_+; \C_-^m)$ and $K \in L_{loc}^3(\R_+; \R^{m\times m})$, then $\psi(\cdot,u,f)$ is continuous on $(0,\infty)$. If $u = 0$, hen $\psi$ is continuous on $[0,\infty)$.

     \item[(b)] If $f \in L_{loc}^{3/2}(\R_+; \C_-^m)$, $K \in L_{loc}^3(\R_+; \R^{m\times m})$, and there exists $\alpha \in (0,1)$ such that $K$ is locally $\alpha$-H\"older continuous and for each $T > 0$ we can find a constant $C(T) > 0$ satisfying for $i=1,\dots, m$
     \[
      \int_0^T |K_i(t+h) - K_i(t)|^3dt + \int_0^h |K_i(t)|^3 dt \leq C(T)h^{3\alpha}, \qquad h \in [0,1],
     \]
     then $\psi(\cdot, u,f)$ is locally $\alpha$-H\"older continuous on $(0,\infty)$.
 \end{enumerate}
\end{Theorem}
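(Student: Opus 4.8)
The plan is to base the proof on the resolvent representation of $\psi$ already exploited in the proof of Lemma~\ref{lemma: local integrability psi}. Writing $R(\psi)_i=\langle\psi,\beta^i\rangle+\frac{\sigma_i^2}{2}\psi_i^2=(\beta^{\top}\psi)_i+\nu_i$ with $\nu_i(s):=\frac{\sigma_i^2}{2}\psi_i(s)^2$, equation~\eqref{eq: riccati} becomes the Volterra equation $\psi=Ku+K\ast f+K\ast\beta^{\top}\psi+K\ast\nu$, which is linear in $\psi$ with kernel $K_{\beta^{\top}}(t)=-K(t)\beta^{\top}$; solving it by means of the resolvent of the second kind yields
\[
 \psi(t)=E_{\beta^{\top}}(t)u+(E_{\beta^{\top}}\ast f)(t)+(E_{\beta^{\top}}\ast\nu)(t),\qquad t\ge0.
\]
By Lemma~\ref{lemma: local integrability psi} with $p=3$ one has $\psi\in L_{loc}^3$ whenever $K\in L_{loc}^3$, hence $\nu\in L_{loc}^{3/2}$, while $E_{\beta^{\top}}\in L_{loc}^3$ by Remark~\ref{integrability of RB and EB}. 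Thus the problem reduces to controlling the regularity of $E_{\beta^{\top}}(\cdot)u$ and of the convolutions of $E_{\beta^{\top}}$ with $f$ and with $\nu$, of which only their membership in $L_{loc}^{3/2}$ will be used.

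For part~(a), $K$ is continuous on $(0,\infty)$ by admissibility (condition~(vi) with $h=0$), so Remark~\ref{integrability of RB and EB} makes $E_{\beta^{\top}}$ continuous on $(0,\infty)$, whence $t\mapsto E_{\beta^{\top}}(t)u$ is continuous on $(0,\infty)$ and vanishes identically if $u=0$. For the two convolution terms I would invoke Remark~\ref{rem: continuity} with the conjugate pair $(p,q)=(3,3/2)$: on every $[0,T]$ one has $E_{\beta^{\top}}\in L^3$ and $f,\nu\in L^{3/2}$, so $E_{\beta^{\top}}\ast f$ and $E_{\beta^{\top}}\ast\nu$ are continuous on $[0,T]$. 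Adding the three terms gives continuity of $\psi$ on $(0,\infty)$, and on all of $[0,\infty)$ when $u=0$.

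For part~(b) the plan is to upgrade each of these three statements to a H\"older estimate, the crux being to show that $E_{\beta^{\top}}$ inherits from $K$ both (i) local $\alpha$-H\"older continuity on $(0,\infty)$ and (ii) the integrated bounds
\[
 \int_0^h\|E_{\beta^{\top}}(t)\|^3\,dt+\int_0^T\|E_{\beta^{\top}}(t+h)-E_{\beta^{\top}}(t)\|^3\,dt\le C(T)h^{3\alpha},\qquad h\in[0,1],
\]
for each $T>0$. Given (i)--(ii), the term $E_{\beta^{\top}}(\cdot)u$ is locally $\alpha$-H\"older on $(0,\infty)$ by (i), and for $g\in\{f,\nu\}$ a change of variables in the convolution integrals gives, for $t+h\le T$,
\[
 (E_{\beta^{\top}}\ast g)(t+h)-(E_{\beta^{\top}}\ast g)(t)=\int_0^h E_{\beta^{\top}}(r)g(t+h-r)\,dr+\int_0^t\big(E_{\beta^{\top}}(r+h)-E_{\beta^{\top}}(r)\big)g(t-r)\,dr,
\]
so that H\"older's inequality with exponents $(3,3/2)$ applied to each integral, together with the first and second halves of (ii) respectively, bounds the increment by $C(T)\|g\|_{L^{3/2}([0,T+1])}h^{\alpha}$. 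Summing the three contributions then proves that $\psi$ is locally $\alpha$-H\"older on $(0,\infty)$ (and, if $u=0$, even $\alpha$-H\"older on every $[0,T]$).

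The hard part is (i)--(ii). I would first note that the hypotheses force $\alpha\le1/3$: since $K_i$ is nonincreasing with $K_i(0^+)>0$, one has $\int_0^h K_i(t)^3\,dt\ge h K_i(h)^3$, which stays bounded below by a positive multiple of $h$ as $h\to0$ and hence is compatible with the bound $C(T)h^{3\alpha}$ only if $3\alpha\le1$. Next, the relation $R_{\beta^{\top}}=-E_{\beta^{\top}}\beta^{\top}$ gives the fixed-point identity $E_{\beta^{\top}}=K+E_{\beta^{\top}}\ast(\beta^{\top}K)$. Expanding $E_{\beta^{\top}}(t+h)-E_{\beta^{\top}}(t)$ through this identity, the $K$-term is handled directly by the hypotheses on $K$, while the convolution term splits (after a change of variables) into a piece supported near the origin of $K$ — estimated using $\int_0^h\|K(t)\|^3\,dt\le C(T)h^{3\alpha}$ and the local boundedness of $E_{\beta^{\top}}$ away from $0$ — and a piece built from the translate difference $K(\cdot+h)-K(\cdot)$ — estimated using $\int_0^T\|K(t+h)-K(t)\|^3\,dt\le C(T)h^{3\alpha}$, Young's inequality, and the continuity on $[0,T]$ of $E_{\beta^{\top}}\ast K$ (Remark~\ref{rem: continuity}); the inequality $\alpha\le1/3$ is what lets one absorb the lower-order powers of $h$ that appear. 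The bound on $\int_0^h\|E_{\beta^{\top}}(t)\|^3\,dt$ follows at once from the same identity and the boundedness of $E_{\beta^{\top}}\ast(\beta^{\top}K)$ near $0$. This bookkeeping is delicate precisely because $K$ may blow up at the origin, and the computations run along the lines of the resolvent estimates in \cite{MR4019885}.
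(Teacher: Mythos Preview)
Your approach is correct but takes a genuine detour compared with the paper. The paper works directly with the defining equation $\psi = Ku + K\ast f + K\ast R(\psi)$: since $K\in L^3_{loc}$ and Lemma~\ref{lemma: local integrability psi} gives $\psi\in L^3_{loc}$, the full nonlinearity $R(\psi)$ lies in $L^{3/2}_{loc}$, so Remark~\ref{rem: continuity} yields continuity of $K\ast f$ and $K\ast R(\psi)$ immediately; for~(b) the increments $|\psi(t)-\psi(s)|$ are split into $I_1=|K(t)u-K(s)u|$ and convolution differences, and the assumed H\"older and $L^3$-increment bounds on $K$ are applied \emph{as given} via H\"older's inequality with exponents $(3,3/2)$. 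No intermediate object needs to be analyzed.

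Your route first passes to the resolved representation $\psi = E_{\beta^{\top}}u + E_{\beta^{\top}}\ast f + E_{\beta^{\top}}\ast\nu$ and then has to \emph{transfer} all the regularity hypotheses from $K$ to $E_{\beta^{\top}}$. This transfer is legitimate---it is the $L^3$-analogue of Lemma~\ref{Ebeta Rbeta regularity}, and your identity $E_{\beta^{\top}}=K+E_{\beta^{\top}}\ast(\beta^{\top}K)$ is exactly $E_{\beta^{\top}}=K-R_{\beta^{\top}}\ast K$ in disguise, so the argument is not circular---but it is genuine extra work that the direct approach avoids. The payoff of your decomposition (isolating the purely quadratic part $\nu$ rather than the full $R(\psi)$) is not used anywhere: once $\psi\in L^3_{loc}$ is known, $R(\psi)\in L^{3/2}_{loc}$ is no harder to obtain than $\nu\in L^{3/2}_{loc}$. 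Your observation that necessarily $\alpha\le 1/3$ is correct, but note that with a slightly sharper organization of the estimates (Young's inequality with $R_{\beta^{\top}}\in L^1_{loc}$ for the translate piece, and H\"older with $R_{\beta^{\top}}\in L^{3/2}_{loc}$ for the boundary piece) the transfer of the integrated bounds goes through without ever invoking it; in the paper's direct proof the constraint plays no role at all.
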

\begin{proof}
 (a) Let us first show that $\psi(\cdot, u,f)$ has a continuous representative on $(0,\infty)$. Using the representation \eqref{eq: riccati}, it suffices to show that each term on the right-hand side of \eqref{eq: riccati} is continuous on $(0,\infty)$. The first term $Ku$ is continuous by assumption. The convolution $K \ast f$ is continuous on $[0,\infty)$ by Remark \ref{rem: continuity} due to $K \in L_{loc}^3$ and $f \in L_{loc}^{3/2}$. Similarly, the continuity of the function $t \longmapsto \int_0^t K(t-s)R(\psi(s,u,f))ds$ follows from Remark \ref{rem: continuity} if we can show that $R(\psi(\cdot, u,f)) \in L_{loc}^{3/2}$.
 Since $|R(\psi)| \leq C\left(1 + |\psi|^2 \right)$ holds for some constant $C > 0$, the fact that $R(\psi(\cdot, u,f)) \in L_{loc}^{3/2}$ is a consequence of $K \in L_{loc}^3$ and Lemma \ref{lemma: local integrability psi} applied for $p = 3$. Hence $\psi(\cdot,u,f)$ has a continuous representative on $(0,\infty)$. If $u = 0$, then $\psi$ is continuous on $[0,\infty)$ by above reasoning and since $Ku = 0$.

 (b) Let $\psi$ be the continuous representative obtained in part (a). We show that this representative is locally $\alpha$-H\"older continuous. Fix $T > 0$ and let $s,t \in [0,T]$ be such that $0 \leq t-s \leq 1$. Then using \eqref{eq: riccati} we find that $|\psi(t,u,f) - \psi(s,u,f)| \leq I_1 + I_2 + I_3$ with
 \begin{align*}
  I_1 &= |K(t)u - K(s)u|
  \\ I_2 &= \left| \int_0^t K(t-r)f(r)dr - \int_0^s K(s-r)f(r)dr\right|
  \\ I_3 &= \left| \int_0^t K(t-r)R(\psi(r,u,f))dr - \int_0^s K(s-r)R(\psi(r,u,f))dr \right|.
 \end{align*}
 Here and below we let $C > 0$ be a generic constant which is independent of $s,t$.
 The first term $I_1$ can be estimated by the $\alpha$-H\"older continuity of $K$, i.e., $I_1 \leq C(t-s)^{\alpha}|u|$. For the second term we obtain
 \begin{align*}
     I_2 &\leq \int_0^s |(K(t-r) - K(s-r))f(r)| dr
     + \int_s^t |K(t-r)f(r)| dr
     \\ &\leq  \left( \int_0^s \|K(t-r) - K(s-r)\|_2^3 dr\right)^{1/3} \left( \int_0^s |f(r)|^{3/2} dr\right)^{2/3}
     \\ & \ \ \ + \left( \int_s^t \|K(t-r)\|_2^3 dr \right)^{1/3}\left( \int_0^s |f(r)|^{3/2} dr\right)^{2/3}
     \\ &\leq C (t-s)^{\alpha}.
 \end{align*}
 Finally, the last term $I_3$ can be treated in the same way as in the case of $I_2$, since $R(\psi(\cdot,u,f)) \in L_{loc}^{3/2}$ as already proved in part (a).
 Thus we have shown that
 $|\psi(t,u,f) - \psi(s,u,f)| \leq C(t-s)^{\alpha}$,
 which proves the assertion.
\end{proof}

\begin{Example}
 The kernel $K(t) = t^{H - 1/2}/\Gamma(H + 1/2)I_m$ with $H \in (0,1/2)$ belongs to $L_{loc}^p$ if and only if $p < \frac{1}{\frac{1}{2} - H}$. In particular, for $p = 3$ we obtain $H > 1/6$.
\end{Example}

For $H \in (0, 1/6)$ we will show that $\psi(\cdot,u,f)$ has at least some fractional Sobolev regularity. Indeed, we prove this for a general class of kernels $K$.

Given $p \geq 2$ and $\eta \in (0,1)$ let $W^{\eta,p}([0,T])$ be the Banach space of equivalence classes of functions $g: [0,T] \longrightarrow \C^m$ with finite norm
\[
 \| g \|_{W^{\eta,p}([0,T])} = \left( \int_0^T |g(t)|^p dt + \int_0^T \int_0^T \frac{|g(t) - g(s)|^p}{|t-s|^{1 + \eta p}}ds dt \right)^{1/p}.
\]
Finally, define
\[
 [K]_{\eta,p,T} = \left(\int_0^T t^{-\eta p}\|K(t)\|_2^p dt
 + \int_0^T\int_0^T \frac{\|K(t) - K(s)\|_2^p}{|t-s|^{1+\eta p}}dsdt \right)^{1/p}.
\]

\begin{Example}
 The kernel $K(t) = \frac{t^{H-1/2}}{\Gamma(H + 1/2)}e^{-\lambda t}I_m$ with $H \in (0,1/2)$ and $\lambda \geq 0$ satisfies $[K]_{\eta,p,T} < \infty$ for each $T > 0$, $p = 2$, and $\eta \in (0, H)$, see \cite{ACLP19}.
\end{Example}

The following is our second regularity result in $t$ for the solution of \eqref{eq: riccati}.
\begin{Theorem}\label{Theorem: sobolev regularity psi}
 Let $(b,\beta, \sigma, K)$ be admissible parameters and let $\psi$ be the unique solution of \eqref{eq: riccati} in $L_{loc}^2(\R_+; \C_-^{m})$, where $u \in \C_-^m$ and $f \in L_{loc}^1(\R_+; \C_-^{m})$.
 Suppose there exist $p \geq 2$, $\eta \in (0,1)$ and $T > 0$
 with $[K]_{\eta,p,T} < \infty$. Then
 \begin{align*}
  &\ \| \psi(\cdot, u,f) \|_{W^{\eta,p}([0,T])}
  \leq \| \psi(\cdot, u,f) \|_{L^p([0,T])}
  \\ &\qquad \qquad +  C(1 + [K]_{\eta,p, T})\left( 1 + |u| + \| f \|_{L^1([0,T])} +  \| \psi(\cdot,u,f) \|_{L^2([0,T])}^{2} \right),
 \end{align*}
 where the constant $C$ only depends on $T,p,m,\beta,\sigma$.
\end{Theorem}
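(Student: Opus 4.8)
The plan is to split the Sobolev--Slobodeckij norm as
$\|\psi(\cdot,u,f)\|_{W^{\eta,p}([0,T])} \le \|\psi(\cdot,u,f)\|_{L^p([0,T])} + [\psi(\cdot,u,f)]_{\eta,p,T}$, where $[g]_{\eta,p,T} := \big(\int_0^T\int_0^T |g(t)-g(s)|^p\,|t-s|^{-1-\eta p}\,ds\,dt\big)^{1/p}$ is the Gagliardo seminorm (here I use $(a^p+b^p)^{1/p}\le a+b$). Since the $L^p$--part already appears on the right-hand side of the claimed estimate, everything reduces to bounding $[\psi(\cdot,u,f)]_{\eta,p,T}$. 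Note first that $[K]_{\eta,p,T}<\infty$ forces $K\in L^p([0,T])$, because $\int_0^T\|K(t)\|_2^p\,dt\le T^{\eta p}\int_0^T t^{-\eta p}\|K(t)\|_2^p\,dt\le T^{\eta p}[K]_{\eta,p,T}^p$; hence $\psi\in L^p_{loc}$ by Lemma~\ref{lemma: local integrability psi}, so all quantities involved are finite. Inserting the Riccati--Volterra equation \eqref{eq: riccati} and using the triangle inequality for the seminorm I would write $[\psi(\cdot,u,f)]_{\eta,p,T}\le [Ku]_{\eta,p,T}+[K\ast f]_{\eta,p,T}+[K\ast R(\psi)]_{\eta,p,T}$. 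The first term is immediate: $|K(t)u-K(s)u|\le\|K(t)-K(s)\|_2|u|$ gives $[Ku]_{\eta,p,T}\le|u|\,[K]_{\eta,p,T}$ directly from the definition of $[K]_{\eta,p,T}$.

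The heart of the argument is the convolution estimate: for every $g\in L^1([0,T];\C^m)$,
$[K\ast g]_{\eta,p,T}\le C\,[K]_{\eta,p,T}\,\|g\|_{L^1([0,T])}$. To prove it, extend $K$ by zero to $\R$, call it $\widetilde K$, and write $(K\ast g)(t)-(K\ast g)(s)=\int_0^T(\widetilde K(t-r)-\widetilde K(s-r))g(r)\,dr$, so that the integrand is bounded by $\|\widetilde K(t-r)-\widetilde K(s-r)\|_2\,|g(r)|$. Applying Minkowski's integral inequality (valid since $p\ge2$) to pull the $dr$--integral out of the $L^p$--norm in $(s,t)$ taken against the measure $|t-s|^{-1-\eta p}\,ds\,dt$ yields $[K\ast g]_{\eta,p,T}\le\int_0^T|g(r)|\,\big(\int_0^T\int_0^T\|\widetilde K(t-r)-\widetilde K(s-r)\|_2^p\,|t-s|^{-1-\eta p}\,ds\,dt\big)^{1/p}\,dr$. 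After the substitution $t\mapsto t-r$, $s\mapsto s-r$ the inner double integral becomes the corresponding integral for $\widetilde K$ over a shifted window $[-r,T-r]^2$; splitting this window at the origin, the region where both arguments are positive is bounded by $[K]_{\eta,p,T}^p$, while the two mixed-sign regions — where the zero-extension of $K$ produces a jump — reduce, after integrating the singular weight in the negative variable, to $\int_0^T t^{-\eta p}\|K(t)\|_2^p\,dt\le[K]_{\eta,p,T}^p$. This is the step I expect to be the main obstacle: it is precisely what dictates the shape of $[K]_{\eta,p,T}$ (the weighted term $t^{-\eta p}\|K(t)\|_2^p$ is exactly what absorbs the corner contribution), and one must be a little careful with the Minkowski/Fubini exchange and the change of variables.

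With this lemma in hand I would apply it twice. For $g=f$ it gives $[K\ast f]_{\eta,p,T}\le C\,[K]_{\eta,p,T}\,\|f\|_{L^1([0,T])}$. For $g=R(\psi(\cdot,u,f))$, the explicit quadratic form of $R$ gives $|R(\psi)|\le C_\beta|\psi|+C_\sigma|\psi|^2$, hence $\|R(\psi(\cdot,u,f))\|_{L^1([0,T])}\le C_\beta\|\psi\|_{L^1([0,T])}+C_\sigma\|\psi\|_{L^2([0,T])}^2\le C\big(1+\|\psi(\cdot,u,f)\|_{L^2([0,T])}^2\big)$, using $\|\psi\|_{L^1([0,T])}\le T^{1/2}\|\psi\|_{L^2([0,T])}\le\tfrac12 T^{1/2}(1+\|\psi\|_{L^2([0,T])}^2)$. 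Adding the three contributions yields $[\psi(\cdot,u,f)]_{\eta,p,T}\le C(1+[K]_{\eta,p,T})\big(1+|u|+\|f\|_{L^1([0,T])}+\|\psi(\cdot,u,f)\|_{L^2([0,T])}^2\big)$, and combining with the trivial bound for the $L^p$--part gives the assertion, with $C$ depending only on $T,p,m,\beta,\sigma$ (for the fixed $\eta$).
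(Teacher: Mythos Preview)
Your argument is correct and the convolution lemma $[K\ast g]_{\eta,p,T}\le C[K]_{\eta,p,T}\|g\|_{L^1([0,T])}$ via Minkowski's integral inequality and the zero-extension of $K$ works as you describe: the positive--positive block of $[-r,T-r]^2$ is controlled by the Gagliardo part of $[K]_{\eta,p,T}$, and integrating the singular weight over the negative variable in the mixed blocks produces exactly $(\eta p)^{-1}\int_0^T t^{-\eta p}\|K(t)\|_2^p\,dt$, which is the other half of $[K]_{\eta,p,T}^p$. This is, however, a genuinely different route from the paper. There the difference $\psi(t)-\psi(s)$ is split into the same three pieces $I_1,I_2,I_3$, but for the convolution terms the paper does \emph{not} invoke Minkowski; instead it splits $\int_0^t = \int_0^s + \int_s^t$, applies the Jensen-type inequality $\big(\int h g\big)^p \le \big(\int g\big)^{p-1}\int h^p g$ with $g(r)=|R(\psi(r))|$ (respectively $|f(r)|$) as weight, and then interchanges the order of integration via Fubini to recover the two building blocks of $[K]_{\eta,p,T}$. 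Your approach packages everything into a single clean operator bound---convolution with an $L^1$-function preserves the kernel's fractional regularity---and applies it twice; the paper's approach avoids the zero-extension bookkeeping at the price of treating the near-diagonal and off-diagonal pieces separately and carrying the prefactor $\big(\int g\big)^{p-1}$ explicitly. Both yield the same final estimate with the same dependence of the constant (which, as in the paper, implicitly also depends on the fixed $\eta$ through the factor $(\eta p)^{-1}$).
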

\begin{proof}
 Here and below we let $C$ be a generic constant independent of $u$ and $f$. Let $I_1,I_2,I_3$ be the same as in the proof of Theorem \ref{Theorem regularity 1}.
 Then we obtain $|\psi(t,u,f) - \psi(s,u,f)|^p \leq C I_1^p+C I_2^p+C I_3^p$
 for $0 \leq s < t \leq T$.
 It is easy to see that
 \begin{equation} \label{esti I1}
   I_1^p \leq  \|K(t) - K(s)\|_2^p |u|^p.
 \end{equation}
 Let $g(r):=|R(\psi(r,u,f)|$. Since $|R(u)| \leq C(1 + |u|^2)$, it holds
 \begin{equation}\label{eq integral R psi}
  \int_0^T g(r)dr \leq C \left( 1  + \| \psi \|_{L^2([0,T])}^2 \right).
 \end{equation}
 So
 \begin{align*}
  I_3^p &\leq C \left(\int_s^t \| K(t-r)\|_2 g(r) dr \right)^p
  \\ &\ \ \ + C \left( \int_0^s \| K(t-r) - K(s-r)\|_2 g(r)dr \right)^p
  \\ &\leq
     C \left( \int_s^t g(r)dr \right)^{p-1} \left(\int_s^t \| K(t-r)\|_2^p g(r) dr \right)
  \\ &\ \ \ + C \left( \int_0^s g(r)dr \right)^{p-1}\left( \int_0^s \| K(t-r) - K(s-r)\|_2^p g(r)dr \right)
  \\ &\leq  C\left( 1 +  \| \psi \|_{L^2([0,T])}^{2p-2} \right)\left(\int_s^t \| K(t-r)\|_2^p g(r) dr \right)
  \\ & \ \ \ + C \left( 1 +  \| \psi \|_{L^2([0,T])}^{2p-2} \right)\left( \int_0^s \| K(t-r) - K(s-r)\|_2^p g(r)dr \right),
 \end{align*}
 where we have used (\ref{eq integral R psi}) and  the elementary inequality
  \begin{align*}
  \left(\int_a^b h(r)g(r)dr \right)^p \leq \left( \int_a^b g(r)dr \right)^{p-1} \int_a^b h(r)^p g(r)dr
 \end{align*}
 for $h \geq 0$ and $0 \leq a < b$. Thus we obtain
 \begin{align}
     &\ \int_0^T \int_0^T \frac{I_3^p}{|t-s|^{1 + \eta p}}dsdt \notag
     \\ &= 2\int_0^T \int_0^t \frac{I_3^p}{|t-s|^{1 + \eta p}}dsdt \notag
      \\ &\leq  C \left( 1 +  \| \psi \|_{L^2([0,T])}^{2p-2} \right)\int_0^T \int_0^t \int_s^t \frac{\|K(t-r)\|_2^p}{|t-s|^{1 + \eta p}}g(r)dr ds dt \notag
      \\ &\ \ \ + C \left( 1  +  \| \psi \|_{L^2([0,T])}^{2p-2} \right) \int_0^T \int_0^t \int_0^s \frac{\|K(t-r) - K(s-r)\|_2^p}{|t-s|^{1 + \eta p}}g(r)dr ds dt \notag
      \\ &\leq   C\left( 1  +  \| \psi \|_{L^2([0,T])}^{2p-2} \right)[K]_{\eta,p,T}\left( \int_0^T g(r) dr \right)\label{before eq last last}
      \\ &\leq C(1 + [K]_{\eta,p, T}^p)\left( 1  +  \| \psi \|_{L^2([0,T])}^{2p} \right), \label{eq last last}
 \end{align}
 where in \eqref{before eq last last} we have used Fubini's theorem to get
  \begin{align*}
     &\ \int_0^T \int_0^t \int_s^t \frac{\|K(t-r)\|_2^p}{|t-s|^{1 + \eta p}}g(r)dr ds dt \\
     &  =\int_0^T \int_r^T \int_0^r \frac{\|K(t-r)\|_2^p}{|t-s|^{1 + \eta p}}g(r) ds dtdr \\
     &  \le \frac{1}{\eta p}\int_0^T \int_r^T  \frac{\|K(t-r)\|_2^p}{|t-r|^{ \eta p}}g(r)  dtdr \\
     & \le (\eta p)^{-1} [K]_{\eta,p,T}\left( \int_0^T g(r) dr \right).
 \end{align*}
 Repeating the above arguments for $I_2$, we obtain
  \begin{align} \label{esti I2}
     \int_0^T \int_0^T \frac{I_2^p}{|t-s|^{1 + \eta p}}dsdt \leq  C(1 + [K]_{\eta,p, T}^p)\left( 1 +  \| f \|_{L^1([0,T])}^{p}  \right).
 \end{align}
Summarizing \eqref{esti I1}, \eqref{eq last last} and \eqref{esti I2},  we obtain
 \begin{align*}
     &\ \int_0^T \int_0^T \frac{|\psi(t,u,f) - \psi(s,u,f)|^p}{|t-s|^{1 + \eta p}}dsdt
      \\ &\leq C |u|^p  \int_0^T \int_0^T \frac{\|K(t) - K(s)\|_2^p}{|t-s|^{1 + \eta p}}ds dt
            \\ & \ \ \ + C(1 + [K]_{\eta,p, T}^p)\left( 1 +  \| f \|_{L^1([0,T])}^{p} +\| \psi \|_{L^2([0,T])}^{2p} \right)
      \\ &\leq C(1 + [K]_{\eta,p, T}^p)\left( 1 + |u|^p + \| f \|_{L^1([0,T])}^{p} +  \| \psi \|_{L^2([0,T])}^{2p} \right).
 \end{align*}
 In view of Lemma \ref{lemma: local integrability psi}, the assertion is proved.
\end{proof}

\subsection{Extension to measure convolutions}

Let $\mathcal{M}_{lf}^- \subset \mathcal{M}_{lf}$ be the subset of $\C^m$-valued set functions $\mu \in \mathcal{M}_{lf}$  which satisfy $\mathrm{Re}(\mu) \leq 0$.
Below we extend the affine formula from $(u,f) \in \C_-^m \times L_{loc}^1(\R_+; \C_-^m)$ to $(u,\mu) \in \C_-^m \times \mathcal{M}_{lf}^-$. The latter one provides us the key tool to explicitly identify the finite dimensional distributions of the stationary process via a Volterra Riccati-type equation (see Section 5).
\begin{Lemma}\label{lemma: admissible mu approximation}
 For each $\mu \in \mathcal{M}_{lf}^-$ there exists $(f_n)_{n \geq 1} \subset L_{loc}^1(\R_+; \C_-^m)$ such that
 \begin{enumerate}
     \item[(i)] $\| f_n\|_{L^1([0,T])} \leq |\mu|([0,T])$ for each $T > 0$;

     \item[(ii)] For each $T > 0$, $p \geq 1$ and $g \in L^p([0,T]; \C^m)$ one has $
      g^{\top} \ast f_n \to g^{\top} \ast \mu$ in $ \ L^p([0,T])$;
     \item[(iii)] For each $T > 0$ and each $g \in C([0,T]; \C^m)$ with $g(0)=0$ one has
     \[
      \lim_{n \to \infty}\int_0^t \langle g(t-s),f_n(s)\rangle ds = \int_{[0,t]} \langle g(t-s),\mu(ds)\rangle
     \]
     pointwise for each $t \in [0,T]$.
 \end{enumerate}
\end{Lemma}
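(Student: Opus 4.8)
The plan is to construct the approximating sequence $(f_n)$ by a time-shifted averaging (mollification) of the measure $\mu$, namely by setting $f_n(t) = n\,\mu\big([t, t + 1/n)\big) = n\int_{[t,t+1/n)}\mu(ds)$ for $t\ge 0$ (equivalently, $f_n = \mu \ast (n\mathbf{1}_{[0,1/n)})$ read as a "backward" average so that the convolution structure below comes out right; one may also simply take $f_n$ to be the density of the convolution of $\mu|_{[0,T+1]}$ with the approximate identity $n\mathbf{1}_{(-1/n,0]}$). First I would check that $f_n$ is well-defined, locally integrable, and $\C_-^m$-valued: since $\mathrm{Re}(\mu)\le 0$ as a signed measure, averaging preserves this, giving $\mathrm{Re}(f_n)\le 0$ a.e., so $f_n\in L_{loc}^1(\R_+;\C_-^m)$. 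For (i), by Tonelli/Fubini $\int_0^T|f_n(t)|\,dt \le n\int_0^T \int_{[t,t+1/n)}|\mu|(ds)\,dt = n\int_{[0,T+1/n)}\big|\{t\in[0,T]: s-1/n < t \le s\}\big|\,|\mu|(ds) \le |\mu|([0,T+1/n))$; a slight care with the interval endpoints (or restricting to $[0,T]$ and shrinking the averaging window accordingly) yields the clean bound $\|f_n\|_{L^1([0,T])}\le|\mu|([0,T])$ as stated — this is the one place where the precise form of the mollification matters, and I would choose it so that mass is never "borrowed" from beyond $[0,T]$.

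Next, for (ii), I would argue that $f_n\,dt \to \mu$ in the weak-$*$ sense of measures on every $[0,T]$ (a standard property of approximate identities: for continuous test functions this is immediate, and by the uniform bound in (i) together with density of continuous functions it extends), and then upgrade this to convergence of convolutions. The cleanest route: write $g^\top\ast f_n - g^\top\ast\mu$ and use that convolution with an approximate identity converges in $L^p$. Concretely, $g^\top\ast f_n(t) = (g^\top \ast \mu)\ast(n\mathbf 1_{[0,1/n)})(t) $ up to the endpoint subtlety, and since $h:= g^\top\ast\mu \in L^p([0,T])$ (by the bound $\|f\ast\mu\|_{L^p}\le\|f\|_{L^p}|\mu|$ recalled in Section 2, here with $f=g$) one has $h\ast(n\mathbf 1_{[0,1/n)}) \to h$ in $L^p([0,T])$ by continuity of translation in $L^p$. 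This gives (ii). For (iii), fix $t$ and a continuous $g$ with $g(0)=0$; the function $s\mapsto g(t-s)$ is continuous and bounded on $[0,t]$, so weak-$*$ convergence $f_n\,ds\to\mu$ on $[0,t]$ gives $\int_0^t\langle g(t-s),f_n(s)\rangle\,ds \to \int_{[0,t]}\langle g(t-s),\mu(ds)\rangle$ directly — here the hypothesis $g(0)=0$ is what removes any ambiguity at the right endpoint $s=t$ (mass of $\mu$ at $\{t\}$ is multiplied by $g(0)=0$), so the choice of half-open averaging interval is harmless.

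The main obstacle is bookkeeping at the endpoints of the averaging window: a naive mollification averages $\mu$ over $[t,t+1/n)$, which for $t$ near $T$ reaches outside $[0,T]$ and could violate the sharp bound in (i); and the convolution identity $g^\top\ast f_n = (g^\top\ast\mu)\ast(\text{kernel})$ only holds up to such boundary terms. I expect to handle this by either (a) defining $f_n$ from the restriction $\mu|_{[0,T]}$ on each fixed horizon and checking consistency, or better (b) using a shift of the form $f_n(t)=n\mu([t,t+1/n))$ but proving (i)–(iii) with the honest constant and noting the excess mass $|\mu|([T,T+1/n))\to 0$, absorbed into the error for (ii) and irrelevant for (iii) by the $g(0)=0$ assumption. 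Everything else is a routine application of the $L^p$-boundedness of measure convolution from Section 2, continuity of translation in $L^p$, and weak-$*$ convergence of approximate identities.
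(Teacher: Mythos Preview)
Your strategy---mollify $\mu$ by an approximate identity and use associativity of one-sided convolution---is exactly the paper's. The paper takes $\rho_n(t)=ne^{-nt}$ and sets $f_n(t)=\int_{[0,t]}\rho_n(t-s)\,\mu(ds)$, so that (i) is immediate from $\|\rho_n\|_{L^1(\R_+)}\le 1$, (ii) follows from the one-line identity $g_i\ast f_{n,i}=g_i\ast(\rho_n\ast\mu_i)=\rho_n\ast(g_i\ast\mu_i)\to g_i\ast\mu_i$ in $L^p$, and (iii) from dominated convergence after rewriting $\int_0^t g_i(t-s)f_{n,i}(s)\,ds=\int_{[0,t)}(g_i\ast\rho_n)(t-s)\,\mu_i(ds)$ (here $g(0)=0$ kills the atom at $s=t$).

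The only point to fix in your write-up is the \emph{direction} of the averaging. Your displayed choice $f_n(t)=n\,\mu([t,t+1/n))$ is a forward average; for it the associativity identity $g^\top\ast f_n=(g^\top\ast\mu)\ast(n\mathbf 1_{[0,1/n)})$ that you invoke in (ii) is \emph{false} (one-sided convolution on $\R_+$ requires the mollifier to act from the past, not the future), and moreover a forward average loses any atom $\mu(\{0\})$ entirely. You already sense this (``read as a backward average''); the cure is simply to commit to the one-sided convolution $f_n=\rho_n\ast\mu$ with $\rho_n=n\mathbf 1_{[0,1/n)}$ (or the paper's exponential), after which your argument and the paper's coincide and all three items go through without any endpoint bookkeeping.
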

\begin{proof}
 Let $\rho_n(t) = ne^{-n t}$, $t\ge 0$ and define $f_n(t) = \int_{[0,t]} \rho_n(t-s) \mu(ds)$. Then $\mathrm{Re}(f_n) \leq 0$ and $\| f_n \|_{L^1([0,T])} \leq |\mu|([0,T])$. Classical results from Fourier analysis (see \cite[2.16 Theorem]{MR1817225}) shows that
 $\rho_n \ast h \to h$ in $L^p$ if $h \in L^p$. The assertion (ii) now follows from
 $g_i \ast f_{n,i} = g_i \ast \rho_n \ast \mu_i = \rho_n \ast ( g_i \ast \mu_i )$, $i=1,\ldots,m$.
 Let's turn to (iii). First, for $t=0$, the desired convergence is true due to $g(0)=0$. Suppose $t>0$. We note that
 \begin{align*}
     \int_0^t  g_i(t-s) f_{n,i}(s)ds
     &= (g_i \ast \rho_n \ast \mu_i)(t)
     \\ &= \int_{[0,t]} (g_i \ast \rho_n) (t-s)\mu_i(ds)
     \\ &= \int_{[0,t)} g_i \ast \rho_n (t-s)\mu_i(ds),
 \end{align*}
 where we have used $(g_i \ast \rho_n)(0)\mu(\{t\}) = 0$. By dominated convergence,  the right-hand side of the last equality converges to $\int_{[0,t)} g_i  (t-s)\mu_i(ds)$, since $g_i \ast \rho_n (t-s) \to g_i(t-s)$ for each $s\in [0,t)$ as $n\to\infty$. Hence
 \begin{align*}
   \lim_{n\to\infty}  \int_0^t  g_i(t-s) f_{n,i}(s)ds = \int_{[0,t)} g_i  (t-s)\mu_i(ds) = \int_{[0,t]} g_i  (t-s)\mu_i(ds),
 \end{align*}
 which proves the assertion (iii).
\end{proof}

\begin{Remark}
 If $\mu$ is non-atomic, i.e., $\mu(\{t\}) = 0$ holds for each $t \geq 0$, then the additional assumption $g(0) = 0$ can be omitted.
\end{Remark}

The next result extends \eqref{eq: riccati} in the sense that one can replace $f$ by $\mu \in \mathcal{M}_{lf}^-$.
\begin{Theorem}\label{Theorem: riccati extension}
 Let $(b,\beta, \sigma, K)$ be admissible parameters and  suppose there exist $p \geq 2$ and $\eta \in (0,1)$ such that
 $[K]_{\eta,p,T} < \infty$  for each $T > 0$. Then the following assertions hold:
 \begin{enumerate}
     \item[(a)] For each $\mu \in \mathcal{M}_{lf}^-$ there exists a unique $L_{loc}^2(\R_+;\C_-^m)$-valued solution
 \begin{align}\label{eq: riccati extension}
  \psi(t,\mu) = \int_{[0,t)} K(t-s)\mu(ds) + \int_{0}^{t}K(t-s)R(\psi(s,\mu))ds,\quad t\ge 0.
 \end{align}

 \item[(b)] For each $q \in [1,p]$ this unique solution satisfies
  \begin{align*}
  &\ \| \psi(\cdot, \mu) \|_{L^q([0,T])} \leq 2|\mu|([0,T])
  \| E_{\beta^{\top}}\|_{L^q([0,T])}
  \\ &\qquad \qquad + \left( \sum_{i=1}^m \frac{\sigma_i^2}{2} \right)|\mu|([0,T])^2\| E_{\beta^{\top}}\|_{L^q([0,T])} \| E_{\beta^{\top}}\|_{L^2([0,T])}^2
 \end{align*}
 and if $\mathrm{Im}(\mu)= 0$, then even
 \[
  \| \psi(\cdot,\mu)\|_{L^q([0,T])} \leq |\mu|([0,T]) \| E_{\beta^{\top}} \|_{L^q([0,T])}.
 \]
 Finally, there exists a constant $C$ independent of $\mu$ such that
 \begin{align*}
 &\ \| \psi(\cdot, \mu)\|_{W^{\eta,p}([0,T])} \leq \|\psi(\cdot,\mu)\|_{L^p([0,T])}
  \\ &\qquad \qquad + C(1+ [K]_{\eta,p, T}) \left( 1 + |\mu|([0,T]) + \| \psi(\cdot,\mu)\|_{L^2([0,T])}^2 \right).
 \end{align*}

  \item[(c)] If $p \geq 3$, then the function $\psi(\cdot,\mu)$ is continuous at each $t_0 \geq 0$ for which the convolution
     $K \ast \mu (\cdot)$ is continuous at $t_0$.
 \end{enumerate}
\end{Theorem}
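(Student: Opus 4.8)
The plan is to obtain all three statements by approximating $\mu \in \mathcal{M}_{lf}^-$ via the mollified functions $f_n \in L_{loc}^1(\R_+;\C_-^m)$ produced in Lemma \ref{lemma: admissible mu approximation}, and then passing to the limit using the a priori bounds from Lemma \ref{lemma: local integrability psi} and Theorem \ref{Theorem: sobolev regularity psi}. For part (b), since $\|f_n\|_{L^1([0,T])} \leq |\mu|([0,T])$ (respectively $\mathrm{Im}(f_n)=0$ when $\mathrm{Im}(\mu)=0$, which one checks from the construction $f_n = \rho_n \ast \mu$), the solutions $\psi(\cdot,0,f_n)$ satisfy the bounds of Lemma \ref{lemma: local integrability psi} and Theorem \ref{Theorem: sobolev regularity psi} \emph{uniformly in $n$}, with $|u|$ replaced by $0$ and $\|f\|_{L^1}$ by $|\mu|([0,T])$. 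In particular $(\psi(\cdot,0,f_n))_{n\geq1}$ is bounded in $W^{\eta,p}([0,T])$, hence precompact in $L^q([0,T])$ for every $q \in [1,p]$ by the compact Sobolev embedding $W^{\eta,p}\hookrightarrow L^q$; so along a subsequence $\psi(\cdot,0,f_n) \to \psi$ in $L^q([0,T])$ and a.e.

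The next step is to identify the limit $\psi$ as a solution of \eqref{eq: riccati extension}. Writing \eqref{eq: riccati} for $f_n$, the first term $K \ast f_n$ converges to $K \ast \mu$ in $L^q([0,T])$ by Lemma \ref{lemma: admissible mu approximation}(ii) (applied componentwise, with $g = K_i$, using $[K]_{\eta,p,T}<\infty \Rightarrow K \in L^p_{loc}$). For the nonlinear term, since $|R(z)-R(w)| \leq C(1+|z|+|w|)|z-w|$ on $\C_-^m$ and the $\psi(\cdot,0,f_n)$ are $L^2$-bounded, one gets $R(\psi(\cdot,0,f_n)) \to R(\psi)$ in $L^{3/2}([0,T])$ (or $L^1$), whence $K \ast R(\psi(\cdot,0,f_n)) \to K \ast R(\psi)$ by Young's inequality; this requires $K \in L^3_{loc}$ or at least $L^2_{loc}$, which again follows from $[K]_{\eta,p,T}<\infty$ with $p\geq2$ — one may need to interpolate or use that $t^{-\eta p}\|K(t)\|^p$ integrable forces $K \in L^p_{loc}$ hence $K\in L^2_{loc}$, sufficient since $R(\psi)\in L^1$. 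Passing to the limit gives that $\psi$ solves \eqref{eq: riccati extension}, proving existence in part (a); the uniform bounds pass to the limit by lower semicontinuity of the norms, giving part (b). Uniqueness in (a) follows from a Gronwall/contraction argument on $[0,\delta]$ for small $\delta$: if $\psi_1,\psi_2$ both solve \eqref{eq: riccati extension}, their difference satisfies $\psi_1-\psi_2 = K \ast (R(\psi_1)-R(\psi_2))$, and using the local Lipschitz bound for $R$ together with the $L^2$-bounds and Young's inequality one closes the estimate on a short interval, then iterates.

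For part (c), fix $t_0 \geq 0$ with $K \ast \mu$ continuous at $t_0$. Write \eqref{eq: riccati extension} as $\psi(\cdot,\mu) = K\ast\mu + K \ast R(\psi(\cdot,\mu))$. The second summand $K \ast R(\psi(\cdot,\mu))$ is continuous on $[0,\infty)$: indeed $R(\psi(\cdot,\mu)) \in L^{3/2}_{loc}$ by part (b) applied with $q=3$ (valid since $p\geq3$), and $K \in L^3_{loc}$, so Remark \ref{rem: continuity} applies. Hence $\psi(\cdot,\mu)$ is continuous at $t_0$ as the sum of a function continuous at $t_0$ and a globally continuous function. The main obstacle I anticipate is the bookkeeping around which $L^p_{loc}$ membership of $K$ is actually available from the hypothesis $[K]_{\eta,p,T}<\infty$: one must verify that $p\geq2$ alone suffices for the $L^2$-level arguments in (a),(b) and that $p\geq3$ genuinely upgrades the convolution to continuity in (c) via Remark \ref{rem: continuity}; all other steps are routine approximation and Gronwall-type estimates.
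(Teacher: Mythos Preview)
Your proposal is correct and follows essentially the same route as the paper: approximate $\mu$ by the mollified $f_n$ from Lemma~\ref{lemma: admissible mu approximation}, use Lemma~\ref{lemma: local integrability psi} and Theorem~\ref{Theorem: sobolev regularity psi} to obtain uniform $W^{\eta,p}$-bounds, extract an $L^p$-convergent subsequence by compact embedding, pass to the limit in the equation, and recover the estimates of part (b) via Fatou/lower semicontinuity; part (c) is handled exactly as you describe. The only cosmetic difference is the uniqueness step in (a): the paper dispatches it by citing \cite[Theorem~B.1]{MR4019885} (unique maximal solution, which must be global since a global solution has just been exhibited), whereas your short-interval Gronwall iteration would work equally well.
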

\begin{proof}
 Let $(f_n)_{n \geq 1} \subset L_{loc}^1(\R_+; \C_-^m)$ be a sequence of functions as given in Lemma \ref{lemma: admissible mu approximation}. Let $\psi_n = \psi(\cdot, 0, f_n)$ be the sequence of unique solutions of \eqref{eq: riccati}. Fix $T > 0$. Then using Lemma \ref{lemma: local integrability psi} and Lemma \ref{lemma: admissible mu approximation}.(i) we obtain for each $q \in [1,p]$,
  \begin{align*}
   \| \psi(\cdot, 0, f_n) \|_{L^q([0,T])} &\leq 2|\mu|([0,T])
  \| E_{\beta^{\top}}\|_{L^q([0,T])}
  \\ & \ \ \ + \left( \sum_{i=1}^m \frac{\sigma_i^2}{2} \right)|\mu|([0,T])^2\| E_{\beta^{\top}}\|_{L^q([0,T])} \| E_{\beta^{\top}}\|_{L^2([0,T])}^2,
 \end{align*}
 and if $\mathrm{Im}(\mu) = 0$, then
 \[
  \| \psi(\cdot,0, f_n)\|_{L^q([0,T])} \leq |\mu|([0,T]) \| E_{\beta^{\top}} \|_{L^q([0,T])}.
 \]
 Hence Theorem \ref{Theorem: sobolev regularity psi} combined with Lemma \ref{lemma: admissible mu approximation}.(i) implies that
 \begin{align*}
  &\ \| \psi(\cdot, 0,f_n)\|_{W^{\eta,p}([0,T])} \leq \|\psi(\cdot,0,f_n)\|_{L^p([0,T])}
  \\ &\qquad \qquad + C(1+ [K]_{\eta,p, T}) \left( 1 + |\mu|([0,T]) + \| \psi(\cdot,0,f_n)\|_{L^2([0,T])}^2 \right).
 \end{align*}
 In view of the $L^q$-estimates on $\psi(\cdot,0,f_n)$ and Remark \ref{integrability of RB and EB}, the right-hand side is bounded in $n$.
 Since the ball $\{g \in L^p([0,T]) \, : \, \|g \|_{W^{\eta,p}([0,T])} \leq R \}$ with $R > 0$ are relatively compact in $L^p([0,T]; \C^m)$ (see \cite[Theorem 2.1]{MR1339739}), we find a subsequence $(f_{n_k})_{k \geq 1}$ such that $\psi(\cdot, 0,f_{n_k}) \longrightarrow \psi$ in $L^p([0,T];\C^m)$. Further, we can choose a subsubsequence, still denoted  by $(f_{n_k})$, such that $\psi(\cdot, 0,f_{n_k})$ converges almost surely to $\psi$ on $[0,T]$.
 Taking the limit $k \to \infty$ and using the Lemma of Fatou proves the estimates from part (b).

 Next we show that $\psi = \psi(\cdot, \mu)$ is a solution of \eqref{eq: riccati extension} on $[0,T]$. Since $\psi_{n_k} \to \psi$ and $K\ast f_{n_k} \longrightarrow K\ast \mu$ in $L^p([0,T]; \C^m)$, it suffices to show that $K \ast R(\psi_{n_k})
  \longrightarrow K \ast R(\psi)$ holds in $L^p([0,T]; \C^m)$.  For this purpose we first use Young's inequality, then
 \begin{equation}\label{esti: R}
   |R(u) - R(v)| \leq C(1 + |v| + |u|)|u-v|,
 \end{equation}
 and finally the Cauchy-Schwartz inequality to find that
 \begin{align*}
  &\ \| K\ast R(\psi_{n_k}) - K \ast R(\psi) \|_{L^p([0,T])}
   \\ &\leq C \| K \|_{L^p([0,T])} \int_0^T (1 + |\psi_{n_k}(s)| + |\psi(s)|)|\psi_{n_k}(s) - \psi(s)|ds
   \\ &\leq C \| K \|_{L^p([0,T])} \left( 1 + \| \psi_{n_k}\|_{L^2([0,T])} + \|\psi\|_{L^2([0,T])} \right) \|\psi_{n_k} - \psi \|_{L^2([0,T])}.
 \end{align*}
 Since the right-hand side converges to zero, we find that
 $\psi$ is a global solution of \eqref{eq: riccati extension}. Noting that \eqref{esti: R} holds and that $K \ast \mu \in L_{loc}^2(\R_+; \C^m)$,  \cite[Theorem B.1]{MR4019885} implies that this equation has a unique maximal solution. Since $\psi$ is a global solution, the unique maximal solution is defined on all $\R_+$ and coincides with $\psi$. This proves part (a).

 To prove part (c),
 in view of \eqref{eq: riccati extension}, it suffices to show that $K \ast R(\psi)$ is continuous on $\R_+$. The latter one is true, if $K \in L_{loc}^3$ and $R(\psi) \in L_{loc}^{3/2}$, which holds true due to $|R(\psi)|\leq C (1+|\psi|^2)$ and $\psi \in L_{loc}^{3/2}$. This proves part (c).
\end{proof}

Finally, we extend the exponential-affine transformation formula.
\begin{Corollary}\label{Theorem: affine riccati extension}
 Let $(b,\beta, \sigma, K)$ be admissible parameters and  suppose there exist $p \geq 2$ and $\eta \in (0,1)$ such that
 $[K]_{\eta,p,T} < \infty$ for each $T > 0$. Then
 \begin{align*}
  &\ \mathbb{E}\left[\mathrm{e}^{\int_{[0,t]} \langle X_{t-s}, \mu(ds) \rangle }\right]
  \\ &\ \ \ = \exp\left\{ \langle x_0, \mu([0,t]) \rangle + \int_0^t \langle x_0, R(\psi(s,\mu)) \rangle ds + \int_0^t \langle b, \psi(s,\mu) \rangle ds \right\}
     \\ &\ \ \ = \exp\left\{ \int_{[0,t]} \langle \E[X_{t-s}], \mu(ds)\rangle + \sum_{i=1}^m \frac{\sigma_i^2}{2}\int_0^t \E[X_{i,t-s}] \psi_{i}(s,\mu)^2 ds \right\}
 \end{align*}
 hold for each $\mu \in \mathcal{M}_{lf}^-$, where $\psi$ denotes the unique solution of \eqref{eq: riccati extension}.
\end{Corollary}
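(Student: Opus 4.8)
The plan is to deduce the first identity from the already-established formula \eqref{eq: fourier transform} by an approximation argument, and the second identity from the first by a purely analytic rewriting. Fix $\mu\in\mathcal{M}_{lf}^-$ and $t>0$, and let $(f_n)_{n\ge1}\subset L_{loc}^1(\R_+;\C_-^m)$ be the sequence supplied by Lemma \ref{lemma: admissible mu approximation} (so that $f_n=\rho_n\ast\mu$ with $\rho_n(r)=ne^{-nr}$). Apply \eqref{eq: fourier transform} with $u=0$ and $f=f_n$ and pass to the limit $n\to\infty$ on both sides. On the left-hand side, since $\mathrm{Re}(f_n)\leq0$ and $X$ takes values in $\R_+^m$ we have $|e^{\int_0^t\langle X_{t-s},f_n(s)\rangle ds}|\leq1$, so by dominated convergence it suffices to prove pathwise convergence of the exponent. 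Working on the event of full measure where $r\mapsto X_r(\omega)$ is continuous with $X_0(\omega)=x_0$, I would write $X_{t-s}=x_0+(X_{t-s}-x_0)$: the path $r\mapsto X_r(\omega)-x_0$ is continuous and vanishes at $0$, so Lemma \ref{lemma: admissible mu approximation}(iii) applies to it, while the constant part gives $\int_0^t\langle x_0,f_n(s)\rangle ds\to\langle x_0,\mu([0,t))\rangle$ by a direct computation from $f_n=\rho_n\ast\mu$ (Fubini and dominated convergence). Consequently the exponent converges to $\int_{[0,t)}\langle X_{t-s}(\omega),\mu(ds)\rangle$, which differs from $\int_{[0,t]}\langle X_{t-s}(\omega),\mu(ds)\rangle$ only by the deterministic constant $\langle x_0,\mu(\{t\})\rangle$.

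On the right-hand side, the argument in the proof of Theorem \ref{Theorem: riccati extension}(a) shows that $\psi_n:=\psi(\cdot,0,f_n)$ converges to $\psi(\cdot,\mu)$ in $L^p([0,T];\C^m)$ and a.e.\ (first along a subsequence, then for the full sequence by the uniqueness in Theorem \ref{Theorem: riccati extension}(a)). Combining the uniform $L^2$-bounds of Lemma \ref{lemma: local integrability psi} with the local Lipschitz estimate \eqref{esti: R} and the Cauchy--Schwarz inequality, exactly as in that proof, gives $R(\psi_n)\to R(\psi(\cdot,\mu))$ in $L^1([0,T];\C^m)$; hence $\int_0^t\langle x_0,R(\psi_n(s))\rangle ds$ and $\int_0^t\langle b,\psi_n(s)\rangle ds$ converge to the corresponding integrals for $\psi(\cdot,\mu)$, and $\int_0^t\langle f_n(s),x_0\rangle ds\to\langle x_0,\mu([0,t))\rangle$ as above. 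Equating the two limits yields the first identity with $\int_{[0,t)}$ and $\mu([0,t))$ in place of $\int_{[0,t]}$ and $\mu([0,t])$; multiplying both sides by $e^{\langle x_0,\mu(\{t\})\rangle}$, which is legitimate because $X_0=x_0$ is non-random, produces the stated first equality.

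For the second equality I would argue analytically. Let $m(\tau):=\E[X_\tau]$; taking expectations in \eqref{eq: VCIR} (the stochastic integral being a martingale with zero mean by the moment bounds quoted from \cite{MR4019885}) shows that $m$ solves the linear Volterra equation $m(\tau)=x_0+\int_0^\tau K(\tau-r)(b+\beta m(r))dr$. Then, using $R(\psi)_i=\langle\psi,\beta^i\rangle+\tfrac{\sigma_i^2}{2}\psi_i^2$ to split off the quadratic part, substituting $m(t-s)-x_0=\int_0^{t-s}K(t-s-r)(b+\beta m(r))dr$ into the second expression for the exponent, inserting the Riccati--Volterra equation \eqref{eq: riccati extension} for $\psi_i$ in the rearranged form
\[
 \tfrac{\sigma_i^2}{2}(K_i\ast\psi_i^2)(\tau)=\psi_i(\tau)-\int_{[0,\tau)}K_i(\tau-r)\mu_i(dr)-\int_0^\tau K_i(\tau-r)\langle\psi(r),\beta^i\rangle\,dr,
\]
and applying Fubini's theorem a few times, all cross-terms cancel after rearranging the sums (using that $\beta^i$ is the $i$-th column of $\beta$) together with $m(0)=x_0$, leaving precisely the first expression for the exponent, namely $\langle x_0,\mu([0,t])\rangle+\int_0^t\langle x_0,R(\psi(s,\mu))\rangle ds+\int_0^t\langle b,\psi(s,\mu)\rangle ds$.

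The main obstacle is less a matter of depth than of bookkeeping at two points. First, the interchange of limit and expectation on the left-hand side relies essentially on the sign condition $\mathrm{Re}(\mu)\leq0$ together with $X\geq0$, which force the uniform bound $1$; the moment bounds alone would not control $e^{\int_0^t\langle X_{t-s},f_n(s)\rangle ds}$ for a general measure $\mu$. Second, the approximant $f_n=\rho_n\ast\mu$ recovers $\mu$ only on $[0,t)$, so the atom $\mu(\{t\})$ must be reinstated by hand and tracked consistently on both sides of every identity -- harmless because $X_0$ is deterministic, but easy to mishandle. The Fubini computation underlying the second equality is the longest single step, but it is entirely elementary.
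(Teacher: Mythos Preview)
Your argument for the first identity is essentially the paper's: approximate $\mu$ by $f_n$ via Lemma~\ref{lemma: admissible mu approximation}, pass to the limit on both sides of the affine formula for $(0,f_n)$, use dominated convergence on the left (via $\mathrm{Re}(\mu)\le 0$ and $X\ge 0$), and use the $L^2$-convergence $\psi_n\to\psi(\cdot,\mu)$ together with \eqref{esti: R} on the right. Your handling of the atom $\mu(\{t\})$ and the upgrade from a subsequence to the full sequence (via uniqueness of the solution of \eqref{eq: riccati extension}) are both correct and slightly more explicit than the paper.

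For the second identity your route is genuinely different. The paper does \emph{not} carry out the Fubini computation you sketch; instead it quotes the alternative affine representation \cite[(4.5),(4.7)]{MR4019885}, which already gives
\[
 \E\big[e^{\int_0^t\langle X_s-x_0,f_n(t-s)\rangle ds}\big]
 =\exp\Big\{\int_0^t\langle \E[X_{t-s}]-x_0,f_n(s)\rangle ds+\sum_i\tfrac{\sigma_i^2}{2}\int_0^t\E[X_{i,t-s}]\psi_{i,n}(s)^2 ds\Big\},
\]
and then passes to the limit $n\to\infty$ exactly as for the first identity (using Lemma~\ref{lemma: admissible mu approximation}(iii) with $g(r)=\E[X_r]-x_0$ and the $L^2$-convergence of $\psi_n$ to control the quadratic term). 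Your approach is more self-contained: with $g(\tau)=m(\tau)-x_0$ and $c=b+\beta x_0$ one has $g=K\ast(c+\beta g)$, and substituting both this and \eqref{eq: riccati extension} followed by Fubini indeed produces the cancellation $\int_0^t\langle\beta g(r),\psi(t-r)\rangle\,dr=\int_0^t\langle g(t-s),\beta^\top\psi(s)\rangle\,ds$, leaving exactly $\int_0^t\langle c,\psi(s)\rangle\,ds$. The paper's approach is shorter because it offloads this computation to the reference; yours avoids the external dependence at the price of the bookkeeping you correctly flag as the longest step.
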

\begin{proof}
 For the first equality, we let $f_{n_k},\psi_{n_k}$ be the same as in the proof of Theorem \ref{Theorem: riccati extension}. Then
 \begin{align}\label{new eq - x0}
 \mathbb{E}\left[\mathrm{e}^{\int_0^t \langle X_s-x_0,f_{n_k}(t-s) \rangle ds}\right]
 = \exp\left\{  \int_0^t \langle x_0, R(\psi_{n_k}(s)) \rangle ds + \int_0^t \langle b, \psi_{n_k}(s) \rangle ds \right\}.
\end{align}
 Using Lemma \ref{lemma: admissible mu approximation}.(iii) for $g(s) = X_s-x_0$ gives $\int_0^t \langle X_s-x_0, f_{n_k}(t-s) \rangle ds \longrightarrow \int_{[0,t]} \langle X_{t-s}-x_0, \mu(ds) \rangle$ pointwise. Dominated convergence gives
 \begin{align}\label{eq:02}
  \lim_{n \to \infty}\mathbb{E}\left[\mathrm{e}^{\int_0^t \langle X_{t-s}-x_0, f_{n_k}(s) \rangle ds}\right] = \mathbb{E}\left[\mathrm{e}^{\int_{[0,t]} \langle X_{t-s}-x_0, \mu(ds) \rangle}\right], \qquad \forall t \geq 0.
 \end{align}
 For the two terms appearing on the right-hand side of \eqref{new eq - x0} we have
 \[
  \left| \int_0^t \langle b, \psi_{n_k}(s) \rangle ds - \int_0^t \langle b, \psi(s)\rangle ds \right| \leq |b|\sqrt{T} \| \psi_{n_k} - \psi \|_{L^2([0,T])}
 \]
 and likewise, using \eqref{esti: R} we obtain
 \begin{align*}
     &\ \left| \int_0^t \langle x_0, R(\psi_{n_k}(s))\rangle ds - \int_0^t \langle x_0, R(\psi(s))\rangle ds \right|
     \\ &\leq C|x_0| \left( \sqrt{T} + \sup_{k \geq 1}\| \psi_{n_k}\|_{L^2([0,T])} + \|\psi\|_{L^2([0,T])} \right) \| \psi_{n_k} - \psi\|_{L^2([0,T])}.
 \end{align*}
 Hence passing to the limit $k\to \infty$ in \eqref{new eq - x0} proves the first identity. For the second identity, first observe that
   \begin{align*}
 & \mathbb{E}\left[\mathrm{e}^{\int_0^t \langle X_s-x_0,f_{n_k}(t-s) \rangle ds}\right] \\
 &\qquad = \exp\left\{ \int_0^t \langle \E[X_{t-s}]-x_0,f_{n_k}(s)\rangle ds + \sum_{i=1}^m \frac{\sigma_i^2}{2} \int_0^t \E[X_{i,t-s}] \psi_{i,n}(s)^2 ds \right\}
\end{align*}
holds by \cite[equations (4.5),(4.7)]{MR4019885}.
 In view of \eqref{eq:02} it suffices to show that the right-hand side is converges to the desired limit as $n\to \infty$. Since $g(r) = \E[X_{r}]-x_0$ is continuous (see \eqref{lemma: first moment} below) and $g(0)=0$, by Lemma \ref{lemma: admissible mu approximation}.(iii), we find
 $\lim_{k\to\infty}\int_0^t \langle \E[X_{t-s}]-x_0,f_{n_k}(s)\rangle ds
 = \int_{[0,t]} \langle \E[X_{t-s}]-x_0, \mu(ds)\rangle$. For the second term we note that
 \begin{align*}
     &\ \left| \int_0^t \E[X_{i,t-s}] \psi_{i,n}(s)^2 ds - \int_0^t \E[X_{i,t-s}] \psi_{i}(s)^2 ds\right|
    \\ &\leq \sup_{s \in [0,t]}\E[X_{i,s}] \int_0^t (|\psi_{i,n}(s)| + |\psi_{i}(s)|)| \psi_{i,n}(s) - \psi_{i}(s)|ds
     \\ &\leq \sup_{s \in [0,t]}\E[X_{i,s}] \left( \sup_{k \geq 1}\| \psi_{n_k}\|_{L^2([0,T])} + \| \psi\|_{L^2([0,T])} \right) \| \psi_{n_k} - \psi \|_{L^2([0,T])},
 \end{align*}
 which tends to zero as $k \to \infty$. Here we used the fact that $\sup_{s \in [0,t]}\E[X_{i,s}]<\infty$ due to \cite{MR4019885}.
\end{proof}

\subsection{Stability in the admissible parameters}

As a particular application of our results, we show that the Volterra square-root process depends continuously on the admissible parameters $(b,\beta,\sigma,K)$.
\begin{Theorem}\label{Theorem: stability psi}
 Let $(b,\beta,\sigma,K), (b,\beta^{(n)},\sigma^{(n)},K^{(n)})$ be admissible parameters with the properties
 (i) $\beta^{(n)} \longrightarrow \beta$;
 (ii) $\sigma^{(n)} \longrightarrow \sigma$;
 (iii) There exists $p \geq 2$ such that $\|K^{(n)} - K \|_{L^p([0,T])} \longrightarrow 0$ for each $T > 0$;
 (iv) There exists $\eta \in (0,1)$ with $[K]_{\eta,p,T} + \sup_{n \geq 1}[K^{(n)}]_{\eta,p,T} < \infty$ for each $T > 0$. For $\mu, \mu^{(n)} \in \mathcal{M}_{lf}^-$ let $\psi$ and $\psi_n$ be the corresponding unique solutions of \eqref{eq: riccati extension}. Suppose that $|\mu^{(n)} - \mu|([0,T]) \longrightarrow 0$ for each $T > 0$. Then
 \[
  \lim_{n \to \infty}\| \psi_n - \psi \|_{L^p([0,T])} = 0.
 \]
\end{Theorem}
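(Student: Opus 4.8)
The plan is to derive a Volterra equation for the difference $\psi_n - \psi$, show that the corresponding linear operator is invertible with uniformly controlled inverse, and then estimate the inhomogeneous term. Write $R(z) = (R_1(z), \dots, R_m(z))^\top$ with $R_i$ depending on $\beta$ and $\sigma$, and let $R^{(n)}$ be the corresponding vector-field for $(\beta^{(n)}, \sigma^{(n)})$. Subtracting the two instances of \eqref{eq: riccati extension} gives, for $\Delta(t) := \psi_n(t) - \psi(t)$,
\begin{align*}
 \Delta(t) &= \int_{[0,t)} (K^{(n)}(t-s) - K(t-s))\mu^{(n)}(ds) + \int_{[0,t)} K(t-s)(\mu^{(n)} - \mu)(ds)
 \\ &\quad + \int_0^t (K^{(n)}(t-s) - K(t-s))R^{(n)}(\psi_n(s))ds
 \\ &\quad + \int_0^t K(t-s)\left( R^{(n)}(\psi_n(s)) - R(\psi_n(s)) \right)ds + \int_0^t K(t-s)\left( R(\psi_n(s)) - R(\psi(s)) \right)ds.
\end{align*}
The last term is the ``principal part'': using \eqref{esti: R} one has $|R(\psi_n(s)) - R(\psi(s))| \le C(1 + |\psi_n(s)| + |\psi(s)|)|\Delta(s)|$, so this term is a convolution of $K$ against a function dominated by $C(1 + |\psi_n| + |\psi|)|\Delta|$. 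Collecting the first four terms into an inhomogeneity $h_n$ and the last into a linearized operator, one gets $|\Delta| \le |h_n| + C\, |K| \ast \big( (1 + |\psi_n| + |\psi|)|\Delta| \big)$ pointwise.

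First I would establish the uniform a priori bounds. By Theorem \ref{Theorem: riccati extension}(b) applied to both parameter sets, and using hypothesis (iv) together with Remark \ref{integrability of RB and EB} (the resolvents $E_{\beta^{\top}}$, $E_{(\beta^{(n)})^{\top}}$ are uniformly bounded in $L^q([0,T])$ since $K^{(n)} \to K$ in $L^p$ and hence in $L^q$ for $q \le p$, and $\beta^{(n)} \to \beta$), one obtains $\sup_n \|\psi_n\|_{L^p([0,T])} < \infty$ and in particular $\sup_n \|\psi_n\|_{L^2([0,T])} < \infty$ for every $T$. This makes the weight $w_n(s) := C(1 + |\psi_n(s)| + |\psi(s)|)$ uniformly bounded in $L^2([0,T])$.

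Next I would run a Gronwall-type / Picard-iteration argument on the inequality $|\Delta| \le |h_n| + |K| \ast (w_n |\Delta|)$. Because $K \notin L^\infty$ in general, the standard scalar Gronwall lemma does not apply directly; instead I would iterate the inequality, noting that the $N$-fold composition of the operator $g \mapsto |K|\ast(w_n g)$ has operator norm on $L^2([0,T])$ that tends to zero as $N \to \infty$ (this is the usual ``Volterra operators are quasinilpotent'' phenomenon: $\| (|K|\ast(w_n\cdot))^N g\|_{L^2([0,T])} \le \frac{(C_T)^N}{\sqrt{N!}}\|g\|_{L^2([0,T])}$ by Young's inequality combined with $\|w_n\|_{L^2}$ bounds and the decay of $\|K\ast\cdots\ast K\|$), with constants \emph{uniform in $n$} thanks to the uniform $L^2$-bound on $w_n$ and the $L^p$-boundedness of $K^{(n)}$. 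Summing the Neumann series gives $\|\Delta\|_{L^p([0,T])} \le C_T \|h_n\|_{L^p([0,T])}$ with $C_T$ independent of $n$. Alternatively, one may invoke \cite[Theorem B.1]{MR4019885} or the resolvent construction in Section 2 directly to solve the linearized equation, but the quasinilpotence estimate is the cleanest self-contained route.

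Finally I would show $\|h_n\|_{L^p([0,T])} \to 0$. For the two measure-convolution terms: $\|(K^{(n)} - K)\ast \mu^{(n)}\|_{L^p} \le \|K^{(n)} - K\|_{L^p([0,T])}\,|\mu^{(n)}|([0,T])$, which vanishes by (iii) since $\sup_n|\mu^{(n)}|([0,T]) < \infty$ (it converges to $|\mu|([0,T])$); and $\|K\ast(\mu^{(n)} - \mu)\|_{L^p} \le \|K\|_{L^p([0,T])}\,|\mu^{(n)} - \mu|([0,T]) \to 0$ by hypothesis. For the term $(K^{(n)}-K)\ast R^{(n)}(\psi_n)$, Young's inequality bounds it by $\|K^{(n)} - K\|_{L^p([0,T])}\|R^{(n)}(\psi_n)\|_{L^1([0,T])}$ and $\|R^{(n)}(\psi_n)\|_{L^1} \le C(1 + \|\psi_n\|_{L^2}^2)$ is uniformly bounded. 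For $K\ast(R^{(n)}(\psi_n) - R(\psi_n))$: since $R^{(n)} - R$ has coefficients (entries of $\beta^{(n)} - \beta$ and $(\sigma_i^{(n)})^2 - \sigma_i^2$) tending to zero, $|R^{(n)}(\psi_n(s)) - R(\psi_n(s))| \le \e_n(1 + |\psi_n(s)|^2)$ with $\e_n \to 0$, so this term is $\le \e_n \|K\|_{L^p([0,T])}(T + \|\psi_n\|_{L^2([0,T])}^2) \to 0$. Hence $\|h_n\|_{L^p([0,T])} \to 0$ and therefore $\|\psi_n - \psi\|_{L^p([0,T])} \to 0$. The main obstacle is the Gronwall step: one must be careful that the constant in the Neumann-series bound for the linearized Volterra operator is genuinely uniform in $n$, which requires the uniform $L^2$-bounds on $\psi_n$ from Theorem \ref{Theorem: riccati extension}(b) to be fed correctly into the quasinilpotence estimate for $K^{(n)}$ rather than $K$.
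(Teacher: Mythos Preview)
Your approach is correct but takes a different route from the paper. The paper does not run a Gronwall argument at all; instead it uses the uniform fractional-Sobolev bounds from Theorem~\ref{Theorem: riccati extension}(b): since $\sup_n [K^{(n)}]_{\eta,p,T} < \infty$ and the resolvents $E^{(n)}_{(\beta^{(n)})^\top}$ are uniformly bounded in $L^p([0,T])$ (by continuity of the resolvent map, \cite[Theorem 2.3.1]{MR1050319}), one gets $\sup_n \|\psi_n\|_{W^{\eta,p}([0,T])} < \infty$. By compact embedding of $W^{\eta,p}$ into $L^p$, any subsequence has a further subsequence converging in $L^p$ to some $\tilde\psi$; a direct limit in the equation shows $\tilde\psi$ solves \eqref{eq: riccati extension}, and uniqueness forces $\tilde\psi = \psi$. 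Your direct stability argument is closer in spirit to the paper's own Theorem~\ref{Theorem: continuity in mu} and has the advantage of being quantitative (it would yield an explicit bound $\|\psi_n - \psi\|_{L^p} \le C_T \|h_n\|_{L^p}$), whereas the paper's compactness route is shorter but non-constructive and relies on the Sobolev machinery already in place.

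Two small points on your Gronwall step. First, the cleanest way to close the estimate is not the Neumann-series quasinilpotence bound you sketch (which is plausible but not standard in the form $(C_T)^N/\sqrt{N!}$); rather, square the pointwise inequality as in the proof of Theorem~\ref{Theorem: continuity in mu} to obtain $|\Delta(t)|^2 \le 2|h_n(t)|^2 + 2\|w_n\|_{L^2([0,T])}^2 \int_0^t \|K(t-s)\|_2^2 |\Delta(s)|^2\,ds$, and then invoke the Volterra Gronwall lemma (\cite[Theorem A.2]{ACLP19}) with the scalar kernel $c_n\|K(\cdot)\|_2^2 \in L^{p/2}_{loc}$; the resulting resolvent is uniformly controlled because $c_n = 2\|w_n\|_{L^2}^2$ is bounded in $n$. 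Second, your closing remark about feeding the bounds ``into the quasinilpotence estimate for $K^{(n)}$ rather than $K$'' is a slip: in your own decomposition the principal term carries the kernel $K$, not $K^{(n)}$, so the Gronwall constant depends on $n$ only through $w_n$, which you have already bounded uniformly.
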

\begin{proof}
 Since $\beta^{(n)},\sigma^{(n)}$ converge, they are bounded. Similarly, we have $\sup_{n\ge 1}\| K^{(n)}\|_{L^2([0,t])}<\infty$ for each $T>0$.
 Noting that $K^{(n)}(\beta^{(n)})^{\top}\to K\beta^{\top}$  in $L^p([0,T])$  for each $T>0$, by \cite[Theorem 2.3.1]{MR1050319}, we also obtain $\sup_{n\ge1}\|R_{(\beta^{(n)})^{\top}}^{(n)}\|_{L^1([0,T])}<\infty$. In view of the definition in \eqref{eq: def Ebeta}, we thus get $\sup_{n\ge1}\|E_{(\beta^{(n)})^{\top}}^{(n)}\|_{L^p([0,T])}<\infty$ by Young's inequality. Moreover, the particular form of the inequalities in Theorem \ref{Theorem: riccati extension}.(b) as well as the constant $C$ imply $\sup_{n \geq 1}\| \psi_n \|_{L^p([0,T])} < \infty$ and subsequently
 $\sup_{n \geq 1}\| \psi_n \|_{W^{\eta,p}([0,T])} < \infty$.
 Hence $(\psi_n)_{n \in \N} \subset L^p([0,T];\C_-^m)$ is relatively compact. Let $\widetilde{\psi}$ be the limit for some subsequence $\psi_{n_k}$. If we can show that $\tilde{\psi}$ is a solution of \eqref{eq: riccati extension}, by uniqueness, we must have $\psi = \tilde{\psi}$ and thus the convergence $\| \psi_n - \psi \|_{L^p([0,T])} = 0$ as $n\to\infty$, since the convergent subsequence is arbitrarily chosen.

 Next we show that $\tilde{\psi}$ is a solution of \eqref{eq: riccati extension}, i.e.,  $\tilde{\psi} = K \ast \mu + K \ast R(\tilde{\psi})$. Noting that $\psi_n = K^{(n)} \ast \mu^{(n)} +K^{(n)} \ast R^{(n)}(\psi_n)$ with $R_i^{(n)}(u) = \langle u, \beta^{i,(n)}\rangle + \frac{(\sigma_i^{(n)})^2}{2}u_i^2$ and $\beta^{i,(n)} = (\beta_{1i}^{(n)}, \dots, \beta_{mi}^{(n)})^{\top}$, it suffices to show that
 \[
   \lim_{n \to \infty}\| K^{(n)} \ast \mu^{(n)} + K^{(n)} \ast R^{(n)}(\psi_n)
     - K \ast \mu -K \ast R(\widetilde{\psi})\|_{L^p([0,T])} = 0.
 \] 
 Using the properties (i) -- (iv) combined to similar estimates to the proofs of previous sections, it is not difficult to see that this convergence is satisfied. The details are left for the reader. 
\end{proof}

Consequently, we can now prove that the law of the Volterra square-root process depends continuously on the parameters.
\begin{Corollary}
 Let $(b,\beta,\sigma,K), (b^{(n)},\beta^{(n)},\sigma^{(n)},K^{(n)})$ be admissible parameters with the properties (i) -- (iv) from Theorem \ref{Theorem: stability psi}, and $b^{(n)} \longrightarrow b$.
 Let $X$ and $X^n$ be the Volterra square-root processes with admissible parameters $(b,\beta,\sigma, K)$ and $(b^{(n)}, \beta^{(n)}, \sigma^{(n)}, K^{(n)})$ starting from the same initial state. Then the law of $X^{(n)}$ on $C(\R_+; \R_+^m)$ converges weakly to that of $X$.
\end{Corollary}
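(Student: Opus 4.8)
The plan is to prove weak convergence on $C(\R_+;\R_+^m)$ in the two classical steps: first the convergence of the finite dimensional distributions of $X^{(n)}$ to those of $X$, and then the tightness of $(X^{(n)})_{n\ge1}$ in $C(\R_+;\R_+^m)$. Once both are available, Prokhorov's theorem yields relative compactness of $(\mathcal L(X^{(n)}))_{n\ge1}$, and since the finite dimensional distributions determine a law on $C(\R_+;\R_+^m)$, every subsequential weak limit must coincide with the law of $X$, which gives the claim.

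For the finite dimensional distributions I would use the extended exponential--affine transformation formula, Corollary \ref{Theorem: affine riccati extension}, to read them off from the Riccati--Volterra equation \eqref{eq: riccati extension}. Fix $0\le t_1<\dots<t_k$ and $v_1,\dots,v_k\in\R^m$, set $t:=t_k$ and $\mu:=\sum_{j=1}^{k}(iv_j)\,\delta_{t_k-t_j}$; then $\mu\in\mathcal{M}_{lf}^-$ since $\mathrm{Re}(\mu)=0$, and $\int_{[0,t]}\langle X_{t-s},\mu(ds)\rangle=i\sum_{j=1}^k\langle v_j,X_{t_j}\rangle$. Property (iv) gives $[K^{(n)}]_{\eta,p,T}<\infty$, so Corollary \ref{Theorem: affine riccati extension} applies to each $X^{(n)}$ and yields
\[ \E\!\left[e^{\,i\sum_{j=1}^k\langle v_j,X^{(n)}_{t_j}\rangle}\right]=\exp\!\left\{\langle x_0,\mu([0,t])\rangle+\int_0^t\langle x_0,R^{(n)}(\psi_n(s,\mu))\rangle\,ds+\int_0^t\langle b^{(n)},\psi_n(s,\mu)\rangle\,ds\right\}, \]
where $\psi_n(\cdot,\mu)$ solves \eqref{eq: riccati extension} for $(b^{(n)},\beta^{(n)},\sigma^{(n)},K^{(n)})$ and $R^{(n)}$ is the associated Riccati nonlinearity. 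The first summand does not depend on $n$. Theorem \ref{Theorem: stability psi}, applied with the constant sequence $\mu^{(n)}\equiv\mu$, gives $\psi_n(\cdot,\mu)\to\psi(\cdot,\mu)$ in $L^p([0,t];\C^m)$, hence in $L^2([0,t];\C^m)$, together with $\sup_n\|\psi_n(\cdot,\mu)\|_{L^2([0,t])}<\infty$. Combining this with $b^{(n)}\to b$, $\beta^{(n)}\to\beta$, $\sigma^{(n)}\to\sigma$, the quadratic Lipschitz bound \eqref{esti: R}, the elementary estimate $|R^{(n)}(w)-R(w)|\le C(|\beta^{(n)}-\beta|+|\sigma^{(n)}-\sigma|)(1+|w|^2)$, and Cauchy--Schwarz, all three terms converge to the corresponding expressions for $X$. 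Thus $\E[e^{i\sum_j\langle v_j,X^{(n)}_{t_j}\rangle}]\to\E[e^{i\sum_j\langle v_j,X_{t_j}\rangle}]$, and L\'evy's continuity theorem on $\R^{mk}$ gives weak convergence of $(X^{(n)}_{t_1},\dots,X^{(n)}_{t_k})$ to $(X_{t_1},\dots,X_{t_k})$.

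For tightness I would use a Kolmogorov--Chentsov criterion on each $C([0,T];\R^m)$: since every $X^{(n)}$ starts deterministically at $x_0$, it suffices to find $q\ge2$ and $\delta>0$ with $\sup_{n\ge1}\E[|X^{(n)}_t-X^{(n)}_s|^q]\le C_T|t-s|^{1+\delta}$ for $s,t\in[0,T]$ (equivalently, a uniform bound on the $W^{\kappa,q}([0,T])$--moment of the paths with $\kappa q>1$, so that $W^{\kappa,q}([0,T])\hookrightarrow\hookrightarrow C([0,T])$). Such an increment estimate is exactly of the type established in Section~4: by the Burkholder--Davis--Gundy and Minkowski inequalities one bounds $\E[|X^{(n)}_t-X^{(n)}_s|^q]$ by a constant times $\big(\int_0^{|t-s|}\|K^{(n)}(r)\|_2^2\,dr+\int_0^T\|K^{(n)}(r+|t-s|)-K^{(n)}(r)\|_2^2\,dr\big)^{q/2}$, and this constant depends on the admissible parameters only through $\sup_n(\|\beta^{(n)}\|_2+|\sigma^{(n)}|)<\infty$, $\sup_n\|K^{(n)}\|_{L^2([0,T])}<\infty$ (property (iii)), $\sup_n\|E^{(n)}_{(\beta^{(n)})^{\top}}\|_{L^1([0,T])}<\infty$ (shown in the proof of Theorem \ref{Theorem: stability psi}), and the uniform moment bounds $\sup_n\sup_{u\in[0,T]}\E[|X^{(n)}_u|^q]<\infty$.

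The main obstacle is the \emph{uniformity in $n$} of the modulus of continuity, i.e. producing a single $\omega$ with $\omega(h)\to0$ as $h\to0$ such that $\sup_n\int_0^{h}\|K^{(n)}(r)\|_2^2\,dr+\sup_n\int_0^T\|K^{(n)}(r+h)-K^{(n)}(r)\|_2^2\,dr\le\omega(h)$ for $h\in[0,1]$. The first supremum is harmless --- it is controlled by $h^{2\eta}[K^{(n)}]_{\eta,p,T}^2$ (with an extra power of $h$ when $p>2$), hence uniformly by property (iv). The delicate term is the second: property (iv) only controls its average $\int_0^1 h^{-1-\eta p}(\cdot)\,dh$, not a pointwise bound in $h$, and the naive passage from this averaged control to a uniform modulus with $q$ large enough for the compact Sobolev embedding does not close. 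I would therefore close the gap by imposing --- or, in the cases of interest, simply verifying --- a uniform version of condition (v), namely $\sup_n C_1^{(n)}+\sup_n C_2^{(n)}(T)<\infty$ and $\gamma_*:=\inf_n\gamma_n>0$; then the parenthesis above is $\le(C_1^{(n)}+C_2^{(n)}(T))h^{\gamma_n}$, so $\sup_n\E[|X^{(n)}_t-X^{(n)}_s|^q]\le C_T|t-s|^{q\gamma_*/2}$ and the Kolmogorov criterion applies with any $q>2/\gamma_*$. For all concrete families occurring in this work --- e.g. the Gamma kernels $K^{(n)}(t)=t^{H_n-1/2}e^{-\lambda_n t}/\Gamma(H_n+1/2)$ with $H_n\to H$ and $\lambda_n\to\lambda$ --- this uniform version of (v) holds automatically, so the argument goes through. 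Finally, the subsequential weak limit is supported on $C(\R_+;\R_+^m)$ because $\R_+^m$ is closed and each $X^{(n)}$ is $\R_+^m$--valued, which completes the identification of the limit with the law of $X$.
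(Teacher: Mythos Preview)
Your finite-dimensional-distribution argument is correct and is essentially what the paper does: apply Theorem \ref{Theorem: stability psi} with $\mu^{(n)}\equiv\mu=\sum_j u_j\delta_{t_k-t_j}$, then pass to the limit in the extended affine formula of Corollary \ref{Theorem: affine riccati extension}.

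The divergence is in the tightness step. The paper disposes of it in one sentence by invoking \cite[Lemma A.1]{MR4019885}. You instead attempt a pointwise Kolmogorov increment bound $\sup_n\E[|X^{(n)}_t-X^{(n)}_s|^q]\le C_T|t-s|^{1+\delta}$, correctly observe that hypothesis (iv) only controls the \emph{integrated} quantity $[K^{(n)}]_{\eta,p,T}$ and not the pointwise modulus $h\mapsto\int_0^T\|K^{(n)}(r+h)-K^{(n)}(r)\|_2^2\,dr$, and then propose to \emph{add} a uniform version of condition (v). That addition is not part of the stated hypotheses and is not needed. The route that closes under (i)--(iv) as written is precisely the one you mention parenthetically and then abandon: bound $\sup_n\E\big[\|X^{(n)}\|_{W^{\kappa,q}([0,T])}^q\big]$ directly. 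The BDG/Jensen/Fubini manipulation that does this is parallel to the proof of Theorem \ref{Theorem: sobolev regularity psi} --- carried out there for $\psi$, here for the stochastic convolution defining $X$ --- and the kernel enters only through $[K^{(n)}]_{\eta,p,T}$, so hypothesis (iv) already supplies the required uniformity in $n$ without any pointwise control in $h$; taking $q$ large then gives $\kappa q>1$ and the compact embedding into $C([0,T])$. The two approaches you called ``equivalent'' are not: the integrated Sobolev bound is strictly easier to obtain here than the pointwise Kolmogorov one, and that is exactly why your argument stalls while the paper's does not.
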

\begin{proof}
 For $\mu \in \mathcal{M}_{lf}^-$ let $\psi$ and $\psi_n$ be the corresponding unique solutions of \eqref{eq: riccati extension}. Let $R^{(n)}$ be the same as in the proof of Theorem \ref{Theorem: stability psi}, where we implicitly showed that $R^{(n)}(\psi_n) \to R(\psi)$ in $L^1([0,T])$. Then using $\psi_n \to \psi$ in $L^p([0,T])$ and the first identity in Corollary \ref{Theorem: affine riccati extension}, we find that
 \[
  \lim_{n \to \infty}\E[ e^{\int_{[0,T]}\langle X^n_{t-s}, \mu(ds)\rangle}]
  = \E[ e^{\int_{[0,T]}\langle X_{t-s}, \mu(ds)\rangle}].
 \]
 In particular, letting $\mu(ds) = \sum_{j=1}^n u_j \delta_{t_j}(ds)$ with $u_1,\dots, u_n \in \C_-^m$, and $0 \leq t_1 < \dots < t_n$ shows that the finite dimensional distributions of $X^n$ are convergent to those of $X$. Arguing as in the proof of \cite[Lemma A.1]{MR4019885}, we also know that $X^n$ is tight in $C(\R_+; \R_+^m)$. So for any subsequence of $X^n$, it has a subsequence converging in law to $X$. This proves that the law of $X^{(n)}$ on $C(\R_+; \R_+^m)$ converges weakly to that of $X$.
\end{proof}

\subsection{Differentiability in the initial condition}

In this section we study continuity and differentiability of $\psi$ with respect to the initial condition $\mu$.
\begin{Theorem}\label{Theorem: continuity in mu}
 Let $(b,\beta, \sigma, K)$ be admissible parameters and  suppose there exist $p \geq 2$ and $\eta \in (0,1)$ such that $[K]_{\eta,p,T} < \infty$ for each $T > 0$. Then for each pair of $\mu,\nu \in \mathcal{M}_{lf}^-$, there exists a constant $C(T,p) > 0$ such that
 \[
  \| \psi(\cdot, \mu + \e \nu) - \psi(\cdot, \mu) \|_{L^p([0,T])} \leq C(T,p)\e.
 \]
\end{Theorem}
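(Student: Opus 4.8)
The plan is to subtract the Volterra–Riccati equations \eqref{eq: riccati extension} for $\mu+\e\nu$ and $\mu$, bound the difference via the Lipschitz estimate \eqref{esti: R} on $R$, and close a Gronwall-type argument using Young's inequality together with the uniform $L^2$-bounds provided by Theorem~\ref{Theorem: riccati extension}.(b). Write $\psi_\e = \psi(\cdot,\mu+\e\nu)$ and $\psi = \psi(\cdot,\mu)$, and set $\Delta_\e = \psi_\e - \psi$. Subtracting the two equations gives
\[
 \Delta_\e(t) = \e \int_{[0,t)}K(t-s)\nu(ds) + \int_0^t K(t-s)\big(R(\psi_\e(s)) - R(\psi(s))\big)ds.
\]
First I would note that, by Theorem~\ref{Theorem: riccati extension}.(b) applied with the measure $\mu+\e\nu$ and with $\e \in [0,1]$ (say), one has a bound $\sup_{\e\in[0,1]}\|\psi_\e\|_{L^2([0,T])} \le M(T)$ depending only on $|\mu|([0,T]) + |\nu|([0,T])$, $T$, $p$, $\beta$, $\sigma$; combine this with $\|\psi\|_{L^2([0,T])}\le M(T)$. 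The first term on the right is bounded in $L^p([0,T])$ by $\e\,\|K\|_{L^p([0,T])}\,|\nu|([0,T])$ using the measure–convolution estimate recalled in Section~2. For the second term, apply \eqref{esti: R} pointwise to get $|R(\psi_\e(s))-R(\psi(s))| \le C(1 + |\psi_\e(s)| + |\psi(s)|)|\Delta_\e(s)|$.

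The main step is then a Gronwall argument on the interval $[0,T]$, which I would run by slicing $[0,T]$ into finitely many subintervals on which the relevant convolution operator has small norm — this is the standard device for Volterra equations with merely $L^p$ kernels, since there is no pointwise exponential Gronwall available. Concretely, on a subinterval $[a,a+\delta]$ one estimates, via Young's inequality with exponents chosen so that $\tfrac1p + \tfrac1q = \tfrac1p + 1$ forces $q=1$ after a Cauchy–Schwarz split of the weight $(1+|\psi_\e|+|\psi|)$,
\[
 \|\Delta_\e\|_{L^p([a,a+\delta])} \le \e\,\|K\|_{L^p}|\nu|([0,T]) + C\,\|K\|_{L^p([0,\delta])}\big(1 + 2M(T)\big)\,\|\Delta_\e\|_{L^2([a,a+\delta])},
\]
plus a contribution from the "memory" $\int_0^a$, which is already controlled in terms of $\|\Delta_\e\|_{L^p([0,a])}$ and can be absorbed inductively. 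Choosing $\delta$ small enough that $C\|K\|_{L^p([0,\delta])}(1+2M(T)) \le \tfrac12$ (possible by absolute continuity of $t\mapsto \int_0^t\|K(s)\|_2^p\,ds$), one absorbs the last term and obtains $\|\Delta_\e\|_{L^p([a,a+\delta])} \le C'\e$ on the first slice, then proceeds inductively over the finitely many slices, picking up only a multiplicative constant depending on $T$, $p$ at each step.

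The hard part will be bookkeeping the memory terms cleanly across slices: when estimating $\Delta_\e$ on $[a,a+\delta]$ the convolution $\int_0^a K(t-s)(R(\psi_\e)-R(\psi))(s)\,ds$ involves values of $K$ near argument $a$, not near $0$, so its norm is not small and must instead be bounded by $\|K\|_{L^p([0,T])}\cdot C(1+2M(T))\|\Delta_\e\|_{L^2([0,a])}$ and folded into the induction hypothesis $\|\Delta_\e\|_{L^p([0,a])}\le C_a\e$. I expect the estimate to produce a constant $C(T,p)$ that grows like a product over the $\lceil T/\delta\rceil$ slices, which is fine since $T$ is fixed. An alternative, cleaner route — which I would mention if the slicing proves cumbersome — is to invoke the resolvent directly: rewrite the equation for $\Delta_\e$ as $\Delta_\e = g_\e + K_\beta \ast \Delta_\e + (\text{quadratic remainder})$, solve the linear part with $E_{\beta^\top}$, and treat the quadratic remainder perturbatively using the uniform $L^2$-bound; this avoids slicing at the cost of slightly more algebra with \eqref{eq: def Ebeta}. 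Either way, the final inequality $\|\psi(\cdot,\mu+\e\nu)-\psi(\cdot,\mu)\|_{L^p([0,T])}\le C(T,p)\e$ follows.
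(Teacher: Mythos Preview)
Your setup is identical to the paper's: subtract the two Riccati--Volterra equations, apply the Lipschitz bound \eqref{esti: R} on $R$, and invoke the uniform $L^2$-control $\sup_{\e\in(0,1)}\|\psi(\cdot,\mu+\e\nu)\|_{L^2([0,T])}<\infty$ from Theorem~\ref{Theorem: riccati extension}.(b). Your slicing-and-iteration closure is correct in principle and would produce the claimed bound, but it is not the route the paper takes, and the paper's argument avoids precisely the bookkeeping you anticipate.

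The paper's trick is to \emph{square} first: from $|\Delta_\e(t)|\le \e|K\ast\nu(t)|+C\int_0^t\|K(t-s)\|_2(1+|\psi_\e(s)|+|\psi(s)|)|\Delta_\e(s)|\,ds$ one applies Cauchy--Schwarz inside the integral to pull out the factor $\int_0^t(1+|\psi_\e|+|\psi|)^2\,ds\le C(\mu,\nu)$ as a constant, yielding the clean scalar linear Volterra inequality
\[
 |\Delta_\e(t)|^2 \le 2\e^2|K\ast\nu(t)|^2 + C(\mu,\nu)\int_0^t \|K(t-s)\|_2^2\,|\Delta_\e(s)|^2\,ds.
\]
This is then solved in one shot by the Volterra--Gronwall lemma (\cite[Theorem~A.2]{ACLP19}): with $\widetilde R$ the resolvent of the second kind of $-C(\mu,\nu)\|K(\cdot)\|_2^2$ (which is $\le 0$ by \cite[Proposition~9.8.1]{MR1050319}), one gets $|\Delta_\e(t)|^2\le 2\e^2|K\ast\nu(t)|^2+2\e^2(|\widetilde R|\ast|K\ast\nu|^2)(t)$, and a final application of Young's inequality in $L^{p/2}$ yields the $L^p$ bound. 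So where you propose either slicing or linearising via $E_{\beta^\top}$, the paper instead builds a \emph{new} scalar resolvent adapted to the kernel $\|K\|_2^2$; this eliminates the memory bookkeeping entirely and gives an explicit constant. Your approach buys nothing extra here, but it is a legitimate alternative.
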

\begin{proof}
 Note that $\Delta_{\e}(t) := \psi(t, \mu + \e \nu) - \psi(t,\mu)$ satisfies
 \begin{align*}
     \Delta_{\e}(t) &= \e (K \ast \nu)(t) + \int_0^t K(t-s)(R(\psi(s, \mu + \e \nu)) - R(\psi(s, \mu)))ds.
 \end{align*}
 Hence we obtain from \eqref{esti: R}
 \begin{align*}
    &\  |\Delta_{\e}(t)|^2 \leq 2\e^2 |K\ast \nu(t)|^2
     \\ & \ \ \ + 2C^2 \left(\int_0^t (1 + |\psi(s,\mu + \e \nu)| + |\psi(s,\mu)|)^2 ds \right) \int_0^t \| K(t-s)\|_2^2 |\Delta_{\e}(s)|^2 ds
     \\ &\leq 2\e^2 |K\ast \nu(t)|^2 + C(\mu,\nu) \int_0^t \|K(t-s)\|_2^2 |\Delta_{\e}(s)|^2 ds,
 \end{align*}
 where
 \[
  C(\mu, \nu) = 6C^2 \left( 1 + \sup_{\e \in (0,1)}\| \psi(\cdot, \mu + \e\nu) \|_{L^2([0,T])}^2 + \| \psi(\cdot, \mu)\|_{L^2([0,T])}^2 \right)
 \]
 is finite due to Theorem \ref{Theorem: riccati extension}.
 Now let $\widetilde{R} \in L_{loc}^{p/2}([0,T]; \R)$ be the resolvent of the second kind of $-C(\mu,\nu)\|\widetilde{K}(\cdot)\|_2^2 \in L_{loc}^{p/2}([0,T]; \R)$. Using \cite[Proposition 9.8.1]{MR1050319} we find $\widetilde{R} \leq 0$. Hence a Volterra analogue of the Gronwall inequality (see \cite[Theorem A.2]{ACLP19}) gives
 \begin{align*}
  |\Delta_{\e}(t)|^2 &\leq 2\e^2 |K\ast \nu(t)|^2 + 2 \e^2 \int_0^t |\widetilde{R}(t-s)||K\ast \nu(s)|^2 ds.
 \end{align*}
 This gives
 \begin{align*}
     &\ \| \Delta_{\e}\|_{L^p([0,T])}
     \\ &\leq 2\e \| K \ast \nu\|_{L^p([0,T])} + 2\e \| |\widetilde{R}| \ast |K\ast \nu|^{2}\|_{L^{p/2}([0,T])}^{1/2}
     \\ &\leq 2\e \| K\|_{L^p([0,T])}|\nu|([0,T]) + 2\e \| \widetilde{R}\|_{L^{p/2}([0,T])}^{1/2} \| K\|_{L^2([0,T])}|\nu|([0,T])
 \end{align*}
 and hence proves the assertion.
\end{proof}

Next we prove differentiability in $\mu$.
\begin{Theorem}
 Let $(b,\beta, \sigma, K)$ be admissible parameters and  suppose there exist $p \geq 2$ and $\eta \in (0,1)$ such that $[K]_{\eta,p,T} < \infty$ for each $T > 0$. Then for all $\mu,\nu \in \mathcal{M}_{lf}^-$ the limit
 \[
  \lim_{\e \to 0}  \frac{\psi(\cdot, \mu + \e \nu) - \psi(\cdot,\mu)}{\e} = D_{\nu}\psi(\cdot,\mu)
 \]
 exists in $L^p([0,T]; \C^m)$ for each $T > 0$. This limit satisfies
 \begin{align}\label{eq: mu derivative}
  D_{\nu}\psi(t,\mu) = \int_{[0,t)}K(t-s)\nu(ds) + \int_0^t K(t-s)(DR)(\psi(s,\mu))D_{\nu}\psi(s,\mu)ds,
 \end{align}
 where $DR(x) = \beta^{\top} + \frac{1}{2}\mathrm{diag}(\sigma_1^2x_1,\dots, \sigma_m^2x_m)$.
\end{Theorem}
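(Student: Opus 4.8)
The plan is to show that the candidate derivative $D_\nu\psi(\cdot,\mu)$, defined as the unique solution of the \emph{linear} Volterra equation \eqref{eq: mu derivative}, exists and that the difference quotients converge to it in $L^p([0,T];\C^m)$. First I would verify that \eqref{eq: mu derivative} has a unique solution in $L^p([0,T];\C^m)$: the coefficient $(DR)(\psi(s,\mu)) = \beta^\top + \tfrac12\mathrm{diag}(\sigma_1^2\psi_1(s,\mu),\dots,\sigma_m^2\psi_m(s,\mu))$ is, entry by entry, a function in $L^p_{loc}$ by Theorem \ref{Theorem: riccati extension}.(b) (recall $\psi(\cdot,\mu)\in L^2_{loc}\subset L^2_{loc}$, and in fact $\psi(\cdot,\mu)\in L^p_{loc}$), and the inhomogeneous term $K\ast\nu\in L^p_{loc}$; so this is a linear Volterra equation with $L^p_{loc}$ data and an $L^p_{loc}$ kernel built from $K$ times an $L^p_{loc}$ function, and existence/uniqueness follows from \cite[Theorem B.1]{MR4019885} (or the standard linear Volterra theory of \cite[Chapter 2]{MR1050319}) together with a Gronwall-type a priori bound exactly as in the proof of Theorem \ref{Theorem: continuity in mu}.

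Next, writing $\Delta_\e(t) = \psi(t,\mu+\e\nu)-\psi(t,\mu)$ as in the proof of Theorem \ref{Theorem: continuity in mu} and setting $Q_\e(t) := \e^{-1}\Delta_\e(t) - D_\nu\psi(t,\mu)$, I would subtract \eqref{eq: mu derivative} from the equation for $\e^{-1}\Delta_\e$ and Taylor-expand $R$ to second order. Since $R$ is a quadratic polynomial in its argument, $R(\psi(s,\mu+\e\nu)) - R(\psi(s,\mu)) = (DR)(\psi(s,\mu))\Delta_\e(s) + \tfrac12\langle \Delta_\e(s), (D^2R)\Delta_\e(s)\rangle$ with $D^2R$ constant (the only second derivatives are $\partial^2 R_i/\partial u_i^2 = \sigma_i^2$). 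Hence $Q_\e$ satisfies
\[
 Q_\e(t) = \int_0^t K(t-s)(DR)(\psi(s,\mu))\,Q_\e(s)\,ds + G_\e(t),
\]
where the remainder $G_\e(t) = \tfrac{1}{2\e}\int_0^t K(t-s)\langle\Delta_\e(s),(D^2R)\Delta_\e(s)\rangle\,ds$ obeys $|G_\e(t)|\le C\int_0^t\|K(t-s)\|_2\,|\Delta_\e(s)|^2\,ds\cdot\e^{-1}$. Using Theorem \ref{Theorem: continuity in mu}, $\|\Delta_\e\|_{L^p([0,T])}\le C(T,p)\e$, and Young's inequality, one gets $\|G_\e\|_{L^{p}([0,T])} \le C\e^{-1}\|K\|_{L^2([0,T])}\,\|\,|\Delta_\e|^2\|_{L^{?}} \le C\e$ after a short Hölder bookkeeping (squaring an $L^p$ bound and controlling the convolution with $\|K\|_{L^2}$ as in Theorem \ref{Theorem: continuity in mu}). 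Then the same linear Volterra Gronwall argument used for $\Delta_\e$ — resolvent of the second kind of $-C\|K(\cdot)\|_2^2$, which is nonpositive by \cite[Proposition 9.8.1]{MR1050319}, followed by \cite[Theorem A.2]{ACLP19} — yields $\|Q_\e\|_{L^p([0,T])}\le C\|G_\e\|_{L^p([0,T])}\le C\e\to 0$.

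The main obstacle I anticipate is purely bookkeeping rather than conceptual: making sure all the $L^q$-exponents in the remainder estimate for $G_\e$ line up, since $|\Delta_\e|^2$ naturally lives in $L^{p/2}$ while we want the convolution $K\ast|\Delta_\e|^2$ in $L^p$, which forces using $\|K\|_{L^2}$ (Young with $1/2+1/p = 1/p' + 1$) and the $L^2$-bound on $\Delta_\e$ from Theorem \ref{Theorem: continuity in mu}, together with the fact that $\psi(\cdot,\mu)\in L^p_{loc}$ is needed so that $(DR)(\psi(\cdot,\mu))$ is a legitimate $L^p_{loc}$ Volterra kernel coefficient for the linear equation \eqref{eq: mu derivative}. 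Everything else — existence for the linear equation, the Taylor expansion (exact since $R$ is quadratic), and the Gronwall closure — is routine given the tools already assembled in Sections 2 and 3.
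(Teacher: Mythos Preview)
Your proposal is correct and follows essentially the same route as the paper: solve the linear Volterra equation for $D_\nu\psi$, subtract to get an equation for the error $Q_\e=\e^{-1}\Delta_\e-D_\nu\psi$, bound the forcing via Theorem~\ref{Theorem: continuity in mu}, and close with the squared Volterra--Gronwall argument based on the resolvent of $-C\|K(\cdot)\|_2^2$. The only cosmetic difference is that you exploit the exact second-order Taylor expansion of the quadratic $R$ (giving remainder $G_\e=\tfrac{1}{2\e}K\ast\langle\Delta_\e,D^2R\,\Delta_\e\rangle$), whereas the paper writes $R(\psi_\e)-R(\psi)$ via the integral mean value $\int_0^1 DR(\tau\psi_\e+(1-\tau)\psi)\,d\tau$ and uses $\|DR(u)-DR(v)\|\le C|u-v|$; after Cauchy--Schwarz both organisations collapse to the same convolution inequality.
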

\begin{proof}
 Note that $G(s) = (DR)(\psi(s,\mu))$ satisfies $G \in L_{loc}^2(\R_+; \C^{m \times m})$. Letting $p(t,x) = x$ and noting that
 $K \ast \nu \in L_{loc}^2(\R_+; \C^{m \times m})$, we may apply \cite[Theorem B.3]{MR4019885} to find a unique solution
 $f \in L_{loc}^2(\R_+; \C^{m})$ of
 $f = K \ast \nu + K \ast (G p(\cdot, f))$.
 By definition, this solution is precisely the unique solution of \eqref{eq: mu derivative}, i.e., $f = D_{\nu}\psi(\cdot, \mu)$.

 It suffices to show that
 $\lim_{\e \to 0}\left\| \Delta_{\e}(\cdot) - D_{\nu}\psi(\cdot, \mu) \right\|_{L^p([0,T])} = 0$,
 where $\Delta_{\e}(t) := \e^{-1}(\psi(t, \mu + \e \nu) - \psi(t,\mu))$. To prove this, first note that
 \begin{align}
    \notag &\ \Delta_{\e}(t) = (K \ast \nu)(t)
    \\ &\qquad + \int_0^t  K(t-s) \left( \int_0^1 (DR)(\tau\psi(s,\mu+\e\nu) + (1-\tau) \psi(s,\mu)) d\tau \right) \Delta_{\e}(s) ds. \label{eq: derivative mu}
 \end{align}
 Hence, by using $\|DR(x)\|_2 \leq C(1 + |x|)$ and $\|DR(u) - DR(v)\|_2 \leq C|u-v|$, we obtain
 \begin{align*}
     &\ \left| \Delta_{\e}(t) - D_{\nu}\psi(t,\mu)\right|
     \\ &\leq \int_0^t \|K(t-s)\|_2 \int_0^1 \| (DR)(\tau\psi(s,\mu+\e\nu) + (1-\tau) \psi(s,\mu))
     \\ &\hskip50mm - (DR)(\psi(s,\mu))\|_2 d\tau |D_{\nu}\psi(s,\mu)| ds
     \\ &\ \ \ + \int_0^t \| K(t-s)\|_2 \int_0^1 \|(DR)(\tau\psi(s,\mu+\e\nu)
     \\ &\hskip50mm + (1-\tau) \psi(s,\mu)) \|_2 d\tau | \Delta_{\e}(s) - D_{\nu}\psi(s,\mu)| ds
     \\ &\leq C \int_0^t \|K(t-s)\|_2 \left| \psi(s, \mu + \e \nu) - \psi(s,\mu) \right| |D_{\nu}\psi(s,\mu)|ds
     \\ &\ \ \ + C \int_0^t \|K(t-s)\|_2 (1 + |\psi(s,\mu + \e \nu)| + |\psi(s,\mu)|) \left| \Delta_{\e}(s) - D_{\nu}\psi(s,\mu)\right|ds.
    \end{align*}
    After a short computation we obtain
    \begin{align*}
        &\ |\Delta_{\e}(t) - D_{\nu}\psi(t,\mu)|^2
        \\ &\leq \| \psi(\cdot, \mu + \e \nu) - \psi(\cdot, \mu)\|_{L^2([0,T])}^2f(t) + \int_0^t k(t-s) | \Delta_{\e}(s) - D_{\nu}\psi(s,\mu)|^2 ds
    \end{align*}
    with
    \[
     k(t) = 8C\left( 1+ \| \psi(\cdot,\mu)\|_{L^2([0,T])}^2 + \sup_{\e \in (0,1)}\| \psi(\cdot,\mu + \e \nu)\|_{L^2([0,T])}^2 \right)\|K(t)\|_2^2
    \]
    and $f(t) = 2C \int_0^t \|K(t-s)\|_2^2 |D_{\nu}\psi(s,\mu)|^2 ds$.
    Let $\widetilde{R} \in L_{loc}^1(\R_+; \R)$ be the resolvent of the second kind for $k$. Arguing as in the proof of Theorem \ref{Theorem: continuity in mu} gives
    \begin{align*}
        |\Delta_{\e}(t) - D_{\nu}\psi(t,\mu)|^2
        &\leq \| \psi(\cdot, \mu + \e \nu) - \psi(\cdot, \mu)\|_{L^2([0,T])}^2f(t)
        \\ &\ + \| \psi(\cdot, \mu + \e \nu) - \psi(\cdot, \mu)\|_{L^2([0,T])}^2\int_0^t |\widetilde{R}(t-s)|f(s)ds.
    \end{align*}
    Hence we obtain
    \begin{align*}
       &\ \| \Delta_{\e} - D_{\nu}\psi(\cdot,\mu)\|_{L^p([0,T])}
      \\ &\ \leq C \| \psi(\cdot, \mu + \e \nu) - \psi(\cdot, \mu)\|_{L^2([0,T])} \left( \| f\|_{L^{p/2}([0,T])}^{1/2} + \| |\widetilde{R}| \ast f \|_{L^{p/2}([0,T])}^{1/2}\right)
      \\ &\ \leq C \| \psi(\cdot, \mu + \e \nu) - \psi(\cdot, \mu)\|_{L^2([0,T])}
      \\&\qquad \qquad \cdot \left( \| K\|_{L^p([0,T])} \| D_{\nu}\psi(\cdot, \mu)\|_{L^2([0,T])} + \| \widetilde{R}\|_{L^1([0,T])}^{1/2} \| f\|_{L^{p/2}([0,T])}^{1/2}\right)
    \end{align*}
    The assertion now follows from Theorem \ref{Theorem: continuity in mu}.
\end{proof}

\begin{Remark}
 By inspection of the proof one can see that the above results hold true for $\mu = u \delta_0$ with $u \in \C_-^m$ without the additional regularity assumption $[K]_{\eta,p,T} < \infty$.
\end{Remark}

\begin{Corollary}\label{corr: first moment}
 Let $(b,\beta, \sigma, K)$ be admissible parameters. Let $X$ be the Volterra square-root process with admissible parameters $(b,\beta, \sigma, K)$ and initial state $x_0\in \R_+^m$. Then
 \[
  \E[ X_t ] = \left( I_m + \int_0^t (E_{\beta^{\top}}(s))^{\top}\beta ds \right)x_0 + \left( \int_0^t (E_{\beta^{\top}}(s))^{\top}ds \right) b.
 \]
\end{Corollary}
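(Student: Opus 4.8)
The plan is to read off $\E[X_t]$ by differentiating, in its measure argument and at the origin, the extended exponential--affine formula of Corollary~\ref{Theorem: affine riccati extension}. Fix $t>0$ and $i\in\{1,\dots,m\}$, and for $\e\in[0,1]$ put $\mu_\e:=-\e e_i\delta_0\in\mathcal{M}_{lf}^-$; note that $\psi(\cdot,\mu_\e)$ is simply the solution $\psi(\cdot,-\e e_i,0)$ of \eqref{eq: riccati}. Since $\int_{[0,t]}\langle X_{t-s},\mu_\e(ds)\rangle=-\e X_{i,t}$ and $\langle x_0,\mu_\e([0,t])\rangle=-\e x_{0,i}$, Corollary~\ref{Theorem: affine riccati extension} reads
\[
 \E\!\left[e^{-\e X_{i,t}}\right]=\exp\Big\{-\e x_{0,i}+\int_0^t\langle x_0,R(\psi(s,\mu_\e))\rangle\,ds+\int_0^t\langle b,\psi(s,\mu_\e)\rangle\,ds\Big\}.
\]
Because $X_{i,t}\ge0$ and $\E[X_{i,t}]<\infty$ (global moment bounds for $X$), dominated convergence lets us differentiate the left-hand side at $\e=0^+$, which gives $-\E[X_{i,t}]$.

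For the right-hand side, observe that $\psi(\cdot,0)\equiv0$ is the unique $L_{loc}^2(\R_+;\C_-^m)$-solution of \eqref{eq: riccati extension} with $\mu=0$, so the exponent vanishes at $\e=0$ and $DR(\psi(\cdot,0))=DR(0)=\beta^\top$. Writing $\nu:=-e_i\delta_0$, the continuity and differentiability results of the preceding subsection --- which, as noted in the Remark following them, remain valid for base measures of the form $u\delta_0$ (here $u=0$) \emph{without} the assumption $[K]_{\eta,p,T}<\infty$ --- show that $\phi_i:=D_{\nu}\psi(\cdot,0)=\lim_{\e\to0}\e^{-1}\bigl(\psi(\cdot,\mu_\e)-\psi(\cdot,0)\bigr)$ exists in $L^p([0,T];\C^m)$, and that by \eqref{eq: mu derivative} specialized to $\psi(\cdot,0)\equiv0$ it solves the \emph{linear} Volterra equation
\[
 \phi_i(t)=-K(t)e_i+\int_0^tK(t-s)\beta^\top\phi_i(s)\,ds,\qquad\text{i.e.}\qquad \phi_i=K\ast\nu+(K\beta^\top)\ast\phi_i .
\]
By the resolvent theory recalled in Section~2 with $B=\beta^\top$, this has the unique solution $\phi_i=E_{\beta^\top}\ast\nu$, so $\phi_i(s)=-E_{\beta^\top}(s)e_i$ for a.e.\ $s$.

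Differentiating the exponent at $\e=0^+$ --- the interchange of $\frac{d}{d\e}$ with $\int_0^t(\cdot)\,ds$ being justified by the same Cauchy--Schwarz estimates (based on \eqref{esti: R}) used in the proofs of this section, together with the $L^p([0,t])$-convergence of the difference quotients and the uniform bound $\|\psi(\cdot,\mu_\e)\|_{L^2([0,t])}\le\e\,\|E_{\beta^\top}\|_{L^2([0,t])}$ from Lemma~\ref{lemma: local integrability psi} (which also yields $\psi(\cdot,\mu_\e)\to0$) --- gives, since the exponent vanishes at $\e=0$,
\[
 -\E[X_{i,t}]=-x_{0,i}+\int_0^t\langle x_0,\beta^\top\phi_i(s)\rangle\,ds+\int_0^t\langle b,\phi_i(s)\rangle\,ds .
\]
Substituting $\phi_i(s)=-E_{\beta^\top}(s)e_i$, using the identities $\langle b,Me_i\rangle=(M^\top b)_i$ and $\langle x_0,\beta^\top Me_i\rangle=(M^\top\beta x_0)_i$ for $M\in\C^{m\times m}$, and letting $i$ range over $\{1,\dots,m\}$, we obtain
\[
 \E[X_t]=\Bigl(I_m+\int_0^t(E_{\beta^\top}(s))^\top\beta\,ds\Bigr)x_0+\Bigl(\int_0^t(E_{\beta^\top}(s))^\top\,ds\Bigr)b ,
\]
which is the assertion (the case $t=0$ being trivial).

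The argument is almost entirely bookkeeping; the two points requiring care are the interchange-of-limit steps (differentiation under $\E$ and under $\int_0^t ds$, both routine given the moment bounds for $X$ and the $L^p$/$L^2$ estimates for $\psi$) and the fact that the measure-differentiability machinery is being invoked in the absence of the hypothesis $[K]_{\eta,p,T}<\infty$, which is exactly what the Remark supplies. As an alternative one could take expectations directly in \eqref{eq: VCIR}: the stochastic integral has mean zero for fixed $t$, so $u(t):=\E[X_t]$ solves $u(t)=x_0+\int_0^tK(t-s)b\,ds+\int_0^tK(t-s)\beta u(s)\,ds$, whence $u=h-R_\beta\ast h$ with $h(t)=x_0+\int_0^tK(t-s)b\,ds$; comparing this expression with the formula above reproves the symmetry relations $R_\beta=(R_{\beta^\top})^\top$ and $E_\beta=(E_{\beta^\top})^\top$ announced before Lemma~\ref{lemma: local integrability psi}.
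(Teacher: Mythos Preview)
Your proof is correct and follows essentially the same route as the paper's: differentiate the exponential--affine formula at the origin, identify the derivative $D_\nu\psi(\cdot,0)$ as the solution of the linear Volterra equation $\phi=K\ast\nu+(K\beta^\top)\ast\phi$, and solve it via $E_{\beta^\top}$. The only cosmetic difference is that the paper works with a general direction $u\in\C_-^m$ rather than coordinate-by-coordinate, and invokes the original formula \eqref{eq: fourier transform} directly (so the detour through Corollary~\ref{Theorem: affine riccati extension} and the Remark is unnecessary, though harmless).
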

\begin{proof}
 Taking $\mu = u \delta_0$ with $u\in \C_-^m$ and then noting $\psi(s,0) = 0$, we find for $D_{u}\psi(t,0) := D_{\mu}\psi(t,0)$
 \begin{align*}
  D_{u}\psi(t,0) &= \int_{[0,t)}K(t-s)\mu(ds) + \int_0^t K(t-s)(DR)(\psi(s,0))D_{u}\psi(s,0)ds
  \\ &= K(t)u + \int_0^t K(t-s)\beta^{\top} D_{u}\psi(s,0)ds,
 \end{align*}
 where we have used $DR(0) = \beta^{\top}$. Solving this linear Volterra equation gives $D_{\mu}\psi(t,0) = (E_{\beta^{\top}} \ast \mu) (t) = E_{\beta^{\top}}(t)u$. Hence we obtain
 \begin{align*}
     \E[ \langle u, X_t \rangle ]
     &= \frac{d}{d\e}|_{\e = 0} \E\left[ e^{ \langle X_t, \e u \rangle} \right]
\\ &= \langle u, x_0 \rangle + \int_0^t \langle x_0, (DR)(0) D_u\psi(s,0) \rangle ds + \int_0^t \langle b, D_u\psi(s,0) \rangle ds
\\ &= \langle u, x_0 \rangle + \int_0^t \langle (E_{\beta^{\top}}(s))^{\top}\beta x_0, u \rangle ds + \int_0^t \langle (E_{\beta^{\top}}(s))^{\top}b, u \rangle ds.
 \end{align*}
 Since $u$ is arbitrary, the assertion is proved.

\end{proof}

\section{Uniform moment and H\"older bounds}

\subsection{Uniform moment bounds}
Let $X$ be the Volterra square-root process with admissible parameters $(b,\beta, \sigma, K)$ and initial state $x_0$. In this section we prove uniform in time moment bounds on the process $X$ which extend \cite[Lemma 3.1]{ACLP19} where similar bounds have been obtained on finite time intervals $[0,T]$.

First observe that after taking expectations in \eqref{eq: VCIR} we arrive at a convolution equation for $\E[X_t]$ which has the unique solution
\begin{align}\label{eq: first moment}
 \E[X_t] = \left(I_m - \int_0^t R_{\beta}(s) ds\right)x_0 + \left(\int_0^t E_{\beta}(s) ds\right)b,
\end{align}
where $R_{\beta}, E_{\beta}$ are respectively defined by \eqref{eq: def Rbeta} and \eqref{eq: def Ebeta} with $B=\beta$, compare with \cite[Lemma 4.1]{ACLP19}.
\begin{Lemma}
  Let $(b,\beta, \sigma, K)$ be admissible parameters, and let $X$ be the Volterra square-root process with admissible parameters $(b,\beta, \sigma, K)$. Then $R_{\beta} = (E_{\beta^{\top}})^{\top}(-\beta)$ and $E_{\beta} = (E_{\beta^{\top}})^{\top}$.
\end{Lemma}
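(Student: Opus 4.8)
The statement is a pure identity about resolvents, so the cleanest route is to exploit the defining relations of $R_\beta$ and $E_\beta$ together with the fact that transposition is an anti-automorphism of the convolution algebra. Recall that, by \eqref{eq: def Rbeta} with $B = \beta$, the resolvent $R_\beta \in L^1_{\mathrm{loc}}$ is the unique solution of $R_\beta \ast K_\beta = K_\beta \ast R_\beta = K_\beta - R_\beta$ with $K_\beta(t) = -K(t)\beta$; and likewise $R_{\beta^\top}$ is the unique solution of the same equation with $K_{\beta^\top}(t) = -K(t)\beta^\top$ in place of $K_\beta(t)$. The plan is to transpose the $R_{\beta^\top}$-equation and check that $(R_{\beta^\top})^\top(-\beta)$ — or rather $(E_{\beta^\top})^\top(-\beta)$, once the relation $E_{\beta^\top}(-\beta^\top) = R_{\beta^\top}$ from the line after \eqref{eq: def Ebeta} is used — solves the defining equation for $R_\beta$, and then invoke uniqueness.

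Concretely, I would proceed as follows. First, since $K$ is diagonal, $K^\top = K$, so $K_{\beta^\top}(t)^\top = -\beta\, K(t)^\top = -\beta K(t)$. Using $(f \ast g)^\top = g^\top \ast f^\top$ for matrix-valued convolutions (immediate from the definition, since $(f(t-s)g(s))^\top = g(s)^\top f(t-s)^\top$ and one relabels), transpose the identity $R_{\beta^\top} \ast K_{\beta^\top} = K_{\beta^\top} - R_{\beta^\top}$ to obtain $K_{\beta^\top}^\top \ast R_{\beta^\top}^\top = K_{\beta^\top}^\top - R_{\beta^\top}^\top$; do the same with the other equality. Now I want to relate $K_{\beta^\top}^\top(t) = -\beta K(t)$ to $K_\beta(t) = -K(t)\beta$. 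These are not equal in general, so the naive transpose of $R_{\beta^\top}$ is not quite $R_\beta$; the right object is a conjugated version. The standard fix is to post-multiply: set $S(t) := \beta^{-1} R_{\beta^\top}(t)^\top \beta$ when $\beta$ is invertible (and then remove the invertibility assumption by a density/continuity argument, or avoid it altogether via the $E$-formulation). Then a short computation using bilinearity of convolution and $\beta^{-1}K_{\beta^\top}^\top(t)\beta = \beta^{-1}(-\beta K(t))\beta = -K(t)\beta = K_\beta(t)$ shows $S$ solves the defining equations of $R_\beta$, hence $S = R_\beta$ by uniqueness.

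To convert this into the stated form, use $E_{\beta^\top} = K - R_{\beta^\top} \ast K$ and transpose: $(E_{\beta^\top})^\top = K - K \ast R_{\beta^\top}^\top$. Combining with the identity $R_\beta = \beta^{-1}R_{\beta^\top}^\top\beta$ and the relation $E_{\beta^\top}(-\beta^\top) = R_{\beta^\top}$ (so $R_{\beta^\top}^\top = (-\beta)(E_{\beta^\top})^\top$), one gets $R_\beta = \beta^{-1}(-\beta)(E_{\beta^\top})^\top \beta = -(E_{\beta^\top})^\top\beta = (E_{\beta^\top})^\top(-\beta)$, which is the first claimed identity; and then $E_\beta = K - R_\beta \ast K = K - (E_{\beta^\top})^\top(-\beta) \ast K$, while transposing $E_{\beta^\top} = K - R_{\beta^\top}\ast K$ and using $R_{\beta^\top} = E_{\beta^\top}(-\beta^\top)$ gives $(E_{\beta^\top})^\top = K - K\ast R_{\beta^\top}^\top = K + K \ast \beta (E_{\beta^\top})^\top$, i.e.\ $(E_{\beta^\top})^\top$ satisfies the same linear Volterra equation $Y = K + (K\beta)\ast Y$ that $E_\beta$ does (note $K\beta \ast (E_{\beta^\top})^\top$ versus $K \ast \beta(E_{\beta^\top})^\top$ agree since $K$ is scalar-diagonal so $K(t-s)\beta = \beta$ only if $\beta$ diagonal — here one must be careful and instead verify $E_\beta = (E_{\beta^\top})^\top$ directly from $E_\beta(-\beta) = R_\beta = (E_{\beta^\top})^\top(-\beta)$ and invertibility of $\beta$, or from the defining relation). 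I expect the main technical nuisance to be exactly this bookkeeping with left- versus right-multiplication by $\beta$ and the removal of the invertibility hypothesis; the latter is handled by approximating $\beta$ by $\beta - \varepsilon I$, which is invertible for all but finitely many $\varepsilon$, using that $\beta \mapsto R_\beta$ is continuous in $L^1([0,T])$ (e.g.\ by \cite[Theorem 2.3.1]{MR1050319}, as already invoked in the proof of Theorem \ref{Theorem: stability psi}), and passing to the limit $\varepsilon \to 0$.
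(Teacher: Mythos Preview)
Your approach is correct and genuinely different from the paper's. The paper proves the lemma \emph{indirectly}, by computing $\E[X_t]$ in two ways: once via Corollary \ref{corr: first moment} (differentiating the affine formula through the Riccati equation, which produces $E_{\beta^\top}$), and once via \eqref{eq: first moment} (solving the convolution equation obtained by taking expectations in \eqref{eq: VCIR}, which produces $R_\beta,E_\beta$). Equating the two expressions for all $x_0,b\in\R_+^m$, using that the cone is generating, and differentiating in $t$ yields $E_\beta = (E_{\beta^\top})^\top$; the $R_\beta$ identity then follows from $R_\beta = E_\beta(-\beta)$. The authors in fact remark immediately after the proof that a direct derivation from the resolvent definitions ``seems natural'' but that they ``have not succeeded in this way.''

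Your route is precisely that direct derivation. The key algebraic step---conjugating the transposed resolvent equation by $\beta$ to carry $K_{\beta^\top}^\top = -\beta K$ to $K_\beta = -K\beta$, so that $S := \beta^{-1}(R_{\beta^\top})^\top\beta$ satisfies the defining equation of $R_\beta$ and hence equals it by uniqueness---is clean, and the removal of the invertibility hypothesis via $\beta\mapsto\beta-\varepsilon I$ together with continuity of the resolvent map (Gripenberg--Londen--Staffans, Theorem~2.3.1) is standard and works. Compared to the paper, your argument is self-contained at the level of Volterra resolvent theory and requires none of the probabilistic machinery (the process $X$, the affine formula, differentiability of $\psi$); the paper's argument, on the other hand, avoids the conjugation/approximation manoeuvre and instead leverages structure that has already been built for other purposes. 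Your exposition wobbles a bit near the end (the detour through $(E_{\beta^\top})^\top = K + K\ast\beta(E_{\beta^\top})^\top$ is unnecessary once you have $R_\beta = (E_{\beta^\top})^\top(-\beta)$: just combine with $R_\beta = E_\beta(-\beta)$ and cancel $-\beta$ in the invertible case), but the mathematics is sound.
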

\begin{proof}
 Using Corollary \ref{corr: first moment} and \eqref{eq: first moment} gives
 \begin{align*}
  &\ \left(I_m - \int_0^t R_{\beta}(s) ds\right)x_0 + \left(\int_0^t E_{\beta}(s) ds\right)b
  \\ &= \left( I_m + \int_0^t (E_{\beta^{\top}}(s))^{\top}\beta ds \right)x_0 + \left( \int_0^t (E_{\beta^{\top}}(s))^{\top}ds \right) b.
 \end{align*}
 Letting $x_0 = 0$ gives for each $b \in \R_+^m$
 \[
  \left(\int_0^t E_{\beta}(s) ds\right)b
  = \left(\int_0^t (E_{\beta^{\top}}(s))^{\top}ds \right) b.
 \]
 Since $b$ is arbitrary and the cone $\R_+^m$ is generating (that is $\R^m = \R_+^m - \R_+^m$), we conclude $\int_0^t E_{\beta}(s) ds
  = \int_0^t (E_{\beta^{\top}}(s))^{\top}ds$.
 Taking now the derivative in $t$ and noting that the integrands are continuous on $(0,\infty)$ because $K$ is continuous, yields
 $E_{\beta}(t) = (E_{\beta^{\top}}(t))^{\top}$ for all $t > 0$. This proves the second identity. The first identity follows from
 $(E_{\beta^{\top}}(t))^{\top}(-\beta) = E_{\beta}(t)(-\beta) = R_{\beta}(t)$. This proves the assertion.
\end{proof}
It seems natural, that the above relations may also be  derived directly from the definition of $R_{\beta}, E_{\beta}$. However, we have not succeeded in this way. The above relations provide the following observation used throughout this section.
\begin{Remark}
Let $p\in [1,\infty]$. One has $E_{\beta} \in L^p(\R_+; \R^{m \times m})$ if and only if $E_{\beta^{\top}} \in L^p(\R_+; \R^{m \times m})$.
\end{Remark}
The next lemma shows that integrability of $E_{\beta}$ is sufficient for the boundedness of the first moment.
\begin{Lemma}\label{lemma: first moment}
 Let $X$ be the Volterra square-root process with admissible parameters $(b,\beta, \sigma, K)$ and initial state $x_0$. Then
\begin{align}
\left(I_{m}-\int_{0}^{t}R_{\beta}(s)ds\right)v\in\R_{+}^{m}\quad\mbox{and}\quad\left(\int_{0}^{t}E_{\beta}(s)ds\right)v\in\R_{+}^{m}\label{eq: cone invariance}
\end{align}
 holds for any $v\in\R_{+}^{m}$. Moreover, if $E_{\beta} \in L^1(\R_+;\R^{m \times m})$, then
 \begin{align*}
  \lim_{t \to \infty} \E[ X_t ] = \left( I_m - \int_0^{\infty} R_{\beta}(s)ds \right) x_0 + \left( \int_0^{\infty} E_{\beta}(s)ds \right) b
 \end{align*}
 and
 \[
  \sup_{t \geq 0}\E[ |X_t| ] \leq C_{\beta}\max\{|x_0|, |b|\},
 \]
 where
 \begin{equation}\label{eq: defi Cbeta}
 C_{\beta} := (1 + \|R_{\beta}\|_{L^1})\sqrt{m} + \|E_{\beta}\|_{L^1}.
 \end{equation}
\end{Lemma}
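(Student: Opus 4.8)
Everything will be read off the explicit first-moment formula \eqref{eq: first moment} for $\E[X_t]$, combined with two elementary inputs: the process takes values in $\R_+^m$, so $\E[X_t]\in\R_+^m$ for every $t$; and the admissibility of $(b,\beta,\sigma,K)$ is unaffected when $b$ (or $x_0$) is replaced by an arbitrary element of $\R_+^m$, since only condition (i) constrains $b$.

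\emph{Cone invariance \eqref{eq: cone invariance}.} Fix $v\in\R_+^m$. Applying \eqref{eq: first moment} to the Volterra square-root process with admissible parameters $(0,\beta,\sigma,K)$ and initial state $v$, the left-hand side lies in $\R_+^m$ because the process does, so $\left(I_m-\int_0^tR_\beta(s)ds\right)v\in\R_+^m$. Applying \eqref{eq: first moment} instead to the process with parameters $(v,\beta,\sigma,K)$ and initial state $0$ gives $\left(\int_0^tE_\beta(s)ds\right)v\in\R_+^m$. Since $v\in\R_+^m$ was arbitrary, \eqref{eq: cone invariance} follows; in particular $\int_0^tE_\beta(s)ds$ has nonnegative entries for each $t\ge0$.

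\emph{Convergence and uniform bound.} Assume $E_\beta\in L^1(\R_+;\R^{m\times m})$. From the identity $R_\beta=E_\beta(-\beta)$ established in the foregoing lemma one gets $\|R_\beta(t)\|_2\le\|\beta\|_2\|E_\beta(t)\|_2$, hence $R_\beta\in L^1$ as well, so $\|R_\beta\|_{L^1}<\infty$ and $C_\beta$ in \eqref{eq: defi Cbeta} is finite. Absolute integrability then makes the improper integrals $\int_0^tR_\beta(s)ds$ and $\int_0^tE_\beta(s)ds$ convergent in $\R^{m\times m}$ as $t\to\infty$, and passing to the limit in \eqref{eq: first moment} yields the asserted formula for $\lim_{t\to\infty}\E[X_t]$. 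For the uniform bound, note that $X_t\in\R_+^m$ a.s.\ gives $|X_t|\le|X_t|_1=\sum_{i=1}^mX_{i,t}$, hence $\E[|X_t|]\le\sum_{i=1}^m\E[X_{i,t}]=\langle\mathbf 1,\E[X_t]\rangle$ with $\mathbf 1=(1,\dots,1)^\top$. Inserting \eqref{eq: first moment} and estimating each term by Cauchy--Schwarz together with $\left\|\int_0^tR_\beta(s)ds\right\|_2\le\|R_\beta\|_{L^1}$ and $\left\|\int_0^tE_\beta(s)ds\right\|_2\le\|E_\beta\|_{L^1}$ (using \eqref{eq: cone invariance} to keep the convolution terms in the cone, so that the contribution $\langle\mathbf 1,x_0\rangle\le\sqrt m\,|x_0|$ is the only source of the factor $\sqrt m$ on the $R_\beta$-part) gives $\sup_{t\ge0}\E[|X_t|]\le C_\beta\max\{|x_0|,|b|\}$ with $C_\beta$ as in \eqref{eq: defi Cbeta}.

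\emph{Main point.} The argument is mostly bookkeeping; the one thing that requires care is finiteness of $C_\beta$, which is not immediate from the hypothesis $E_\beta\in L^1$ alone but follows from the relation $R_\beta=E_\beta(-\beta)$ proved earlier. It is also worth stressing that the cone invariance \eqref{eq: cone invariance}, although a purely analytic statement about the resolvents $R_\beta,E_\beta$, is obtained here as a by-product of the probabilistic fact that $X$ stays in $\R_+^m$; a direct derivation from the defining relations \eqref{eq: def Rbeta}--\eqref{eq: def Ebeta} does not seem available.
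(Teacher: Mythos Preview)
Your argument is essentially the paper's: deduce \eqref{eq: cone invariance} by specializing \eqref{eq: first moment} to $b=0$ (resp.\ $x_0=0$), pull $R_\beta\in L^1$ from $R_\beta=E_\beta(-\beta)$ to pass to the limit, and bound $\E[|X_t|]$ via $|X_t|\le\sum_i X_{i,t}$ together with \eqref{eq: first moment}. One minor point: your parenthetical claim that cone invariance lets the $E_\beta$--term avoid the factor $\sqrt m$ is not actually justified (Cauchy--Schwarz on $\langle\mathbf 1,(\int_0^tE_\beta)b\rangle$ still produces $\sqrt m\,\|E_\beta\|_{L^1}|b|$), and the paper's own displayed chain has the same feature; since only finiteness of the bound is ever used downstream, this is harmless.
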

\begin{proof}
 Note that \eqref{eq: first moment} holds for all $x_0,b \in \R_+^m$. Taking $b = 0$ shows that $I_m - \int_0^t R_{\beta}(s)ds$ leaves $\R_+^m$ invariant. Taking $x_0 = 0$ shows that $\int_0^t E_{\beta}(s)ds$ leaves $\R_+^m$ invariant. If $E_{\beta}$ is integrable, then using $E_{\beta}(-\beta) = R_{\beta}$, we find that also $R_{\beta}$ is integrable and hence we can pass to the limit $t \to \infty$ in \eqref{eq: first moment}. This proves the desired convergence of the first moment. The last assertion then follows from
  \begin{align*}
     \E[ |X_t| ] \leq  \E\left[ \sum_{i=1}^m X_{i,t} \right]
      \leq \sqrt{m} | \E[ X_t]|
     \leq \sqrt{m} (1 + \| R_{\beta}\|_{L^1})|x_0| + \| E_{\beta} \|_{L^1} |b|.
 \end{align*}
\end{proof}
\begin{Remark}
 If $m = 1$ and $\beta < 0$, then using $E_{\beta} = R_{\beta}(-\beta)^{-1}$ combined with \eqref{eq: cone invariance} implies that $0 \leq \int_0^t R_{\beta}(s)ds \leq 1$ and hence $0 \leq \E[ X_{t} ] \leq x_0 + \frac{b}{|\beta|}$ for $t \geq 0$.
\end{Remark}

To prove the uniform boundedness of higher order moments, we use \cite[Lemma 2.5]{MR4019885} to see that \eqref{eq: VCIR} is equivalent to
\begin{align}\label{eq: VCIR 1}
 X_{t} = \left(I_m - \int_0^t R_{\beta}(s) ds\right)x_0 + \left(\int_0^t E_{\beta}(s) ds\right)b + \int_0^t E_{\beta}(t-s) \sigma(X_s)dB_{s}.
\end{align}
Based on this equivalent representation for the Volterra square-root process, we obtain the following simple observation.

\begin{Lemma}\label{lemma: bounded moments}
 Let $X$ be the Volterra square-root process with admissible parameters $(b,\beta, \sigma, K)$ and initial state $x_0 \in \R_+^m$. If $E_{\beta} \in L^1(\R_+; \R^{m \times m}) \cap L^2(\R_+; \R^{m\times m})$, then
 \begin{align*}
  \sup_{t \geq 0}\E\left[|X_t|^p\right] \leq C < \infty, \qquad \forall p \geq 2,
 \end{align*}
  where $C = C(p, m,x_0, b,\|R_{\beta}\|_{L^1(\R_+)}, \|E_{\beta}\|_{L^1(\R_+)}, \|E_{\beta}\|_{L^2(\R_+)}, \sigma)$ is some constant.
\end{Lemma}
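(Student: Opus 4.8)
The plan is to work with the equivalent representation \eqref{eq: VCIR 1}. The drift part is harmless: by Lemma \ref{lemma: first moment} the matrices $I_m-\int_0^t R_{\beta}(s)ds$ and $\int_0^t E_{\beta}(s)ds$ leave $\R_+^m$ invariant, and since $E_{\beta}\in L^1$ forces $R_{\beta}=E_{\beta}(-\beta)\in L^1$, both are bounded uniformly in $t$; hence the deterministic term in \eqref{eq: VCIR 1} is bounded by a constant depending only on $x_0,b,\|R_{\beta}\|_{L^1(\R_+)},\|E_{\beta}\|_{L^1(\R_+)}$. It thus remains to control the stochastic convolution $M_t:=\int_0^t E_{\beta}(t-s)\sigma(X_s)dB_s$.

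The key observation is that although $t\mapsto M_t$ is not a martingale, for each fixed $t$ the process $s\mapsto N^t_s:=\int_0^s E_{\beta}(t-r)\sigma(X_r)dB_r$, $s\in[0,t]$, is a (vector-valued) martingale with $N^t_t=M_t$, and its bracket is $\int_0^s\|E_{\beta}(t-r)\sigma(X_r)\|_{HS}^2dr$. Applying the Burkholder--Davis--Gundy inequality to $N^t$ together with the elementary bounds $\|E_{\beta}(t-r)\sigma(X_r)\|_{HS}^2\le\|E_{\beta}(t-r)\|_{HS}^2\|\sigma(X_r)\|_{HS}^2$ and $\|\sigma(X_r)\|_{HS}^2=\sum_{i=1}^m\sigma_i^2X_{i,r}\le C_{\sigma}|X_r|$, valid since $X_r\in\R_+^m$, gives for $p\ge2$
\[
 \E[|M_t|^p]\le c_p\,\E\Big[\Big(\int_0^t\|E_{\beta}(t-r)\|_{HS}^2\,C_{\sigma}|X_r|\,dr\Big)^{p/2}\Big].
\]
Now apply Jensen's inequality with respect to the finite measure $\nu_t(dr)=\|E_{\beta}(t-r)\|_{HS}^2dr$ on $[0,t]$, whose total mass is at most $a:=\|E_{\beta}\|_{L^2(\R_+)}^2$ --- this is where the hypothesis $E_{\beta}\in L^2$ enters. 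This yields
\[
 \Big(\int_0^t\|E_{\beta}(t-r)\|_{HS}^2|X_r|dr\Big)^{p/2}\le a^{p/2-1}\int_0^t\|E_{\beta}(t-r)\|_{HS}^2|X_r|^{p/2}dr,
\]
so that, taking expectations and using Fubini's theorem, $\E[|M_t|^p]\le c_pC_{\sigma}^{p/2}a^{p/2}\sup_{s\ge0}\E[|X_s|^{p/2}]$, a bound uniform in $t$.

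Combining the deterministic and stochastic contributions there is a constant $C_p$, depending only on the quantities listed in the statement, with
\[
 \sup_{t\ge0}\E[|X_t|^p]\le C_p\Big(1+\sup_{s\ge0}\E[|X_s|^{p/2}]\Big),\qquad p\ge2.
\]
The proof is then finished by a bootstrap. For $p=2$ the right-hand side involves only $\sup_{s\ge0}\E[|X_s|]$, which is finite by Lemma \ref{lemma: first moment}, so $\sup_{t\ge0}\E[|X_t|^2]<\infty$; since $L^q$-norms are non-decreasing in $q$ on a probability space, this also gives $\sup_{t\ge0}\E[|X_t|^q]<\infty$ for $q\in[1,2]$. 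Assuming inductively that $\sup_{t\ge0}\E[|X_t|^q]<\infty$ for all $q\in[1,P]$ with $P\ge2$, the displayed inequality applied with $p\in(P,2P]$ (so that $p/2\in[1,P]$) extends the bound to all of $[1,2P]$, and the induction exhausts $p\ge2$. Throughout, the finiteness of $\E[|X_t|^p]$ for each fixed $t$, needed to legitimise the BDG step, is guaranteed by $\sup_{t\in[0,T]}\E[|X_t|^p]<\infty$ for every $T>0$, which holds by \cite{MR4019885}. The only genuinely delicate point is the passage from what would otherwise be a Gronwall-type estimate --- giving mere growth in $t$ --- to a bound uniform in $t$; this is precisely what the Jensen step against the $L^2$-kernel $\nu_t$ achieves, and it explains why the integrability of $E_{\beta}$ is assumed simultaneously in $L^1$ and $L^2$.
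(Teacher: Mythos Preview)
Your proposal is correct and follows essentially the same route as the paper: work with the representation \eqref{eq: VCIR 1}, bound the deterministic part via $E_{\beta},R_{\beta}\in L^1$, control the stochastic convolution by BDG plus Jensen against the finite measure $\|E_{\beta}(t-r)\|_{HS}^2\,dr$ (using $E_{\beta}\in L^2$), and then bootstrap the resulting halving recursion $\sup_t\E[|X_t|^p]\le C_p(1+\sup_t\E[|X_t|^{p/2}])$ down to the first-moment bound of Lemma \ref{lemma: first moment}. Your explicit remark that the a priori finiteness of $\sup_{t\in[0,T]}\E[|X_t|^p]$ from \cite{MR4019885} is needed to justify the BDG step is a welcome clarification that the paper leaves implicit.
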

\begin{proof}
 Note that
 \begin{align*}
     \E[|X_t|^p] &\leq 2^{p-1}\left(1 + \int_0^{\infty}\|R_{\beta}(s)\|_2 ds\right)^p|x_0|^p
     + 2^{p-1}\left(\int_0^{\infty} \|E_{\beta}(s)\|_2ds\right)^p |b|^p
     \\ &\ \ \ + c  2^{p-1}\E\left[\left( \int_0^t \| E_{\beta}(t-s)\|_{HS}^2 \| \sigma(X_s)\|_{HS}^2 ds \right)^{p/2} \right],
 \end{align*}
 where $c>0$ is a constant only depending on $m,p$, and the first two terms are finite since $E_{\beta}$ (and hence $R_{\beta})$ is integrable over $\R_+$. For the last term we obtain with $\sigma^* = \max\{\sigma_1^2, \dots, \sigma_m^2\}$
 \begin{align*}
     &\ \E\left[\left( \int_0^t \| E_{\beta}(t-s)\|_{HS}^2 \| \sigma(X_s)\|_{HS}^2 ds \right)^{p/2} \right]
     \\ &\leq (\sqrt{m}\sigma^*)^{p/2} \E\left[\left( \int_0^t \| E_{\beta}(t-s)\|_{HS}^2 |X_s| ds \right)^{p/2} \right]
     \\ &\leq (\sqrt{m}\sigma^*)^{p/2} \left( \int_0^t \|E_{\beta}(t-s)\|_{HS}^2 ds \right)^{p/2 - 1} \int_0^t \| E_{\beta}(t-s)\|_{HS}^2 \E[|X_s|^{p/2}] ds
     \\ &\leq (\sqrt{m}\sigma^*)^{p/2} \sup_{t \geq 0}\E[|X_t|^{p/2}] \left(  \int_0^{\infty}\|E_{\beta}(s)\|_{HS}^2 ds \right)^{p/2},
 \end{align*}
 where we have used $\|\sigma(x)\|_{HS}^2=\sum_{k=1}^m \sigma_k^2 x_k \le \sigma^* \sqrt{m}|x|$ and Jensen's inequality. Thus we have shown that
 \[
 1+\sup_{t \geq 0}\E[|X_t|^p] \leq C(p) (1+\sup_{t \geq 0}\E[|X_t|^{p/2}]),
 \]
 where $C(p)>0$ is a constant.
 Letting $n \in \N$ be the smallest integer with $p/2^{n} \leq 2$ gives
 \begin{align*}
     1+\sup_{t \geq 0}\E[|X_t|^p] &\leq \prod_{k=0}^{n-1}C\left(\frac{p}{2^k}\right) (1+\sup_{t \geq 0}\E[|X_t|^{p/2^n}])
     \\ &\leq \prod_{k=0}^{n-1}C\left(\frac{p}{2^k}\right) \left(1+\sup_{t \geq 0}\left(\E[|X_t|^{2}]\right)^{\frac{p}{2^{n+1}}}\right)
     \\ &\leq \prod_{k=0}^{n-1}C\left(\frac{p}{2^k}\right) \left(1+\sup_{t \geq 0}\left(1 + \E[|X_t|^{2}]\right)\right).
 \end{align*}
Thus, we can use the previous estimate to find
 $\sup_{t \geq 0}\E[|X_t|^2] \leq C(2)(1+\sup_{t \geq 0}\E[|X_t|])$. This proves the assertion.
\end{proof}

\subsection{Uniform bound on H\"older increments}

It follows from \cite[Lemma 2.4]{MR4019885} that the Volterra square-root process
has H\"older continuous sample paths.
Below we recall their key estimate.
\begin{Proposition}\label{prop: Hoelder local}
 Let $X$ be the Volterra square-root process with admissible parameters $(b,\beta, \sigma, K)$  and initial state $x_0 \in \R_+^m$. Then for each $T > 0$ and $p \geq 2$ there exists a constant $C(T,p) > 0$ such that for all $s,t \in [0,T]$ with $0 \leq t-s \leq 1$ one has
 $\E[ |X_t - X_s|^p ] \leq C(T,p)(t-s)^{\gamma p/2}$.
\end{Proposition}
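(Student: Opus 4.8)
\emph{Proof sketch.} This estimate is recalled from \cite[Lemma 2.4]{MR4019885}; for completeness, here is how the standard argument goes. The plan is to work directly with the Volterra equation \eqref{eq: VCIR} rather than with the resolvent form \eqref{eq: VCIR 1}, because Condition (v) of the admissible parameters gives $L^2$-in-time control precisely on $K$ and on its increments. Since $x_0$ is constant, for $0\le s\le t\le T$ with $t-s\le 1$ we have the decomposition
\begin{align*}
 X_t-X_s
 &= \int_0^s\big(K(t-r)-K(s-r)\big)(b+\beta X_r)\,dr + \int_s^t K(t-r)(b+\beta X_r)\,dr\\
 &\quad + \int_0^s\big(K(t-r)-K(s-r)\big)\sigma(X_r)\,dB_r + \int_s^t K(t-r)\sigma(X_r)\,dB_r,
\end{align*}
and by $|a_1+a_2+a_3+a_4|^p\le 4^{p-1}\sum_i|a_i|^p$ it suffices to bound the $p$-th moment of each of the four terms by a constant times $(t-s)^{\gamma p/2}$. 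Throughout I would use: the a priori moment bound $\sup_{r\in[0,T]}\E[|X_r|^q]<\infty$ for every $q\ge 2$ (available from \cite[Theorem 6.1]{MR4019885} or \cite[Lemma 3.1]{ACLP19}); the estimate $\|\sigma(x)\|_{HS}^2=\sum_{i=1}^m\sigma_i^2x_i\le\sigma^\ast\sqrt{m}\,|x|$ for $x\in\R_+^m$, with $\sigma^\ast:=\max_i\sigma_i^2$; and, since $K$ is diagonal, $\|K(r)\|_{HS}^2=\sum_{i=1}^m|K_i(r)|^2$, so that Condition (v) yields $\int_0^{h}\|K(r)\|_{HS}^2\,dr\le mC_1h^{\gamma}$ and $\int_0^{T}\|K(r+h)-K(r)\|_{HS}^2\,dr\le mC_2(T)h^{\gamma}$ for $h\in[0,1]$.

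For the two drift terms I would pass to $L^p(\Omega)$-norms, apply Minkowski's integral inequality, bound $\|b+\beta X_r\|_{L^p(\Omega)}$ uniformly in $r\in[0,T]$ via the moment bound, and then use the Cauchy--Schwarz inequality in $r$ together with the substitutions $u=s-r$ and $u=t-r$, which give
\[
 \Big\|\int_0^s\big(K(t-r)-K(s-r)\big)(b+\beta X_r)\,dr\Big\|_{L^p(\Omega)}\le C\sqrt{T}\Big(\int_0^{T}\|K(u+(t-s))-K(u)\|_{HS}^2\,du\Big)^{1/2}\le C(t-s)^{\gamma/2}
\]
and, using $t-s\le 1$ and $(1+\gamma)/2\ge\gamma/2$,
\[
 \Big\|\int_s^t K(t-r)(b+\beta X_r)\,dr\Big\|_{L^p(\Omega)}\le C\sqrt{t-s}\Big(\int_0^{t-s}\|K(u)\|_{HS}^2\,du\Big)^{1/2}\le C(t-s)^{(1+\gamma)/2}\le C(t-s)^{\gamma/2}.
\]
For the two martingale terms I would apply the Burkholder--Davis--Gundy inequality, so that e.g.\ the first one is bounded by $C\,\E\big[\big(\int_0^s\|K(t-r)-K(s-r)\|_{HS}^2\,\|\sigma(X_r)\|_{HS}^2\,dr\big)^{p/2}\big]$, and then Minkowski's integral inequality at exponent $p/2\ge 1$ together with $\big\|\,\|\sigma(X_r)\|_{HS}^2\,\big\|_{L^{p/2}(\Omega)}\le\sigma^\ast\sqrt{m}\,\|X_r\|_{L^{p/2}(\Omega)}$ (bounded uniformly in $r\in[0,T]$) to obtain
\[
 \E\Big[\Big|\int_0^s\big(K(t-r)-K(s-r)\big)\sigma(X_r)\,dB_r\Big|^p\Big]\le C\Big(\int_0^{T}\|K(u+(t-s))-K(u)\|_{HS}^2\,du\Big)^{p/2}\le C(t-s)^{\gamma p/2},
\]
and similarly $\E\big[\big|\int_s^t K(t-r)\sigma(X_r)\,dB_r\big|^p\big]\le C\big(\int_0^{t-s}\|K(u)\|_{HS}^2\,du\big)^{p/2}\le C(t-s)^{\gamma p/2}$. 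Raising the two drift estimates to the $p$-th power and summing all four contributions yields the claim.

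The argument is routine, so no step is a genuine obstacle; the one point requiring care is the bookkeeping between the probabilistic $L^p(\Omega)$-norms and the deterministic $L^2$-in-time bounds on $K$. One must apply Minkowski's integral inequality at the correct exponent --- exponent $1$ for the drift and exponent $p/2$ (after BDG) for the martingale part --- so that the kernel always enters through $\int\|K\|_{HS}^2$, which is exactly where Condition (v) provides the gain $(t-s)^{\gamma}$; and one must exploit $t-s\le 1$ both to absorb the extra half-power produced by Cauchy--Schwarz in the endpoint integrals and to keep every kernel time-shift within $[0,1]$, the range in which Condition (v) is assumed.
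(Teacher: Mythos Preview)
Your sketch is correct and is precisely the standard argument behind \cite[Lemma~2.4]{MR4019885}, which the paper invokes without reproducing. The paper itself gives no proof here beyond the citation, so there is nothing further to compare.
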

To prove the existence of a stationary process and absolute continuity of the limiting distribution, we need a similar estimate but with a constant $C(T)$ independent of $T$. For this purpose we need a stronger assumption then condition (v) in the definition of admissible parameters. Namely, we impose the following condition on $K$:
\begin{enumerate}
    \item[(K)] There exists a constant $C_3 > 0$ such that
    \[
     \int_0^{\infty} | K_i(r+h) - K_i(r)|^2 dr \leq C_3 h^{\gamma}
    \]
    holds for all $i = 1,\dots, m$, $h \in (0,1]$.
\end{enumerate}
Note that this condition is satisfied for $K_i(t) = t^{H-1/2}/\Gamma(H+1/2)e^{-\lambda t}$ with $\lambda \geq 0$, $H \in (0,1/2)$, where $\gamma = 2H$. Also, under additional conditions on $\beta$ we may  obtain $E_{\beta} \in L^1(\R_+; \R^{m \times m}) \cap L^2(\R_+; \R^{m \times m})$ as required in the subsequent proposition (compare with Section 5.3).

The next proposition gives the desired global H\"older estimate for the process.
\begin{Proposition}\label{prop: Hoelder global}
 Let $X$ be the Volterra square-root process with admissible parameters $(b,\beta, \sigma,K)$  and initial state $x_0 \in \R_+^m$. Suppose that condition (K) holds, and $E_{\beta} \in L^1(\R_+; \R^{m\times m}) \cap L^2(\R_+; \R^{m\times m})$. Then there exists for each $p \geq 2$ a constant $C(p) > 0$ such that
 $\E\left[ |X_t - X_s|^p \right] \leq C(p)(t-s)^{ \gamma p /2 }$ holds for all $s,t \geq 0$ with $0 \leq t-s \leq 1$.
\end{Proposition}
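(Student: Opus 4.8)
The plan is to work from the equivalent representation \eqref{eq: VCIR 1}, $X_t = m(t) + \int_0^t E_\beta(t-r)\sigma(X_r)\,dB_r$ with $m(t):=\E[X_t]$, and, for $s<t$ with $h:=t-s\in(0,1]$, to split the stochastic integral at $s$:
\[
 X_t - X_s = \big(m(t)-m(s)\big) + \int_s^t E_\beta(t-r)\sigma(X_r)\,dB_r + \int_0^s\big(E_\beta(t-r)-E_\beta(s-r)\big)\sigma(X_r)\,dB_r;
\]
call the three terms $M$, $N_1$, $N_2$, so that $\E[|X_t-X_s|^p]\le C_p(|M|^p+\E[|N_1|^p]+\E[|N_2|^p])$. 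In contrast to Proposition \ref{prop: Hoelder local}, each bound below has to be uniform in $s$, which is exactly why the global hypotheses (K) and $E_\beta\in L^1(\R_+)\cap L^2(\R_+)$ are imposed: they upgrade the $[0,T]$-estimates behind condition (v) to estimates on all of $\R_+$. Throughout I use $R_\beta=E_\beta(-\beta)\in L^1(\R_+)\cap L^2(\R_+)$ and $E_\beta=K-R_\beta\ast K$.

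For $M$ I would use that $m$ solves $m(t)=x_0+\int_0^t K(t-r)(b+\beta m(r))\,dr$ and is bounded by Lemma \ref{lemma: first moment}, so $C_0:=\sup_{r\ge0}|b+\beta m(r)|<\infty$; splitting this convolution at $s$ gives $M=\int_s^t K(t-r)(b+\beta m(r))\,dr+\int_0^s(K(t-r)-K(s-r))(b+\beta m(r))\,dr$. The first integral is $\le C_0\int_0^h\|K(v)\|_{HS}\,dv\le C_0\sqrt h\,\big(\int_0^h\|K(v)\|_{HS}^2\,dv\big)^{1/2}\le Ch^{(1+\gamma)/2}$ by (v). For the second, the substitution $v=s-r$ bounds it by $C_0\int_0^\infty\|K(v+h)-K(v)\|_{HS}\,dv$; since each $K_i\ge0$ is nonincreasing by (vi), the integrand is dominated by $\sum_i(K_i(v)-K_i(v+h))$, and the telescoping identity
\[
 \int_0^\infty\big(K_i(v)-K_i(v+h)\big)\,dv = \int_0^h K_i(v)\,dv - h\lim_{v\to\infty}K_i(v) \le \int_0^h K_i(v)\,dv \le Ch^{(1+\gamma)/2}
\]
yields the same bound. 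Since $(1+\gamma)/2\ge\gamma/2$ and $h\le1$, this gives $|M|^p\le Ch^{\gamma p/2}$.

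For $N_1$ and $N_2$ I would repeat the computation from the proof of Lemma \ref{lemma: bounded moments}: Burkholder--Davis--Gundy, then $\|E_\beta(t-r)\sigma(X_r)\|_{HS}\le\|E_\beta(t-r)\|_{HS}\|\sigma(X_r)\|_{HS}$ with $\|\sigma(x)\|_{HS}^2=\sum_k\sigma_k^2x_k\le\sigma^*\sqrt m\,|x|$, $\sigma^*:=\max_k\sigma_k^2$, and finally the elementary inequality $\big(\int fg\big)^{p/2}\le\big(\int f\big)^{p/2-1}\int fg^{p/2}$ together with the uniform moment bound $\sup_{r\ge0}\E[|X_r|^{p/2}]<\infty$ from Lemma \ref{lemma: bounded moments}. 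This reduces everything to the two kernel estimates, uniform in $h\in(0,1]$,
\[
 \text{(a)}\ \ \int_0^h\|E_\beta(u)\|_{HS}^2\,du\le Ch^\gamma,\qquad \text{(b)}\ \ \int_0^\infty\|E_\beta(u+h)-E_\beta(u)\|_{HS}^2\,du\le Ch^\gamma,
\]
since then $\E[|N_1|^p]\le C\big(\int_0^h\|E_\beta(u)\|_{HS}^2\,du\big)^{p/2}\le Ch^{\gamma p/2}$, and for $N_2$, after substituting $u=s-r$ and enlarging the integration domain to $\R_+$, $\E[|N_2|^p]\le C\big(\int_0^\infty\|E_\beta(u+h)-E_\beta(u)\|_{HS}^2\,du\big)^{p/2}\le Ch^{\gamma p/2}$. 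Estimate (a) is immediate from $\int_0^h\|K(u)\|_{HS}^2\,du\le mC_1h^\gamma$ (by (v)) and $\int_0^h\|(R_\beta\ast K)(u)\|_{HS}^2\,du\le\|R_\beta\|_{L^1(\R_+)}^2\int_0^h\|K(u)\|_{HS}^2\,du$ (Young's inequality), using $R_\beta\in L^1(\R_+)$.

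Estimate (b) is the main obstacle. For the $K$-part, $\int_0^\infty\|K(u+h)-K(u)\|_{HS}^2\,du\le mC_3h^\gamma$ is exactly condition (K). For the $R_\beta\ast K$-part I would split, for each $r\ge0$ (writing $\Delta_hK(w):=K(w+h)-K(w)$),
\[
 (R_\beta\ast K)(r+h)-(R_\beta\ast K)(r) = (R_\beta\ast\Delta_hK)(r) + \int_0^h R_\beta(r+w)K(h-w)\,dw.
\]
The first term satisfies $\int_0^\infty\|(R_\beta\ast\Delta_hK)(r)\|_{HS}^2\,dr\le\|R_\beta\|_{L^1(\R_+)}^2\int_0^\infty\|\Delta_hK(w)\|_{HS}^2\,dw\le Ch^\gamma$ by Young's inequality and (K). For the boundary term, Cauchy--Schwarz in $w$ gives
\[
 \Big\|\int_0^h R_\beta(r+w)K(h-w)\,dw\Big\|_{HS}^2 \le \Big(\int_0^h\|R_\beta(r+w)\|_{HS}^2\,dw\Big)\Big(\int_0^h\|K(u)\|_{HS}^2\,du\Big) \le mC_1h^\gamma\int_0^h\|R_\beta(r+w)\|_{HS}^2\,dw,
\]
and integrating in $r$, by Tonelli and $R_\beta\in L^2(\R_+)$, yields $\le mC_1\|R_\beta\|_{L^2(\R_+)}^2\,h^{\gamma+1}\le Ch^\gamma$. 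Combining the two parts proves (b), and assembling (a), (b) and the bound on $M$ gives $\E[|X_t-X_s|^p]\le C(p)h^{\gamma p/2}$ with $C(p)$ independent of $s,t$, as asserted. The delicate point is this last decomposition of $\Delta_h(R_\beta\ast K)$: one must peel off the ``fresh'' convolution mass over $[r,r+h]$ as a boundary term before Young's inequality applies to the genuine convolution, and it is precisely for that boundary term that both the square-integrability $R_\beta\in L^2(\R_+)$ and the $L^2$ smallness estimates from (v) and (K) are needed.
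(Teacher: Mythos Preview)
Your proof is correct and follows essentially the same route as the paper: split via \eqref{eq: VCIR 1}, apply BDG together with the uniform moment bound of Lemma \ref{lemma: bounded moments}, and reduce the stochastic terms to the kernel estimates (a) and (b), which are precisely the content of Lemma \ref{lemma: global Ebeta} (proved in the paper through the same decomposition $\Delta_h(R_\beta\ast K)=R_\beta\ast\Delta_hK+\text{boundary}$). The only noteworthy differences are cosmetic: for the deterministic part you bound $m(t)-m(s)$ directly via the convolution equation for $m$ and a telescoping argument using the monotonicity of $K_i$, whereas the paper splits it as $-\int_s^t R_\beta x_0+\int_s^t E_\beta b$ and invokes Lemma \ref{lemma: global Ebeta}; and for the boundary term in (b) you use Cauchy--Schwarz with $R_\beta\in L^2(\R_+)$, while the paper uses a Jensen-type estimate needing only $R_\beta\in L^1(\R_+)$.
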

\begin{proof}
 Using \eqref{eq: VCIR 1} we find for $t,s \geq 0$ with $0 \leq t-s \leq 1$
 that $X_t - X_s = I_1 + I_2 + I_3 + I_4$ with
 $I_1 = -\int_s^t R_{\beta}(r)x_0 dr$, $I_2 = \int_s^t E_{\beta}(r)b dr$,
 and
 \begin{align*}
  I_3 &= \int_0^s ( E_{\beta}(t-r) - E_{\beta}(s-r))\sigma(X_r)dB_r
  \\ I_4 &= \int_s^t E_{\beta}(t-r)\sigma(X_r)dB_r.
 \end{align*}
 For the first term we obtain from Lemma \ref{lemma: global Ebeta}
 \begin{align*}
     |I_1|^p &\leq |x_0|^p \left( \int_s^t \| R_{\beta}(r)\|_2 dr \right)^p
     \\ &\leq |x_0|^p (t-s)^{p/2} \left(\int_s^t \| R_{\beta}(r)\|_2^2 dr \right)^{p/2}
     \\ &\leq |x_0|^p C^{p/2} (t-s)^{p/2 + \gamma p/2}.
 \end{align*}
 Likewise we obtain for the second term
 \begin{align*}
     |I_2|^p &\leq |b|^p (t-s)^{p/2} \left( \int_s^t \|E_{\beta}(r)\|_2^2 dr \right)^{p/2}
     \leq |b|^p C^{p/2} (t-s)^{p/2 + \gamma p/2}.
 \end{align*}
 For the third term we use the Burkholder-Davis-Gundy inequality to find
 \begin{align*}
     \E[|I_3|^p] &\leq c(m,p) \E\left[ \left( \int_0^s \|E_{\beta}(t-r) - E_{\beta}(s-r)\|_{HS}^2 \| \sigma(X_r)\|_{HS}^2 dr \right)^{p/2} \right]
     \\ &\leq (\sqrt{m}\sigma^*)^{p/2}c(m,p) \E\left[ \left( \int_0^s \|E_{\beta}(t-r) - E_{\beta}(s-r)\|_{HS}^2 |X_r| dr \right)^{p/2} \right]
     \\ &\leq (\sqrt{m}\sigma^*)^{p/2}c(m,p) \left( \int_0^s \|E_{\beta}(t-r) - E_{\beta}(s-r)\|_{HS}^2 dr \right)^{p/2-1}
     \\ & \qquad \qquad \qquad \cdot \int_0^s \| E_{\beta}(t-r) - E_{\beta}(s-r)\|_{HS}^2 \E\left[|X_r|^{p/2}\right]dr
     \\ &\leq (\sqrt{m}\sigma^*)^{p/2}c(m,p) \sup_{t \geq 0} \E\left[|X_r|^{p/2}\right] C^{p/2} (t-s)^{\gamma p /2},
 \end{align*}
 where we have used $\|\sigma(x)\|_{HS}^2
 \leq \sigma^* \sqrt{m} |x|$ with $\sigma^* = \max\{\sigma_1^2, \dots, \sigma_m^2\}$. Similarly we obtain for the last term
 \begin{align*}
     \E\left[ |I_4|^p \right] &\leq (\sqrt{m}\sigma^*)^{p/2}c(m,p) \sup_{t \geq 0} \E\left[|X_r|^{p/2}\right] \left( \int_s^t \|E_{\beta}(t-r)\|_{HS}^2 dr \right)^{p/2}
     \\ &\leq (\sqrt{m}\sigma^*)^{p/2}c(m,p) \sup_{t \geq 0} \E\left[|X_r|^{p/2}\right] C^{p/2} (t-s)^{\gamma p /2}.
 \end{align*}
 Combining all estimates and invoking Lemma \ref{lemma: bounded moments} proves the assertion.
\end{proof}

\section{Limiting distributions}

\subsection{Existence of limiting distributions}

Below under an integrability condition on $E_\beta$, we prove the existence of limiting distributions for the Volterra square-root process and therefore provide a mathematical justification of the mean-reversion property.
As a first step we prove the convergence of the Fourier-Laplace transform.
\begin{Proposition}\label{prop: convergence FT}
 Let $X$ be the Volterra square-root process with admissible parameters $(b,\beta,\sigma, K)$ and initial state $x_{0}\in\R_{+}^{m}$.
 Suppose that there exists $\eta \in (0,1)$ such that
 $[K]_{\eta,2,T} < \infty$ for each $T > 0$.
 Then the following assertions hold:
 \begin{enumerate}
     \item[(a)] Suppose that $E_{\beta} \in L^1(\R_+; \R^{m \times m})$. Let $\mu \in \mathcal{M}_{lf}^-$ be such that $|\mu|(\R_+) < \infty$ and $\mathrm{Im}(\mu) = 0$. Then $\psi \in L^1(\R_+; \C^m) \cap L^2(\R_+; \C^m)$, and
     \begin{align} \notag
      &\ \lim_{t \to \infty}
      \E\left[ e^{ \int_{[0,t]}\langle X_{t-s}, \mu(ds) \rangle} \right]
      \\ &= \exp\left\{ \langle x_0, \mu(\R_+) \rangle + \int_0^{\infty} \langle x_0, R(\psi(s,\mu)) \rangle ds + \int_0^{\infty} \langle b, \psi(s,\mu) \rangle ds \right\} \label{eq: FT convergence}
      \\ &= \exp\left\{ \langle A(\beta,x_0,b), \mu(\R_+) \rangle + \sum_{i=1}^m \frac{\sigma_i^2}{2}A_i(\beta,x_0,b)\int_0^{\infty} \psi_i(s,\mu)^2 ds \right\}, \label{eq: FT convergence II}
     \end{align}
     where $\psi(\cdot, \mu)$ denotes the unique solution of \eqref{eq: riccati extension},
     \begin{align}\label{eq: Abeta}
      A(\beta,x_0,b) = \left( I_m - \int_0^{\infty}R_{\beta}(s)ds \right)x_0 + \left( \int_0^{\infty}E_{\beta}(s)ds \right)b,
     \end{align}
     and $A_i(\beta,x_0,b)$ denotes the $i$-th component of the vector $A(\beta,x_0,b)$.

    \item[(b)] Suppose that $E_{\beta} \in L^1(\R_+; \R^{m \times m}) \cap L^2(\R_+; \R^{m \times m})$.
    Let $\mu \in \mathcal{M}_{lf}^-$ with $|\mu|(\R_+) < \infty$. Then $\psi \in L^1(\R_+; \C_-^m) \cap L^2(\R_+; \C_-^m)$, and the identities \eqref{eq: FT convergence} and \eqref{eq: FT convergence II} still hold.
 \end{enumerate}
\end{Proposition}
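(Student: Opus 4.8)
The plan is to first promote the local estimates of Section 3 to global (in time) integrability of $\psi(\cdot,\mu)$, and then to pass to the limit $t\to\infty$ in the two equivalent forms of the extended exponential-affine formula provided by Corollary~\ref{Theorem: affine riccati extension}. \emph{Step 1 (global integrability of $\psi$).} In case (b) we have $E_\beta\in L^1(\R_+;\R^{m\times m})\cap L^2(\R_+;\R^{m\times m})$, hence also $E_{\beta^\top}\in L^1(\R_+)\cap L^2(\R_+)$ (recall from Section~4 that $E_\beta$ and $E_{\beta^\top}$ share the same $L^p(\R_+)$-integrability), so the a priori bounds of Theorem~\ref{Theorem: riccati extension}(b) with $q\in\{1,2\}$ are uniform in $T$ and, together with $|\mu|(\R_+)<\infty$, yield $\psi(\cdot,\mu)\in L^1(\R_+;\C_-^m)\cap L^2(\R_+;\C_-^m)$. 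In case (a) the assumption $\mathrm{Im}(\mu)=0$ gives the sharper linear bound $\|\psi(\cdot,\mu)\|_{L^q([0,T])}\le|\mu|(\R_+)\,\|E_{\beta^\top}\|_{L^q([0,T])}$; for $q=1$ this already gives $\psi\in L^1(\R_+;\C^m)$, and for $q=2$ it gives $\psi\in L^2(\R_+;\C^m)$ once we know $E_{\beta^\top}\in L^2(\R_+)$. This last point is the genuinely new ingredient: under the admissibility conditions, $E_\beta\in L^1(\R_+)$ in fact forces $E_\beta\in L^2(\R_+)$, which I would establish by Plancherel applied to the resolvent symbol identity $\widehat{E_\beta}=\widehat K\,(I_m-\beta\widehat K)^{-1}$ — integrability of $E_\beta$ makes $\widehat{E_\beta}(i\cdot)$ bounded and continuous, hence square-integrable near the origin, while at high frequency $\widehat{E_\beta}(i\omega)$ behaves like $\widehat K(i\omega)$, which is square-integrable there because $K\in L^2_{\mathrm{loc}}$ and, by (vi), $K$ has a bounded nonincreasing tail.

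\emph{Step 2 ($R(\psi)\in L^1$).} Since $R(0)=0$ and $R_i(u)=\langle u,\beta^i\rangle+\frac{\sigma_i^2}{2}u_i^2$, one has $|R_i(\psi(s,\mu))|\le|\beta^i|\,|\psi(s,\mu)|+\frac{\sigma_i^2}{2}\psi_i(s,\mu)^2$, and both terms lie in $L^1(\R_+)$ by Step~1; note that here one must use this bound rather than $|R(u)|\le C(1+|u|^2)$, whose constant term is not integrable. \emph{Step 3 (first identity).} Corollary~\ref{Theorem: affine riccati extension} gives, for every $t\ge0$,
\[
 \mathbb{E}\Big[\mathrm{e}^{\int_{[0,t]}\langle X_{t-s},\mu(ds)\rangle}\Big]=\exp\Big\{\langle x_0,\mu([0,t])\rangle+\int_0^t\langle x_0,R(\psi(s,\mu))\rangle\,ds+\int_0^t\langle b,\psi(s,\mu)\rangle\,ds\Big\},
\]
and $\psi(\cdot,\mu)$ here is a fixed element of $L^1(\R_+)\cap L^2(\R_+)$ independent of $t$; hence, using $\mu([0,t])\to\mu(\R_+)$ (as $|\mu|(\R_+)<\infty$) together with Steps~1--2, each term of the exponent converges as $t\to\infty$, which is \eqref{eq: FT convergence}.

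For \eqref{eq: FT convergence II} I would start instead from the second form in Corollary~\ref{Theorem: affine riccati extension}, whose exponent reads $\int_{[0,t]}\langle\E[X_{t-s}],\mu(ds)\rangle+\sum_{i=1}^m\frac{\sigma_i^2}{2}\int_0^t\E[X_{i,t-s}]\,\psi_i(s,\mu)^2\,ds$. Since $E_\beta\in L^1(\R_+)$, Lemma~\ref{lemma: first moment} gives $\E[X_r]\to A(\beta,x_0,b)$ as $r\to\infty$ and $\sup_{r\ge0}\E[|X_r|]<\infty$; thus for fixed $s$ one has $\1_{[0,t]}(s)\big(\E[X_{t-s}]-A(\beta,x_0,b)\big)\to0$, dominated by a constant that is $|\mu|$-integrable because $|\mu|(\R_+)<\infty$, and $\1_{[0,t]}(s)\,\E[X_{i,t-s}]\psi_i(s,\mu)^2\to A_i(\beta,x_0,b)\psi_i(s,\mu)^2$, dominated by $\big(\sup_{r\ge0}\E[X_{i,r}]\big)\psi_i(\cdot,\mu)^2\in L^1(\R_+)$. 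Dominated convergence then gives $\int_{[0,t]}\langle\E[X_{t-s}],\mu(ds)\rangle\to\langle A(\beta,x_0,b),\mu(\R_+)\rangle$ and $\int_0^t\E[X_{i,t-s}]\psi_i(s,\mu)^2\,ds\to A_i(\beta,x_0,b)\int_0^\infty\psi_i(s,\mu)^2\,ds$, and taking exponentials yields \eqref{eq: FT convergence II}.

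The substantive step is Step~1 in case (a): extracting global $L^2(\R_+)$-integrability of $\psi$ (equivalently, of $E_\beta$) from only the hypothesis $E_\beta\in L^1(\R_+)$, using nothing beyond the qualitative structure built into the admissibility conditions. Once the global $L^1\cap L^2$-integrability of $\psi(\cdot,\mu)$ is available, the remainder is dominated convergence against the finite measure $|\mu|$ and against the integrable function $\psi_i(\cdot,\mu)^2$.
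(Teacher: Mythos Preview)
Your treatment of part (b), and your Steps 2--3 together with the dominated-convergence argument for \eqref{eq: FT convergence II}, are correct and coincide with the paper's proof: once $\psi\in L^1\cap L^2$ is in hand, one passes to the limit in the two forms of Corollary~\ref{Theorem: affine riccati extension} exactly as you describe (the paper writes out the $\varepsilon$--$t_0$ splitting explicitly, but it is the same dominated-convergence reasoning).

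The gap is in Step 1, case (a) --- precisely the step you yourself flag as the ``genuinely new ingredient''. You propose to deduce $E_\beta\in L^2(\R_+)$ from $E_\beta\in L^1(\R_+)$ via Plancherel and the symbol identity $\widehat{E_\beta}=\widehat K\,(I_m-\beta\widehat K)^{-1}$, then invoke the linear $L^2$-bound of Theorem~\ref{Theorem: riccati extension}(b). But for a non-integrable admissible kernel such as $K(t)=t^{H-1/2}/\Gamma(H+1/2)$, the Laplace transform $\widehat K(z)$ is defined only for $\mathrm{Re}(z)>0$; the symbol identity has no a priori meaning on the imaginary axis, and the claim that ``$\widehat K(i\omega)$ is square-integrable at high frequency because $K\in L^2_{\mathrm{loc}}$ with bounded nonincreasing tail'' is not justified --- the tail $K\1_{(1,\infty)}$ need lie in neither $L^1$ nor $L^2$, so its Fourier transform exists only distributionally and the algebraic manipulation of symbols is formal. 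Turning this into a proof would require nontrivial boundary analysis of matrix-valued Laplace transforms, and the paper does \emph{not} go this way.

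Instead, the paper extracts $\psi\in L^1\cap L^2$ directly from the affine formula by a positivity argument that genuinely uses $\mathrm{Im}(\mu)=0$. Write $V=-\psi\ge0$ and $\widetilde R_i(x)=\langle x,\beta^i\rangle-\tfrac{\sigma_i^2}{2}x_i^2$, and observe that the Riccati solution is independent of $(x_0,b)$. Jensen applied to the Laplace formula, combined with Lemma~\ref{lemma: first moment} (which uses only $E_\beta\in L^1$), yields
\[
 \langle x_0,-\mu([0,t])\rangle+\int_0^t\langle x_0,\widetilde R(V)\rangle\,ds+\int_0^t\langle b,V\rangle\,ds
 \le |\mu|(\R_+)\,C_\beta\max\{|x_0|,|b|\}
\]
for \emph{every} $x_0,b\in\R_+^m$; specialising to $x_0=0$, $b=(1,\dots,1)^\top$ gives $\sum_i\int_0^tV_i\,ds\le C$ uniformly in $t$, i.e.\ $\psi\in L^1(\R_+)$. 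For $L^2$, use that a Laplace transform is $\le1$, so the exponent is $\ge0$; specialising now to $b=0$, $x_0=(1,\dots,1)^\top$ and using the $L^1$-bound to control the linear part of $\widetilde R$ gives $\sum_i\tfrac{\sigma_i^2}{2}\int_0^tV_i^2\,ds\le C$ uniformly in $t$. This delivers exactly the integrability required for \eqref{eq: FT convergence} and \eqref{eq: FT convergence II}, with no hypothesis on $\|E_\beta\|_{L^2}$.
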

\begin{proof}
  (a) Instead of Fourier-Laplace transform, let us first reformulate the affine formula for the Laplace transform. The latter formulation is more natural to exploit the nonnegativity of the process. Noting $-\mu$ is an $\R_+^m$-valued measure, we let $V(t,\mu)= -\psi(t,\mu) \in \R_+^m$ and $\widetilde{R}_i(x) := -R_i(-x) = \langle x, \beta^i \rangle - \frac{\sigma_i^2}{2}x_i^2$, where $i = 1,\dots, m$ and $x \in \R_+^m$. Then $V(t,\mu)$ satisfies the Riccati-Volterra equation
  \begin{align*}
    V_i(t,\mu) = -\int_{[0,t)} K_i(t-s)\mu_i(ds) + \int_0^t K_i(t-s) \widetilde{R}_i(V(s,\mu))ds,
  \end{align*}
  and, by Corollary \ref{Theorem: affine riccati extension}, it holds
  \begin{align}
   & \mathbb{E}\left[\mathrm{e}^{- \int_{[0,t]} \langle X_{t-s}, -\mu(ds)\rangle}\right] \notag \\
   &\quad = \exp\left\{ - \langle x_0,- \mu([0,t]) \rangle - \int_0^t \langle x_0,\widetilde{R}(V(s,\mu)) \rangle ds -\int_{0}^{t}\langle b, V(s,\mu) \rangle ds\right\},\label{eq: laplace for volterra squre root}
  \end{align}
  where $\widetilde{R} = (\widetilde{R}_1, \dots, \widetilde{R}_m)^{\top}$.
  Applying Jensen's inequality, we have
  \[
   e^{-\int_{[0,t]} \langle \E[X_{t-s}], -\mu(ds)\rangle } \leq \mathbb{E}\left[\mathrm{e}^{- \int_{[0,t]} \langle X_{t-s}, -\mu(ds)\rangle}\right],
  \]
  and hence
  \begin{align*}
    &\ \langle x_0, -\mu([0,t]) \rangle + \int_0^t \langle x_0,\widetilde{R}(V(s,\mu)) \rangle ds + \int_{0}^{t}\langle b, V(s,\mu) \rangle ds
    \\ &\leq \int_{[0,t]} \langle \E[X_{t-s}], -\mu(ds)\rangle
    \\ &\leq |\mu|(\R_+) \sup_{t\geq 0}\E[ |X_t| ]
    \\ &\leq |\mu|(\R_+) C_{\beta}\max\{|x_0|,|b|\}.
  \end{align*}
  Note that this inequality holds for all choices of $b, x_0 \in \R_+^m$. In particular, choosing
  $b = (1,\dots, 1)^{\top}$ and $x_0 = 0$ gives
  \begin{align}\label{eq: V estimate}
   \sum_{i=1}^m \int_0^t V_i(s,\mu) ds
   \leq \sqrt{m}|\mu|(\R_+) C_{\beta}.
  \end{align}
  To estimate the integral involving $\widetilde{R}$, let us first note that the left-hand side of \eqref{eq: laplace for volterra squre root} is bounded by $1$ which gives
  \[
   \langle x_0, -\mu([0,t]) \rangle + \int_0^t \langle x_0,\widetilde{R}(V(s,\mu)) \rangle ds + \int_{0}^{t}\langle b, V(s,\mu) \rangle ds \geq 0
  \]
  for all $x_0,b \in \R_+^m$. For $b = 0$ and $x_0 = (1,\dots, 1)^{\top}$ we obtain
\begin{align*}
 \sum_{i=1}^m \frac{\sigma_i^2}{2}\int_0^t V_i(s,\mu)^2 ds
 &\leq \sum_{i=1}^m \left(-\mu_i([0,t])\right) + \sum_{i=1}^m \int_0^t \langle V(s,\mu), \beta^i \rangle ds
 \\ &\leq \sqrt{m}|\mu|(\R_+) + \sum_{i = 1}^m |\beta^i| \int_0^t |V(s,\mu)| ds
 \\ &\leq \sqrt{m}|\mu|(\R_+)  + m \| \beta\|_{HS} |\mu|(\R_+) C_{\beta},
\end{align*}
where we have used \eqref{eq: V estimate}. In view of \eqref{eq: V estimate} and the particular form of $\tilde{R}$, we obtain
\begin{align}
 \int_0^t |\widetilde{R}(V(s,\mu))| ds
 &\leq  \sum_{i=1}^m \int_0^t|\langle V(s,\mu), \beta^i \rangle |ds + \sum_{i=1}^m \frac{\sigma_i^2}{2} \int_0^t V_i(s,\mu)^2 ds \notag
 \\ &\leq \left(\sqrt{m} + 2m \| \beta\|_{HS}  C_{\beta} \right)|\mu|(\R_+).\label{eq: R estimate}
\end{align}
 This estimate combined with \eqref{eq: V estimate} proves the convergence in \eqref{eq: FT convergence}. For the second identity in part (a), use the second identity from Corollary \ref{Theorem: affine riccati extension} to pass to the limit $t \to \infty$, i.e., we show that
 \begin{enumerate}
     \item[(i)] $\lim_{t \to \infty} \int_{[0,t]}\langle\E[X_{t-s}], \mu(ds)\rangle = \langle A(\beta,x_0,b),\mu(\R_+)\rangle$;

     \item[(ii)] $\lim_{t \to \infty} \int_0^t \E[ X_{i,t-s}] V_i(s,\mu)^2 ds = A_i(\beta,x_0,b) \int_0^{\infty}V_i(s,\mu)^2 ds$.
 \end{enumerate}
 So let $\e > 0$. Since $A(\beta, x_0,b) = \lim_{t \to \infty}\E[X_t]$ by Lemma \ref{lemma: first moment}, we find $t_0 > 0$ such that $|\E[X_{i,r}] - A_i(\beta,x_0,b)| < \e$ for all $r \geq t_0$ and $i=1,\ldots,m$. Then for all $t \geq 2t_0$ we obtain
 \begin{align*}
     &\ \left| \int_{[0,t]}\langle \E[X_{t-s}], \mu(ds)\rangle - \langle A(\beta,x_0,b),\mu(\R_+)\rangle \right|
     \\ &\leq \left| \int_{[0,t/2]} \langle \E[X_{t-s}] - A(\beta,x_0,b), \mu(ds)\rangle \right|
     \\ &\ \ \ + \left| \int_{(t/2,t]} \langle \E[X_{t-s}] - A(\beta,x_0,b), \mu(ds)\rangle \right|
      +\left|\langle A(\beta,x_0,b),\mu((t,\infty))\rangle \right|
     \\ &\leq \e \sqrt{m} |\mu|(\R_+) + \left( \sup_{s \geq 0}\E[|X_s|]\right)|\mu|((t/2,t]) + 2 |A(\beta,x_0,b)|\,|\mu|((t/2,\infty)).
 \end{align*}
 Since $|\mu|(\R_+) < \infty$, we have $|\mu|((t/2,\infty)) \to 0$ as $t \to \infty$, which proves (i). For (ii) we have
 \begin{align*}
     &\ \left| \int_0^t \E[ X_{i,t-s}] V_i(s,\mu)^2 ds - A_i(\beta,x_0,b) \int_0^{\infty}V_i(s,\mu)^2 ds \right|
     \\ &\leq \int_0^t \left|\E[ X_{i,t-s}] - A_i(\beta,x_0,b)\right| V_i(s,\mu)^2 ds
     + A_i(\beta, x_0,b) \int_t^{\infty} V_i(s,\mu)^2 ds.
 \end{align*}
 The second term tends to zero due to $\int_0^{\infty}V_i(s,\mu)^2 ds < \infty$.
 For the first term we have for all $t \geq 2t_0$,
 \begin{align*}
     &\ \int_0^t \left|\E[ X_{i,t-s}] - A_i(\beta,x_0,b)\right| V_i(s,\mu)^2 ds
     \\ &= \int_0^{t/2} \left|\E[ X_{i,t-s}] - A_i(\beta,x_0,b)\right| V_i(s,\mu)^2 ds
     \\ & \ \ \ + \int_{t/2}^t \left|\E[ X_{i,t-s}] - A_i(\beta,x_0,b)\right| V_i(s,\mu)^2 ds
     \\ &\leq \e \int_0^{\infty} V_i(s,\mu)^2 ds + \left( \sup_{s \geq 0}\E[|X_s|] + |A(\beta,x_0,b)|\right) \int_{0}^{\infty} \1_{[t/2,t]}(s)V_i(s,\mu)^2 ds.
 \end{align*}
 The dominated convergence theorem implies that the second term tends to zero as $t \to \infty$. Since $\e$ is arbitrary, this proves (ii) and thus completes the proof of part (a).

 Part (b) can be shown in a similar way, since Theorem \ref{Theorem: riccati extension} applied for $T = \infty$ still provides the desired integrability $\int_0^{\infty}\left( |\psi(t,\mu)| + |\psi(t,\mu)|^2 \right)dt < \infty$.
\end{proof}

\begin{Remark}\label{coro: convergence laplace transform}
 If we choose  $\mu(ds) = u \delta_0(ds)$, then the statements of Proposition \ref{prop: convergence FT} and the estimates established in the above proof still hold even if we drop the condition $[K]_{\eta,2,T} < \infty$. Essentially, this condition was used to ensure the existence of $\psi(t,\mu)$ and the applicability of Theorem \ref{Theorem: riccati extension} and Corollary \ref{Theorem: affine riccati extension}. However, for the particular choice of $\mu(ds) = u \delta_0(ds)$, we can work directly with \eqref{eq: riccati} and $\psi(\cdot, u,0)$ instead of the extension \eqref{eq: riccati extension} and $\psi(\cdot, u \delta_0)$, and then apply Lemma \ref{lemma: local integrability psi}. The above proof still works in this case with some obvious adaptions.
\end{Remark}

From the convergence of the Fourier-Laplace transform we can now deduce convergence towards limiting distributions.
The following is our main result on limiting distributions for the Volterra square-root process. In contrast to the classical case, the limiting distribution now also involves the initial state of the process.
For this purpose we define
\begin{align}\label{defi:N}
    \mathcal{N} &= \left \{ v \in \R^m \ : \ \int_0^{\infty}R_{\beta}(t)v dt = v \right\},
\end{align}
and let $\mathcal{N}^{\perp}$ be the orthogonal complement of $\mathcal{N}$. Denote by $P$ the orthogonal projection operator onto $\mathcal{N}^{\perp}$, i.e., $\mathrm{ker}(P) = \mathcal{N}$ and $\mathrm{ran}(P) = \mathcal{N}^{\perp}$.
\begin{Theorem}\label{Theorem: limiting distribution}
 Let $X$ be the Volterra square-root process with admissible parameters $(b,\beta,\sigma, K)$ and initial state $x_{0}\in\R_{+}^{m}$. Suppose that $E_{\beta} \in L^1(\R_+; \R^{m \times m})$.
 Then the law of the random variable $X_t$ converges for $t \to \infty$ weakly to a limiting distribution $\pi_{x_0}$, whose Fourier-Laplace transform is for $u \in \C_-^m$ with $\mathrm{Im}(u) = 0$ given by
 \begin{align*}
  &\ \int_{\R_+^m} e^{\langle u, y\rangle} \pi_{x_0}(dy)
  \\ &\ = \exp\left\{ \langle x_0, u \rangle + \int_0^{\infty} \langle x_0, R(\psi(s,u \delta_0)) \rangle ds + \int_0^{\infty} \langle b, \psi(s,u\delta_{0}) \rangle ds \right\}.
 \end{align*}
 Moreover, $\pi_{x_0}$ has finite first moment and satisfies
 \begin{align}\label{eq: structure pi}
   \pi_{x_0} = \pi_{Px_0} = \pi_0 \ast \pi_{Px_0}^{b=0},
 \end{align}
 where $\ast$ denotes the usual convolution of probability measures on $\R_+^m$, and $\pi_{Px_0}^{b=0}$ is the limiting distribution of the Volterra square-root process with admissible parameters $(b = 0, \beta, \sigma, K)$ and initial state $Px_0$.
 Finally, if, in addition, $E_{\beta} \in L^2(\R_+; \R^{m\times m})$, then the Fourier-Laplace transform representation for $\pi_{x_0}$ can be extended to all $u \in \C_-^m$.
\end{Theorem}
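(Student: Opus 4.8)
Throughout I use that for $\mu=u\delta_0$ with $u\in\C_-^m$ equation \eqref{eq: riccati extension} reduces to \eqref{eq: riccati} with $f\equiv0$, so that $\psi(\cdot,u\delta_0)=\psi(\cdot,u,0)$ exists for any admissible parameters and, by Remark \ref{coro: convergence laplace transform}, Proposition \ref{prop: convergence FT} and the estimates in its proof are available here without the assumption $[K]_{\eta,2,T}<\infty$; note also that $E_\beta\in L^1$ forces $R_\beta=E_\beta(-\beta)\in L^1$, so $\int_0^\infty R_\beta(s)\,ds$ and hence $\mathcal N$ are well defined. By Lemma \ref{lemma: first moment}, $\sup_{t\ge0}\E[|X_t|]\le C_\beta\max\{|x_0|,|b|\}<\infty$, so by Markov's inequality the family $(\mathcal L(X_t))_{t\ge0}$ is tight on $\R_+^m$, hence relatively compact in the weak topology by Prokhorov. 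For every $u\in\C_-^m$ with $\mathrm{Im}(u)=0$, Remark \ref{coro: convergence laplace transform} (with $\mu=u\delta_0$) gives that $\E[e^{\langle u,X_t\rangle}]$ converges as $t\to\infty$ to the claimed expression. Since $y\mapsto e^{\langle u,y\rangle}$ is bounded and continuous on $\R_+^m$ whenever $\mathrm{Re}(u)\le0$, every weak subsequential limit of $\mathcal L(X_t)$ is a probability measure on $\R_+^m$ whose Laplace transform is that expression for all $u\le0$; as Laplace transforms are determining on $\R_+^m$, all these limits coincide, so $\mathcal L(X_t)\Rightarrow\pi_{x_0}$ with the asserted Fourier--Laplace transform. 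Applying the weak convergence to the bounded continuous functions $y\mapsto|y|\wedge N$ and letting $N\to\infty$ (monotone convergence) gives $\int_{\R_+^m}|y|\,\pi_{x_0}(dy)\le\sup_{t\ge0}\E[|X_t|]<\infty$.

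\textbf{Step 2 (the identities $\pi_{x_0}=\pi_{Px_0}=\pi_0\ast\pi_{Px_0}^{b=0}$).} Fix $u\in\C_-^m$ with $\mathrm{Im}(u)=0$ and set $V:=-\psi(\cdot,u,0)\ge0$. With $\widetilde R(x):=-R(-x)=\beta^\top x-q(x)$, $q(x):=(\tfrac{\sigma_1^2}{2}x_1^2,\dots,\tfrac{\sigma_m^2}{2}x_m^2)^\top$, equation \eqref{eq: riccati} becomes the linear Volterra equation $V=K(\cdot)(-u)+(K\beta^\top)\ast V-K\ast q(V)$, whose resolvent solution (Section 2) is $V=-E_{\beta^\top}(\cdot)u-E_{\beta^\top}\ast q(V)$. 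From the proof of Proposition \ref{prop: convergence FT}(a) one has $V\in L^1(\R_+;\R_+^m)$ and $\widetilde R(V)\in L^1(\R_+;\R^m)$, hence $q(V)=\beta^\top V-\widetilde R(V)\in L^1(\R_+;\R_+^m)$; since $E_\beta\in L^1$ gives $E_{\beta^\top}\in L^1$, Fubini and integration over $\R_+$ yield
\[
\mathcal V:=\int_0^\infty V(s)\,ds=-\mathcal E\,(u+\mathcal Q),\qquad \mathcal E:=\int_0^\infty E_{\beta^\top}(s)\,ds,\quad \mathcal Q:=\int_0^\infty q(V(s))\,ds .
\]
Consequently the $x_0$-dependent part of the exponent of the transform of $\pi_{x_0}$ equals
\[
\Big\langle x_0,\ u+\int_0^\infty R(\psi(s,u,0))\,ds\Big\rangle=\big\langle x_0,\ (u+\mathcal Q)-\beta^\top\mathcal V\big\rangle=\big\langle x_0,\ (I_m+\beta^\top\mathcal E)(u+\mathcal Q)\big\rangle .
\]
Using $R_\beta=(E_{\beta^\top})^\top(-\beta)$ one gets $\int_0^\infty R_\beta(s)\,ds=-\mathcal E^\top\beta$, so $I_m+\beta^\top\mathcal E=\big(I_m-\int_0^\infty R_\beta\big)^\top$ and therefore $(I_m+\beta^\top\mathcal E)(u+\mathcal Q)\in\mathrm{ran}\big((I_m-\int_0^\infty R_\beta)^\top\big)=\mathcal N^\perp$, since $\mathcal N=\mathrm{ker}\big(I_m-\int_0^\infty R_\beta\big)$. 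Writing $x_0=Px_0+(I_m-P)x_0$ with $(I_m-P)x_0\in\mathcal N$, the inner product above equals $\langle Px_0,(I_m+\beta^\top\mathcal E)(u+\mathcal Q)\rangle$, so the transform of $\pi_{x_0}$ — and hence $\pi_{x_0}$ itself — depends on $x_0$ only through $Px_0$. Granting that $P$ maps $\R_+^m$ into $\R_+^m$ (so that $\pi_{Px_0}$ and $\pi_{Px_0}^{b=0}$ are defined; this uses \eqref{eq: cone invariance} together with the structure of $\mathcal N$, cf.\ Theorem \ref{Theorem: limiting distribution characterisation}), we conclude $\pi_{x_0}=\pi_{Px_0}$. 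Finally, the exponent of the transform of $\pi_{x_0}$ decomposes as $\int_0^\infty\langle b,\psi(s,u,0)\rangle\,ds+\langle Px_0,(I_m+\beta^\top\mathcal E)(u+\mathcal Q)\rangle$; the first summand is the exponent of the transform of $\pi_0$ (take $x_0=0$) and the second is that of $\pi_{Px_0}^{b=0}$ (equation \eqref{eq: riccati} is independent of $b$, so $\psi,\mathcal V,\mathcal Q$ are unchanged when $b=0$), and since convolution of measures on $\R_+^m$ corresponds to the product of Laplace transforms this gives $\pi_{x_0}=\pi_0\ast\pi_{Px_0}^{b=0}$.

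\textbf{Step 3 and the main obstacle.} If in addition $E_\beta\in L^2$, then Proposition \ref{prop: convergence FT}(b), applied with $\mu=u\delta_0$ (again via Remark \ref{coro: convergence laplace transform}), shows that $\E[e^{\langle u,X_t\rangle}]$ converges to the same expression for \emph{every} $u\in\C_-^m$; combining this with the weak convergence $\mathcal L(X_t)\Rightarrow\pi_{x_0}$ and the boundedness of $y\mapsto e^{\langle u,y\rangle}$ on $\R_+^m$ extends the Fourier--Laplace formula to all of $\C_-^m$. The technical heart of the proof is Step 2: one must integrate the Riccati--Volterra equation for $V$ over the whole half-line — which is justified by the $L^1(\R_+)$-bounds on $V$ and $q(V)$ extracted from the proof of Proposition \ref{prop: convergence FT}(a) together with Fubini — and then recognise the resulting constant vector $(I_m+\beta^\top\mathcal E)(u+\mathcal Q)$ as an element of $\mathcal N^\perp$ through the resolvent identity $R_\beta=(E_{\beta^\top})^\top(-\beta)$. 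The auxiliary claim that $P$ preserves $\R_+^m$ (needed only to interpret $\pi_{Px_0}$ literally as a limiting distribution) is of a more structural nature and is where the remaining care is required.
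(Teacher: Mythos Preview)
Your proof is correct. In Step 1 you replace the paper's appeal to L\'evy's continuity theorem for Laplace transforms by the equivalent argument via tightness (from the uniform first-moment bound) and identification of subsequential limits by their Laplace transforms; this is standard and fine. In Step 3 you do exactly what the paper does.

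The only substantive difference is in Step 2. The paper takes the shortcut of invoking the \emph{second} representation \eqref{eq: FT convergence II} from Proposition \ref{prop: convergence FT}, which writes the limiting exponent directly as $\langle A(\beta,x_0,b),u\rangle+\sum_i\tfrac{\sigma_i^2}{2}A_i(\beta,x_0,b)\int_0^\infty\psi_i^2$; since $A(\beta,x_0,b)=(I_m-\int_0^\infty R_\beta)x_0+(\int_0^\infty E_\beta)b$ is already split into the $x_0$- and $b$-parts and annihilates $\mathcal N$, the identities $A(\beta,x_0,b)=A(\beta,Px_0,b)=A(\beta,0,b)+A(\beta,Px_0,0)$ are immediate. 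You instead stay with the first representation \eqref{eq: FT convergence} and recover the same structure by integrating the resolvent form $V=-E_{\beta^\top}u-E_{\beta^\top}\ast q(V)$ over $\R_+$ and recognising $(I_m+\beta^\top\mathcal E)=(I_m-\int_0^\infty R_\beta)^\top$, so that the $x_0$-part lies in $\mathcal N^\perp$. In effect your computation \emph{re-derives} \eqref{eq: FT convergence II} in disguise: your $(u+\mathcal Q)$ plays the role of $u+\big(\tfrac{\sigma_i^2}{2}\int_0^\infty\psi_i^2\big)_i$ and your pairing $\langle x_0,(I_m+\beta^\top\mathcal E)(u+\mathcal Q)\rangle$ equals $\langle (I_m-\int_0^\infty R_\beta)x_0,u+\mathcal Q\rangle$, while the $b$-term becomes $\langle(\int_0^\infty E_\beta)b,u+\mathcal Q\rangle$. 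What your route buys is self-containment---you use only the $L^1$-bounds on $V$ and $q(V)$ from the proof of Proposition \ref{prop: convergence FT}(a) and Fubini, and avoid the extra limit arguments (items (i)--(ii) there) that establish \eqref{eq: FT convergence II}; what the paper's route buys is brevity once that identity is in hand. The residual issue you flag---that $Px_0\in\R_+^m$ is needed to interpret $\pi_{Px_0}$ literally---is not addressed in the paper's proof either.
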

\begin{proof}
 Consider $u \in \C_-^m$ with $\mathrm{Im}(u) = 0$. According to Remark \ref{coro: convergence laplace transform},  if we take $\mu(ds) = u \delta_0(ds)$, then it holds
 \begin{align*}
   &\ \lim_{t \to \infty}
   \E\left[ e^{ \langle X_{t}, u \rangle} \right]
   \\ &\ = \exp\left\{ \langle x_0, u \rangle + \int_0^{\infty} \langle x_0, R(\psi(s,u \delta_0)) \rangle ds + \int_0^{\infty} \langle b, \psi(s,u \delta_0) \rangle ds \right\}.
 \end{align*}
 Moreover, the estimates \eqref{eq: V estimate} and \eqref{eq: R estimate} hold with $|\mu|(\R_+) = |u|$,
 showing that the right-hand side is continuous at $u = 0$. Hence using L\'evy's continuity theorem for Laplace transforms proves that $X_t$ converges weakly to some distribution $\pi_{x_0}$ and that the desired formula for the Laplace transform of $\pi_{x_0}$ holds.
 The extension to the Fourier-Laplace transform with $u \in \C_-^m$ follows now from Proposition \ref{prop: convergence FT}.(b). An application of the Lemma of Fatou shows that the limit distribution $\pi_{x_0}$ has finite first moment, i.e., $\int_{\R_+^m}|x|\pi_{x_0}(dx) \leq \sup_{t \geq 0}\E[|X_t|] < \infty$. It remains to prove \eqref{eq: structure pi}.
 For this purpose we use the second identity from Proposition \ref{prop: convergence FT}, i.e.,
  \begin{align*}
  &\ \lim_{t \to \infty}
   \E\left[ e^{ \langle X_{t}, u \rangle} \right]
  \\ &\ = \exp\left\{ \langle A(\beta,x_0,b), u \rangle + \sum_{i=1}^m \frac{\sigma_i^2}{2}A_i(\beta,x_0,b)\int_0^{\infty} \psi_i(s,u \delta_0)^2 ds \right\},
 \end{align*} 
 where $A(\beta,x_0,b)$ is defined in \eqref{eq: Abeta}.
  Then noting that
  \[ 
   A(\beta, x_0, b) = A(\beta, Px_0, b) = A(\beta, 0, b) + A(\beta, Px_0, 0)
  \] 
  readily yields \eqref{eq: structure pi} due to uniqueness of the Laplace transform. This completes the proof.
\end{proof}

\subsection{Stationary process}

Next we construct the corresponding stationary process. \begin{Theorem}\label{Theorem: stationary process}
Let $X$ be the Volterra square-root process with admissible parameters
$(b,\beta,\sigma,K)$ and initial state $x_{0}\in\R_{+}^{m}$. Suppose
that $E_{\beta}\in L^{1}(\R_{+};\R^{m\times m})\cap L^{2}(\R_{+};\R^{m\times m})$,
condition (K) holds, and there exists $\eta\in(0,1)$ such that $[K]_{\eta,2,T}<\infty$
for each $T>0$. Then there exists a stationary process $X^{\mathrm{stat}}$ with continuous sample paths such that the following assertions hold:
\begin{enumerate}
\item[(a)] It holds that $(X_{t+h})_{t \geq 0} \Rightarrow (X_t^{\mathrm{stat}})_{t \geq 0}$ weakly on $C(\R_+; \R^m_+)$ as $h \to \infty$.
\item[(b)] The finite dimensional distributions of $X^{\mathrm{stat}}$ are determined by
\begin{align*}
 &\ \E\left[e^{\sum_{j=1}^{n}\langle X_{t_{j}}^{\mathrm{stat}},u_{j}\rangle}\right] &
\\ &\ \ = \exp\bigg\{ \sum_{j=1}^n \langle A(\beta, x_0,b), u_j \rangle
+ \sum_{i=1}^m \frac{\sigma_i^2}{2}A_i(\beta,x_0,b) \int_0^{\infty}\psi_i(s)^2ds \bigg \},
\end{align*}
where $A(\beta,x_0,b)$ is defined by \eqref{eq: Abeta},
$\psi(\cdot) = \psi(\cdot, \mu_{t_1,\dots, t_n})$ denotes the unique solution of \eqref{eq: riccati extension} with  $\mu_{t_{1},\dots,t_{n}}(ds)=\sum_{j=1}^{n}u_{j}\delta_{t_{n}-t_{j}}(ds)$, $n\in\N$, $u_{1},\dots,u_{n}\in\C_{-}^{m}$ and $0\leq t_{1}<\dots<t_{n}$.
\end{enumerate}
\end{Theorem}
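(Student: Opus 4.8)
The plan is to derive both assertions from the extended exponential-affine transformation formula of Corollary~\ref{Theorem: affine riccati extension}, together with the large-time convergence of the generalized Laplace transform in Proposition~\ref{prop: convergence FT}.(b) and the uniform estimates of Section~4. First I would compute the finite dimensional distributions of the shifted process $(X_{t+h})_{t\ge0}$ and show that they converge, as $h\to\infty$, to the expression in part (b); then I would prove tightness of the laws of $(X_{t+h})_{t\ge0}$ on $C(\R_+;\R_+^m)$; finally I would identify the weak limit as a continuous stationary process and read off stationarity from the structure of its finite dimensional distributions.

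\textbf{Finite dimensional distributions.} Fix $n\in\N$, reals $0\le t_1<\dots<t_n$ and $u_1,\dots,u_n\in\C_-^m$, and put $T=t_n+h$ and $\mu=\mu_{t_1,\dots,t_n}=\sum_{j=1}^n u_j\delta_{t_n-t_j}$. The key point is that $\mu$ does \emph{not} depend on $h$, lies in $\mathcal{M}_{lf}^-$ (since $\mathrm{Re}(u_j)\le 0$), and has finite total variation $|\mu|(\R_+)=\sum_{j=1}^n|u_j|$; moreover, since its atoms lie in $[0,t_n]\subset[0,T]$, the atom at $s=t_n-t_j$ contributes $X_{T-(t_n-t_j)}=X_{t_j+h}$, so that
\[
 \sum_{j=1}^n\langle X_{t_j+h},u_j\rangle=\int_{[0,T]}\langle X_{T-s},\mu(ds)\rangle ,\qquad \mu([0,T])=\mu(\R_+)=\sum_{j=1}^n u_j .
\]
Applying Corollary~\ref{Theorem: affine riccati extension} with this $\mu$ (which applies since $[K]_{\eta,2,T}<\infty$ for each $T$) gives, for every $h\ge0$,
\[
 \E\Big[e^{\sum_{j=1}^n\langle X_{t_j+h},u_j\rangle}\Big]=\exp\Big\{\langle x_0,\mu(\R_+)\rangle+\int_0^T\langle x_0,R(\psi(s,\mu))\rangle\,ds+\int_0^T\langle b,\psi(s,\mu)\rangle\,ds\Big\}.
\]
Since $E_\beta\in L^1(\R_+;\R^{m\times m})\cap L^2(\R_+;\R^{m\times m})$ and $|\mu|(\R_+)<\infty$, Proposition~\ref{prop: convergence FT}.(b) applies to $\mu$, yielding $\psi(\cdot,\mu)\in L^1\cap L^2$ and the identity \eqref{eq: FT convergence II} for the limit $T\to\infty$. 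Letting $h\to\infty$ and using $\mu(\R_+)=\sum_{j=1}^n u_j$, the right-hand side converges precisely to the expression in part (b); this establishes convergence of all finite dimensional distributions of $(X_{t+h})_{t\ge0}$.

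\textbf{Tightness, identification and stationarity.} For the shifted processes one has $\sup_{t\ge0}\E[|X_t|^p]<\infty$ by Lemma~\ref{lemma: bounded moments}, and by Proposition~\ref{prop: Hoelder global} (which uses condition (K) and $E_\beta\in L^1\cap L^2$) the bound $\E[|X_{t+h}-X_{s+h}|^p]\le C(p)|t-s|^{\gamma p/2}$ holds for $0\le t-s\le 1$ with $C(p)$ independent of $h$. Choosing $p$ large enough that $\gamma p/2>1$ and invoking Kolmogorov's tightness criterion on each interval $[0,N]$, the family $\{\mathrm{Law}((X_{t+h})_{t\ge0}):h\ge0\}$ is tight in $C([0,N];\R_+^m)$ for every $N$, hence in $C(\R_+;\R_+^m)$. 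Combined with the previous step, every subsequential weak limit has the finite dimensional distributions of part (b) (these being uniquely determined by the limiting Laplace transforms), so $(X_{t+h})_{t\ge0}$ converges weakly on $C(\R_+;\R_+^m)$ as $h\to\infty$ to a process $X^{\mathrm{stat}}$ with continuous sample paths and the stated finite dimensional distributions; this proves (a) and (b). Stationarity follows at once: for $r\ge0$ one has $\mu_{t_1+r,\dots,t_n+r}=\mu_{t_1,\dots,t_n}$, because the atoms sit at $(t_n+r)-(t_j+r)=t_n-t_j$, so the right-hand side of (b) is invariant under a common shift of the time points; equivalently, $\theta_r(X_{\cdot+h})=X_{\cdot+r+h}$ also converges weakly to $X^{\mathrm{stat}}$ while continuity of the shift map $\theta_r$ on $C(\R_+;\R_+^m)$ forces $\theta_rX^{\mathrm{stat}}\stackrel{d}{=}X^{\mathrm{stat}}$.

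\textbf{Main obstacle.} The substantive step is the computation of the finite dimensional distributions, and there the crux is recognizing that the functional $\sum_{j}\langle X_{t_j+h},u_j\rangle$ is encoded by the \emph{$h$-independent} measure $\mu\in\mathcal{M}_{lf}^-$ of finite total variation, which reduces the limit to the already-established convergence \eqref{eq: FT convergence II} of the generalized Laplace transform. Granting this, the passage through tightness to weak convergence in $C(\R_+;\R_+^m)$ and the verification of stationarity are routine.
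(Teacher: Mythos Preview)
Your proposal is correct and follows essentially the same route as the paper: it uses Proposition~\ref{prop: Hoelder global} together with Kolmogorov's criterion for tightness on $C(\R_+;\R_+^m)$, and identifies the finite dimensional limits by writing $\sum_j\langle X_{t_j+h},u_j\rangle=\int_{[0,t_n+h]}\langle X_{t_n+h-s},\mu_{t_1,\dots,t_n}(ds)\rangle$ and applying Proposition~\ref{prop: convergence FT}.(b). The paper's proof differs only in presentation, first extracting a tight subsequence and then computing $\E[e^{\sum_j\langle X_{t_j+h}^{\mathrm{stat}},u_j\rangle}]$ for all $h\ge0$ to obtain stationarity, whereas you argue stationarity via the shift-invariance $\mu_{t_1+r,\dots,t_n+r}=\mu_{t_1,\dots,t_n}$; both observations amount to the same thing.
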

\begin{proof}
Choose $p\geq2$ sufficiently large so that $\gamma p>2$. By Proposition
\ref{prop: Hoelder global}, we find some constant $C(p)>0$ such
that $\E[|X_{t}-X_{s}|^{p}]\leq C(p)(t-s)^{\gamma p/2}$ holds for all $t,s\geq0$ with $0\leq t-s\leq1$. Define for $h\geq0$ the process $X^{h}$ by $X_{t}^{h}=X_{h+t}$,  where $t \ge 0$.
Then $X^{h}$ has continuous sample paths and satisfies
$\sup_{h\geq0}\E[|X_{t}^{h}-X_{s}^{h}|^{p}] \le C(p)(t-s)^{\gamma p/2}$
for $0\leq t-s\leq1$. Applying the Kolmogorov tightness criterion
(see, e.g., \cite[Corollary 16.9]{MR1876169}) shows that $(X^{h})_{h\geq0}$
is tight on $C(\R_{+};\R_{+}^{m})$.

Hence we conclude that along a sequence $h_{k}\uparrow\infty$, $X^{h_{k}}$
converges in law to some continuous process $X^{\mathrm{stat}}$.
Take $n\in\N$ and let $0\leq t_{1}<\dots<t_{n}$. Applying Proposition
\ref{prop: convergence FT} for the particular choice $\mu_{t_{1},\dots,t_{n}}(ds)=\sum_{j=1}^{n}u_{j}\delta_{t_{n}-t_{j}}(ds)$,
where $u_{1},\dots,u_{m}\in\C_{-}^{m}$,
we find that, for all $h\ge0$,
\begin{align}
\E\left[e^{\sum_{j=1}^{n}\langle X_{t_{j}+h}^{\mathrm{stat}},u_{j}\rangle}\right] & =\lim_{k\to\infty}\E\left[e^{\sum_{j=1}^{n}\langle X_{t_{j}+h}^{h_{k}},u_{j}\rangle}\right]\nonumber \\
 & =\lim_{k\to\infty}\E\left[e^{\sum_{j=1}^{n}\langle X_{h_{k}+h+t_{j}},u_{j}\rangle}\right]\label{eq: weak conver of finite}\\
 & =\lim_{k\to\infty}\E\left[e^{\int_{[0,h_{k}+h+t_{n}]}\langle X_{h_{k}+h+t_{n}-s},\mu_{t_{1},\dots,t_{n}}(ds)\rangle}\right]\nonumber \\
 & =\exp\bigg\{\sum_{j=1}^{n}\langle x_{0},u_{j}\rangle+\int_{0}^{\infty}\langle x_{0},R(\psi(s,\mu_{t_{1},\dots,t_{n}}))\rangle ds\nonumber \\
 & \hskip35mm+\int_{0}^{\infty}\langle b,\psi(s,\mu_{t_{1},\dots,t_{n}})\rangle ds\bigg\}.\label{eq: weak conv of finite II}
\end{align}
In view of \eqref{eq: FT convergence II}, \eqref{eq: weak conver of finite} and
\eqref{eq: weak conv of finite II}, the desired formula of the Fourier transform is proved. Since $\{h_k\}$ is arbitrary and \eqref{eq: weak conv of finite II} is independent of $\{h_k\}$, it is  standard to verify the weak convergence in (a). The assertion is proved.
\end{proof}
A direct consequence of Theorem \ref{Theorem: stationary process} is that $X_t \longrightarrow \pi_{x_0}$ weakly as $t\to \infty$, and $X_t^{\mathrm{stat}}$ has distribution $\pi_{x_0}$ for each $t \geq 0$. In the next statement we compute the moments, covariance structure, and autocovariance function of the stationary process.
\begin{Corollary}
 Under the same conditions as in Theorem \ref{Theorem: stationary process},
 the stationary process $X^{\mathrm{stat}}$ satisfies $\E[ |X_t^{\mathrm{stat}}|^p] = \int_{\R_+^m}|x|^p \pi_{x_0}(dx) < \infty$ for each $p \geq 2$. Moreover, its first moment is given by
 \[
  \E[X_t^{\mathrm{stat}}] = \left( I_m - \int_0^{\infty}R_{\beta}(s)ds \right)x_0 + \left( \int_0^{\infty}E_{\beta}(s)ds \right)b,
 \]
 while its autocovariance function is, for $0 \leq s \leq t$, given by
 \begin{align*}
  &\ \mathrm{cov}(X_t^{\mathrm{stat}}, X_s^{\mathrm{stat}})
  \\ &= \int_0^{\infty} E_{\beta}(t-s+u)\sigma(A(\beta,x_0,b))\sigma(A(\beta,x_0,b))^{\top}E_{\beta}(u)^{\top}du.
 \end{align*}
\end{Corollary}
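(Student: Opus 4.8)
The plan is to deduce all three assertions from the weak convergence $X^h \Rightarrow X^{\mathrm{stat}}$ on $C(\R_+;\R_+^m)$ proved in Theorem \ref{Theorem: stationary process}, the uniform-in-time moment bounds of Lemma \ref{lemma: bounded moments}, and the equivalent representation \eqref{eq: VCIR 1}. Here $X^h_t = X_{t+h}$, so $X^h_t \Rightarrow X^{\mathrm{stat}}_t$ and, more generally, $(X^h_s,X^h_t) \Rightarrow (X^{\mathrm{stat}}_s,X^{\mathrm{stat}}_t)$ as $h\to\infty$. By Lemma \ref{lemma: bounded moments} the families $\{|X_{t+h}|^p\}_{h\ge0}$ and $\{X_{t+h}\}_{h\ge0}$ are bounded in $L^2$, hence uniformly integrable; combined with the weak convergence this gives $\E[|X^{\mathrm{stat}}_t|^p] = \lim_{h\to\infty}\E[|X_{t+h}|^p] \le \sup_{r\ge0}\E[|X_r|^p] < \infty$ and $\E[X^{\mathrm{stat}}_t] = \lim_{h\to\infty}\E[X_{t+h}]$. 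Since $X^{\mathrm{stat}}_t \sim \pi_{x_0}$ the former quantity is $\int_{\R_+^m}|x|^p\pi_{x_0}(dx)$, while Lemma \ref{lemma: first moment} (applicable as $E_{\beta}\in L^1$) identifies the latter limit with $A(\beta,x_0,b) = (I_m - \int_0^{\infty}R_{\beta}(s)ds)x_0 + (\int_0^{\infty}E_{\beta}(s)ds)b$, which is the stated first moment.

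For the autocovariance, fix $0\le s\le t$ and $h\ge0$. Writing $X_r = m(r) + \int_0^r E_{\beta}(r-q)\sigma(X_q)dB_q$ with $m(r)$ deterministic (by \eqref{eq: VCIR 1}), splitting $\int_0^{t+h} = \int_0^{s+h} + \int_{s+h}^{t+h}$, using that the last increment has vanishing conditional expectation given $\F_{s+h}$ and is therefore uncorrelated with the $\F_{s+h}$-measurable vector $X_{s+h}-\E[X_{s+h}]$, and applying the Itô isometry to the remaining cross term together with $\E[\sigma(X_q)\sigma(X_q)^{\top}] = \mathrm{diag}(\sigma_1^2\E[X_{1,q}],\dots,\sigma_m^2\E[X_{m,q}])$, one obtains
\[
 \mathrm{cov}(X_{t+h},X_{s+h}) = \int_0^{s+h} E_{\beta}(t+h-q)\,\mathrm{diag}(\sigma_i^2\E[X_{i,q}])\,E_{\beta}(s+h-q)^{\top}\,dq.
\]
Substituting $q = s+h-u$ rewrites the right-hand side as $\int_0^{s+h} E_{\beta}(t-s+u)\,\mathrm{diag}(\sigma_i^2\E[X_{i,s+h-u}])\,E_{\beta}(u)^{\top}\,du$. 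As $h\to\infty$ we have $\E[X_{i,s+h-u}] \to A_i(\beta,x_0,b)$ for each fixed $u$ by Lemma \ref{lemma: first moment}, and the integrand is dominated in Hilbert--Schmidt norm by $(\max_i\sigma_i^2)(\sup_r\E[|X_r|])\,\|E_{\beta}(t-s+u)\|_{HS}\,\|E_{\beta}(u)\|_{HS}$, which is integrable on $\R_+$ by Cauchy--Schwarz since $E_{\beta}\in L^2(\R_+;\R^{m\times m})$; dominated convergence then yields
\[
 \lim_{h\to\infty}\mathrm{cov}(X_{t+h},X_{s+h}) = \int_0^{\infty} E_{\beta}(t-s+u)\,\mathrm{diag}(\sigma_i^2 A_i)\,E_{\beta}(u)^{\top}\,du = \int_0^{\infty} E_{\beta}(t-s+u)\,\sigma(A)\sigma(A)^{\top}\,E_{\beta}(u)^{\top}\,du,
\]
where $A = A(\beta,x_0,b) = \lim_r\E[X_r]\in\R_+^m$, so that $\sigma(A)$ is well defined and $\sigma(A)\sigma(A)^{\top} = \mathrm{diag}(\sigma_1^2A_1,\dots,\sigma_m^2A_m)$. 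On the other hand $\mathrm{cov}(X_{t+h},X_{s+h}) = \mathrm{cov}(X^h_t,X^h_s) \to \mathrm{cov}(X^{\mathrm{stat}}_t,X^{\mathrm{stat}}_s)$ because $(X^h_s,X^h_t)\Rightarrow(X^{\mathrm{stat}}_s,X^{\mathrm{stat}}_t)$ and the products $X_{i,t+h}X_{j,s+h}$ are bounded in $L^2$, hence uniformly integrable, by Lemma \ref{lemma: bounded moments}. Equating the two limits gives the claimed formula.

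The main obstacle is not conceptual but lies in legitimizing the several exchange-of-limit steps — from weak convergence to convergence of the first and second moments, and the $h\to\infty$ passage inside the covariance integral — each of which is underwritten by Lemma \ref{lemma: bounded moments} together with $E_{\beta}\in L^1(\R_+;\R^{m\times m})\cap L^2(\R_+;\R^{m\times m})$. Alternatively, the autocovariance can be read off directly from the Fourier--Laplace formula of Theorem \ref{Theorem: stationary process}(b): taking $n=2$, $t_1=s<t_2=t$, $\mu = \e_1 u_1\delta_{t-s} + \e_2 u_2\delta_0$ and linearizing the Riccati--Volterra equation \eqref{eq: riccati extension} at $\e_1=\e_2=0$ gives, via $DR(0)=\beta^{\top}$ and the resolvent identities of Section 2, $\partial_{\e_1}\psi|_0(r) = E_{\beta^{\top}}(r-(t-s))u_1\1_{\{r>t-s\}}$ and $\partial_{\e_2}\psi|_0(r) = E_{\beta^{\top}}(r)u_2$; the $\e_1\e_2$-coefficient of the log-transform equals the mixed second cumulant $\mathrm{cov}(\langle X^{\mathrm{stat}}_s,u_1\rangle,\langle X^{\mathrm{stat}}_t,u_2\rangle)$, and after the substitution $u=r-(t-s)$ and the identity $E_{\beta}=(E_{\beta^{\top}})^{\top}$ one recovers the same integral. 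I would present the first route as the primary one since it avoids the additional differentiability bookkeeping.
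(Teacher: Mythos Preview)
Your proof is correct and follows essentially the same route as the paper's: derive the covariance formula for $X_{t+h},X_{s+h}$ via the representation \eqref{eq: VCIR 1} and the It\^o isometry, then pass $h\to\infty$. You are in fact more explicit than the paper in justifying the two limit exchanges (dominated convergence inside the integral via the $L^2$-bound on $E_{\beta}$, and $\mathrm{cov}(X^h_t,X^h_s)\to\mathrm{cov}(X^{\mathrm{stat}}_t,X^{\mathrm{stat}}_s)$ via uniform integrability), which the paper leaves implicit.
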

\begin{proof}
 Since $\sup_{t \geq 0}\E[|X_t|^p] < \infty$ holds for each $p \geq 2$ and $X_t \longrightarrow \pi_{x_0}$ weakly, the Lemma of Fatou implies that $\pi_{x_0}$ has all finite moments. Since $X^{\mathrm{stat}}$ is stationary, we conclude the first assertion. For the first moment formula we note that $\E[X_t] \longrightarrow A(\beta,x_0,b)$ as $t \to \infty$. Since $\sup_{t \geq 0}\E[|X_t|^2] < \infty$, we easily conclude that $\lim_{t \to \infty}\E[X_t] = \int_{\R_+^m}x \pi_{x_0}(dx) = \E[X_t^{\mathrm{stat}}]$. This proves the desired first moment formula for the stationary process. Noting that $X_t - \E[X_t] = \int_0^s E_{\beta}(t-u)\sigma(X_u)dB_u + \int_s^t E_{\beta}(t-u)\sigma(X_u)dB_u$ and
 $X_s - \E[X_s] = \int_0^s E_{\beta}(s-u)\sigma(X_u)dB_u$, we find that the autocovariance function for $X$ is given by
 \begin{align*}
     \mathrm{cov}(X_t,X_s) &= \E\left[ \left(\int_0^s E_{\beta}(t-u)\sigma(X_u)dB_u \right) \left( \int_0^s E_{\beta}(s-u)\sigma(X_u)dB_u \right)^{\top} \right]
     \\ &= \int_0^s E_{\beta}(t-u)\E\left[\sigma(X_u)\sigma(X_u)^{\top}\right]E_{\beta}(s-u)^{\top}du
     \\ &= \int_0^s E_{\beta}(t-u)\sigma(\E[X_u])\sigma(\E[X_u])^{\top}E_{\beta}(s-u)^{\top}du,
 \end{align*}
 where we have used the particular form of $\sigma(x)$ so that $\E\left[\sigma(X_u)\sigma(X_u)^{\top}\right]= \sigma(\E[X_u])\sigma(\E[X_u])^{\top}$.
 Thus the autocovariance function of the stationary process is given by
  \begin{align*}
   &\ \mathrm{cov}(X_t^{\mathrm{stat}}, X_s^{\mathrm{stat}})
   \\ &= \lim_{h \to \infty}\mathrm{cov}(X_{t+h},X_{s+h})
  \\ &= \lim_{h\to \infty}\int_0^{s+h} E_{\beta}(t+h-u)\sigma(\E[X_u])\sigma(\E[X_u])^{\top}E_{\beta}(s+h-u)^{\top}du
  \\ &= \lim_{h \to \infty}\int_{-h}^{s}E_{\beta}(t-u)\sigma(\E[X_{u+h}])\sigma(\E[X_{u+h}])^{\top}E_{\beta}(s-u)^{\top}du
  \\ &= \int_{-\infty}^s E_{\beta}(t-u)\sigma(A(\beta,x_0,b))\sigma(A(\beta,x_0,b))^{\top}E_{\beta}(s-u)^{\top}du
  \\ &=\int_0^{\infty} E_{\beta}(t-s+u))\sigma(A(\beta,x_0,b)\sigma(A(\beta,x_0,b))^{\top}E_{\beta}(u)^{\top}du,
 \end{align*}
 which proves the assertion.
\end{proof}

The particular form of the Laplace transform for the limiting distribution and the stationary process $X^{\mathrm{stat}}$ give the following characterization for the independence on the initial condition $x_0$.
\begin{Corollary}
 Let $(b,\beta, \sigma, K)$ be admissible parameters. Suppose that $E_{\beta}$ belongs to $L^1(\R_+; \R^{m \times m}) \cap L^2(\R_+; \R^{m \times m})$, then the following are equivalent:
 \begin{enumerate}
     \item[(i)] The stationary process $X^{\mathrm{stat}}$ is independent of $x_0$;
     \item[(ii)] The limiting distribution $\pi_{x_0}$ is independent of $x_0$;
     \item[(iii)] The function $x_0 \longmapsto \int_{\R_+^m}x \pi_{x_0}(dx)$ is constant;
     \item[(iv)] $\int_0^{\infty} R_{\beta}(t)dt = I_m$.
 \end{enumerate} 
\end{Corollary}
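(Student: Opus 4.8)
The plan is to establish the cyclic chain of implications (i)\,$\Rightarrow$\,(ii)\,$\Rightarrow$\,(iii)\,$\Rightarrow$\,(iv)\,$\Rightarrow$\,(i). The unifying observation, which does most of the work, is that both the limiting distribution and the whole stationary process depend on the initial state $x_0$ only through the vector $A(\beta,x_0,b)$ from \eqref{eq: Abeta}. Indeed, by the second representation \eqref{eq: FT convergence II} applied with $\mu=u\delta_0$ (for which Remark \ref{coro: convergence laplace transform} removes the auxiliary assumption $[K]_{\eta,2,T}<\infty$, so only $E_\beta\in L^1\cap L^2$ is needed), the Fourier--Laplace transform of $\pi_{x_0}$ reads $\exp\{\langle A(\beta,x_0,b),u\rangle+\sum_i\frac{\sigma_i^2}{2}A_i(\beta,x_0,b)\int_0^\infty\psi_i(s,u\delta_0)^2\,ds\}$, and by Theorem \ref{Theorem: stationary process}.(b) the finite dimensional distributions of $X^{\mathrm{stat}}$ have the same structure; crucially, the relevant solution $\psi(\cdot,\mu)$ of \eqref{eq: riccati extension} involves only $\mu$, $K$, $\beta$ and $\sigma$, but neither $x_0$ nor $b$. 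Since $A(\beta,x_0,b)=(I_m-\int_0^\infty R_\beta(s)\,ds)x_0+(\int_0^\infty E_\beta(s)\,ds)b$ is affine in $x_0$ with linear part $I_m-\int_0^\infty R_\beta(s)\,ds$, condition (iv) is precisely the statement that $x_0\mapsto A(\beta,x_0,b)$ is constant on $\R_+^m$.

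The two easy steps come first. The implication (i)\,$\Rightarrow$\,(ii) is immediate from the discussion following Theorem \ref{Theorem: stationary process}: $\pi_{x_0}$ is the common one-dimensional marginal law of $X^{\mathrm{stat}}$, so if the whole process does not depend on $x_0$, neither does this marginal. The implication (ii)\,$\Rightarrow$\,(iii) is trivial, since $\int_{\R_+^m}x\,\pi_{x_0}(dx)$ — which is finite by Theorem \ref{Theorem: limiting distribution} — is a functional of the measure $\pi_{x_0}$ alone.

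For (iii)\,$\Rightarrow$\,(iv) I would argue as follows. By Lemma \ref{lemma: first moment} one has $\E[X_t]\to A(\beta,x_0,b)$ as $t\to\infty$, and since $X_t\Rightarrow\pi_{x_0}$ together with $\sup_t\E[|X_t|^2]<\infty$ (Lemma \ref{lemma: bounded moments}) yields uniform integrability of $(X_t)_{t\ge0}$, we get $\int_{\R_+^m}x\,\pi_{x_0}(dx)=A(\beta,x_0,b)$ for \emph{every} $x_0\in\R_+^m$. If this map is constant on $\R_+^m$, then subtracting its value at $x_0=0$ gives $(I_m-\int_0^\infty R_\beta(s)\,ds)x_0=0$ for all $x_0\in\R_+^m$; because $\R_+^m$ is a generating cone ($\R^m=\R_+^m-\R_+^m$) this forces $I_m-\int_0^\infty R_\beta(s)\,ds=0$, which is (iv).

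Finally, (iv)\,$\Rightarrow$\,(i): when $\int_0^\infty R_\beta(s)\,ds=I_m$ the vector $A(\beta,x_0,b)=(\int_0^\infty E_\beta(s)\,ds)b$ does not depend on $x_0$, so substituting into the Fourier-transform formula of Theorem \ref{Theorem: stationary process}.(b) and using that $\psi(\cdot,\mu_{t_1,\dots,t_n})$ is $x_0$-free shows that every finite dimensional distribution of $X^{\mathrm{stat}}$ — hence its law on $C(\R_+;\R_+^m)$ — is independent of $x_0$. I expect the only delicate point to be the cone argument in (iii)\,$\Rightarrow$\,(iv): one must genuinely invoke that $\R_+^m$ spans $\R^m$ to pass from ``the linear map is constant on the cone'' to ``the linear map vanishes'', and one must make sure the identification $\int x\,\pi_{x_0}(dx)=A(\beta,x_0,b)$ is valid for all $x_0\in\R_+^m$ (not just the one fixed in the running hypotheses). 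All remaining steps are direct bookkeeping with the transformation formulas already established.
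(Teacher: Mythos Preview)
Your proof is correct and follows essentially the same cyclic route (i)\,$\Rightarrow$\,(ii)\,$\Rightarrow$\,(iii)\,$\Rightarrow$\,(iv)\,$\Rightarrow$\,(i) as the paper, with the same key observation that the dependence on $x_0$ enters only through $A(\beta,x_0,b)$. The only minor variation is in (iii)\,$\Rightarrow$\,(iv): you recover $\int x\,\pi_{x_0}(dx)=A(\beta,x_0,b)$ via uniform integrability of $(X_t)$ from Lemma~\ref{lemma: bounded moments}, whereas the paper quotes the first-moment formula for $X^{\mathrm{stat}}$ directly from the corollary following Theorem~\ref{Theorem: stationary process}; your route is slightly more self-contained under the stated hypothesis $E_\beta\in L^1\cap L^2$.
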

\begin{proof}
 Since $\pi_{x_0}$ is the law of $X_t^{\mathrm{stat}}$, clearly (i) implies (ii), and (ii) implies (iii). Suppose that (iii) holds. Using the first moment for the stationary process, we have $A(\beta,x_0,b) = \int_{\R_+^m} y \pi_{x_0}(dy) = \int_{\R_+^m}y \pi_{0}(dy) = A(\beta,0,b)$. The particular form  of $A(\beta,x_0,b)$ readily yields (iv). Finally, suppose that (iv) is satisfied. Then $A(\beta,x_0,b) = \int_0^{\infty} E_{\beta}(t)bdt$ is independent of $x_0$ and hence the Laplace transform for the stationary process implies that $X^{\mathrm{stat}}$ is independent of $x_0$, i.e., (i) holds.
\end{proof}
Finally, we discuss implications and also a sufficient condition for (iv).
\begin{Theorem}\label{Theorem: limiting distribution characterisation}
 Let $X$ be a Volterra square-root process with admissible parameters $(b,\beta, \sigma, K)$, and suppose that $E_{\beta} \in L^1(\R_+; \R^{m \times m})$. Then the following assertions hold:
 \begin{enumerate}
      \item[(a)] If $\int_0^{\infty}R_{\beta}(t)dt = I_m$, then $K\beta$ is not integrable on $\R_+$;
      \item[(b)] If $\beta^{\top} \beta$ has only strictly positive eigenvalues and the function $K_*(t) = \min_{j=1, \dots, m}K_j(t)$ is not integrable over $\R_+$, then $\int_0^{\infty}R_{\beta}(t)dt = I_m$.
 \end{enumerate}
\end{Theorem}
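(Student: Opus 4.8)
The plan is to reduce both parts to a statement about the total mass $\mathcal{R}:=\int_0^{\infty}R_{\beta}(t)\,dt$, which is a finite matrix because $R_{\beta}=E_{\beta}(-\beta)\in L^1(\R_+;\R^{m\times m})$ under the standing hypothesis $E_{\beta}\in L^1$. For (a) I would argue by contradiction: assume in addition $K\beta\in L^1(\R_+;\R^{m\times m})$. Multiplying the resolvent identity \eqref{eq: def Rbeta} for $B=\beta$ --- namely $R_{\beta}\ast K_{\beta}=K_{\beta}-R_{\beta}$ with $K_{\beta}=-K\beta$ --- by $-1$ gives $R_{\beta}\ast(K\beta)=K\beta+R_{\beta}$, and now all three terms lie in $L^1(\R_+;\R^{m\times m})$ (the convolution by Young's inequality). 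Integrating over $\R_+$ and using $\int_0^{\infty}(f\ast g)=\big(\int_0^{\infty}f\big)\big(\int_0^{\infty}g\big)$ then yields $\mathcal{R}\,\mathcal{K}=\mathcal{K}+\mathcal{R}$ with $\mathcal{K}:=\int_0^{\infty}K(t)\beta\,dt$. If $\mathcal{R}=I_m$ this reads $\mathcal{K}=\mathcal{K}+I_m$, i.e. $I_m=0$ --- a contradiction --- so $K\beta\notin L^1(\R_+;\R^{m\times m})$.

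For (b), first note that $\beta^{\top}\beta$ having only strictly positive eigenvalues forces $\beta^{\top}\beta$, hence $\beta$, to be invertible. The idea is to identify $\mathcal{R}$ with the value at $z=0$ of the Laplace transform of $R_{\beta}$. Since $R_{\beta}\in L^1$, dominated convergence shows $z\mapsto\widehat{R_{\beta}}(z)=\int_0^{\infty}e^{-zt}R_{\beta}(t)\,dt$ is continuous on $\{\mathrm{Re}(z)\ge 0\}$ with $\widehat{R_{\beta}}(0)=\mathcal{R}$. For $\mathrm{Re}(z)>0$ each $\widehat{K_i}(z)$ is finite (each $K_i$ is nonincreasing on $(0,\infty)$, hence bounded on $[1,\infty)$, and is locally integrable), and with $\widehat{K}(z)=\mathrm{diag}(\widehat{K_1}(z),\dots,\widehat{K_m}(z))$, taking Laplace transforms in $R_{\beta}\ast K_{\beta}=K_{\beta}-R_{\beta}$ gives $\widehat{R_{\beta}}(z)\big(I_m-\widehat{K}(z)\beta\big)=-\widehat{K}(z)\beta$. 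Subtracting this from $I_m-\widehat{K}(z)\beta$ yields $\big(I_m-\widehat{R_{\beta}}(z)\big)\big(I_m-\widehat{K}(z)\beta\big)=I_m$, so $I_m-\widehat{K}(z)\beta$ is invertible and
\[
 \widehat{R_{\beta}}(z)=I_m-\big(I_m-\widehat{K}(z)\beta\big)^{-1},\qquad \mathrm{Re}(z)>0.
\]
Next I would let $z\downarrow 0$ along the positive reals. Because $K_i\ge K_*\ge 0$ and $K_*$ is not integrable, no $K_i$ is integrable, so monotone convergence gives $\widehat{K_i}(z)\to+\infty$; since moreover $\widehat{K_i}(z)>0$ for $z>0$, the diagonal matrix $\widehat{K}(z)$ is invertible with $\widehat{K}(z)^{-1}\to 0$. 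Factoring $I_m-\widehat{K}(z)\beta=\widehat{K}(z)\big(\widehat{K}(z)^{-1}-\beta\big)$ and using that $\widehat{K}(z)^{-1}-\beta\to-\beta$ is invertible, one gets
\[
 \big(I_m-\widehat{K}(z)\beta\big)^{-1}=\big(\widehat{K}(z)^{-1}-\beta\big)^{-1}\,\widehat{K}(z)^{-1}\longrightarrow (-\beta)^{-1}\cdot 0=0
\]
as $z\downarrow 0$. Hence $\widehat{R_{\beta}}(z)\to I_m$, and continuity of $\widehat{R_{\beta}}$ at $0$ gives $\int_0^{\infty}R_{\beta}(t)\,dt=I_m$.

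Part (a) is a one-line computation; the real work is in (b). The main obstacle is that $K$ itself is generally not integrable (indeed $K_*$ is assumed not to be), so the Laplace-domain identity is only available for $\mathrm{Re}(z)>0$; one must check both that $I_m-\widehat{K}(z)\beta$ is invertible there (which the identity $(I_m-\widehat{R_{\beta}})(I_m-\widehat{K}\beta)=I_m$ furnishes automatically) and that everything extends continuously to $z=0$ (which uses $R_{\beta}\in L^1$). A purely time-domain alternative for (b) is also available: taking $b=0$, the first moment $m(t)=\E[X_t]=\big(I_m-\int_0^t R_{\beta}(s)\,ds\big)x_0$ solves $m=x_0+(K\beta)\ast m$ (by taking expectations in \eqref{eq: VCIR}) and converges to $m_{\infty}=(I_m-\mathcal{R})x_0$; splitting $(K_i\ast(\beta m)_i)(t)$ at $t/2$ and using that $K_i$ is nonincreasing shows it equals $(\beta m_{\infty})_i\int_0^t K_i(s)\,ds+o\big(\int_0^t K_i(s)\,ds\big)$, and since $\int_0^t K_i(s)\,ds\to\infty$ while $m(t)$ is bounded this forces $\beta m_{\infty}=0$, hence $m_{\infty}=0$ for all $x_0\in\R_+^m$; as $\R_+^m$ spans $\R^m$, $\mathcal{R}=I_m$.
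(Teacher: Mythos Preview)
Your primary argument is correct. For part (a) you are actually more elementary than the paper: the paper invokes the Paley--Wiener theorem to guarantee that $I_m+\widehat{K_{\beta}}(z)$ is invertible on $\{\mathrm{Re}(z)\ge 0\}$ and then evaluates the Laplace-domain identity $\widehat{R_{\beta}}=\widehat{K_{\beta}}(I_m+\widehat{K_{\beta}})^{-1}$ at $z=0$, whereas you integrate the time-domain resolvent identity directly via $\int_0^{\infty}(f\ast g)=\big(\int_0^{\infty}f\big)\big(\int_0^{\infty}g\big)$ for $L^1$ matrix functions, sidestepping any invertibility discussion. For part (b) your route coincides with the paper's: the paper writes $\int_0^{\infty}e^{-\lambda t}R_{\beta}(t)\,dt=I_m-\big(I_m+\int_0^{\infty}e^{-\lambda t}K_{\beta}(t)\,dt\big)^{-1}$ and lets $\lambda\downarrow 0$, which is exactly your Laplace identity at $z=\lambda>0$. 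The only difference is the endgame: the paper shows the inverse term vanishes via the norm bound $\langle K(t)\beta v,K(s)\beta v\rangle\ge K_*(t)K_*(s)\,\lambda_{\min}(\beta^{\top}\beta)\,|v|^2$, while your factorization $(I_m-\widehat K(z)\beta)^{-1}=(\widehat K(z)^{-1}-\beta)^{-1}\widehat K(z)^{-1}\to 0$ uses only that $\beta$ is invertible --- which is of course equivalent to the stated eigenvalue hypothesis on $\beta^{\top}\beta$, so the two extract the same content from it.

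A caution on your alternative time-domain argument for (b): the step ``splitting at $t/2$ shows $(K_i\ast(\beta m)_i)(t)=(\beta m_{\infty})_i\int_0^t K_i+o\big(\int_0^t K_i\big)$'' is not justified in general. After subtracting the main term one must control $\int_0^t K_i(t-s)g(s)\,ds$ with $g(s)=(\beta m(s))_i-(\beta m_{\infty})_i$ bounded and tending to zero; whichever way one splits, the ``bad'' half contributes a term of order $\int_{t/2}^t K_i(s)\,ds$ (equivalently $(t/2)K_i(t/2)$), and for admissible nonincreasing kernels this need not be $o\big(\int_0^t K_i\big)$. Concretely, $K_i(t)=(1+t)^{-1/2}$ is completely monotone, satisfies all admissibility conditions, is not integrable, yet $\int_{t/2}^t K_i\big/\int_0^t K_i\to 1-2^{-1/2}\neq 0$. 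So the Laplace-transform argument is the one to keep.
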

\begin{proof}
 (a) Assume $\int_0^{\infty}R_{\beta}(t)dt = I_m$.  Suppose $K_{\beta} := K(-\beta)$ is integrable on $\R_+$. The integrability of $R_{\beta}$ and $K_{\beta}$ on $\mathbb{R}_{+}$ implies that $\widehat{R}_{\beta}(z), \widehat{K}_{\beta}(z)$ are well defined for all $\mathrm{Re}(z) \geq 0$. Using the Paley-Wiener Theorem (see \cite[Chapter 2]{MR1050319}) shows that $\mathrm{det}(I_m + \widehat{K}_{\beta}(z)) \neq 0$. Solving \eqref{eq: def Rbeta} (for the Laplace transforms) yields 
 \begin{equation}\label{Laplace eq for RB and KB}
 \widehat{R}_{\beta}(z) = \widehat{K_{\beta}}(z)(I_m + \widehat{K_{\beta}}(z))^{-1}, \qquad \mathrm{Re}(z) \geq 0.
 \end{equation}
 Evaluating at $z = 0$ gives
 \[
  I_m = \int_0^{\infty} R_{\beta}(t)dt = \int_0^{\infty}K_{\beta}(t)dt \left( I_m + \int_0^{\infty}K_{\beta}(t)dt  \right)^{-1}.
 \]
 Hence $I_m + \int_{0}^{\infty}K_{\beta}(t)dt = \int_{0}^{\infty}K_{\beta}(t)dt$, which is impossible. So $K\beta$ is not integrable on $\R_+$.

 (b) Define $K^{\lambda}_{\beta}(t) = e^{-\lambda t}K_{\beta}(t)$ for $\lambda > 0$. Since $K_1,\dots, K_m$ are nonnegative and nonincreasing, it follows that $e^{-\lambda t}K_1(t), \dots, e^{-\lambda t}K_m(t)$ are integrable on $\R_+$. Hence also $K_{\beta}^{\lambda}$ is integrable on $\R_+$. Let
 $R_{\beta}^{\lambda}(t):= e^{-\lambda t}R_{\beta}(t)$. By the definition in \eqref{eq: def Rbeta}, it is easy to verify that $R_{\beta}^{\lambda}$ is the resolvent of the second kind of $K_{\beta}^{\lambda}$. As mentioned in Remark \ref{integrability of RB and EB}, we have $E_{\beta}(-\beta) = R_{\beta}$, so $R_{\beta} \in L^1(\R_+; \R^{m \times m})$. Similarly to \eqref{Laplace eq for RB and KB}, it holds
 \begin{align*}
  \int_0^{\infty} e^{-\lambda t}R_{\beta}(t)dt &= \int_0^{\infty}e^{-\lambda t}K_{\beta}(t)dt \left( I_m + \int_0^{\infty}e^{-\lambda t}K_{\beta}(t)dt  \right)^{-1}
  \\ &= I_m - \left( I_m + \int_0^{\infty} e^{- \lambda t}K_{\beta}(t)dt \right)^{-1}.
 \end{align*}
 Since $R_{\beta}$ is integrable, the left-hand side converges to $\int_0^{\infty}R_{\beta}(t)dt$ when $\lambda \searrow 0$. Thus it suffices to show that
 \begin{equation} \label{esti of inverse norm}
 \lim_{\lambda \searrow 0} \left( I_m + \int_0^{\infty} e^{- \lambda t}K_{\beta}(t)dt \right)^{-1} = 0.
 \end{equation}
 Note that
 \begin{align*}
  &\ \left\| \left( \int_0^{\infty} e^{- \lambda t}K_{\beta}(t)dt \right)^{-1} \right\|_{2}^{-2}
  \\ &= \inf_{|v| = 1} \left \langle  \left(\int_0^{\infty} e^{- \lambda t}K_{\beta}(t)dt \right) v, \left( \int_0^{\infty} e^{- \lambda t}K_{\beta}(t)dt \right)  v \right \rangle
  \\ &= \inf_{|v| = 1} \int_0^{\infty} \int_0^{\infty} e^{- \lambda (t+s)} \langle K(t)\beta v, K(s)\beta v \rangle ds dt
  \\ &\geq \inf_{|v| = 1} \int_0^{\infty} \int_0^{\infty} e^{-\lambda (t+s)} K_*(t)K_*(s) \langle v,  \beta^{\top} \beta v\rangle ds dt
  \\ &\geq \lambda_{\min}( \beta^{\top} \beta) \left( \int_0^{\infty} e^{- \lambda t} K_*(t) dt \right)^2,
 \end{align*}
 where $\lambda_{\min}( \beta^{\top} \beta) > 0$ denotes the smallest eigenvalue of $\beta^{\top} \beta$. We thus obtain
 $\left\| \left( \int_0^{\infty} e^{- \lambda t}K_{\beta}(t)dt \right)^{-1} \right\|_{2} \to 0$ as $ \lambda \searrow 0$. So
 \[
 \lim_{\lambda \searrow 0} \left\|  \int_0^{\infty} e^{- \lambda t}K_{\beta}(t)dt  \right\|_{2} \ge  \lim_{\lambda \searrow 0} \left\| \left( \int_0^{\infty} e^{- \lambda t}K_{\beta}(t)dt \right)^{-1} \right\|_{2}^{-1}=\infty,
 \]
 which easily implies \eqref{esti of inverse norm}. The assertion is proved.
\end{proof}

\subsection{Sufficient conditions and examples}

Next we provide some examples for admissible parameters $(b,\beta, \sigma, K)$ for which our results are applicable. Here we focus first on completely monotone and then integrable kernels.

By an abuse of notation, let $\langle \cdot, \cdot \rangle$ denote \textit{the inner product} on $\C^m$, i.e., $\langle v, w \rangle=\sum_{j=1}^m v_i\overline{w}_j $. For a matrix $A\in \C^{m\times m}$, we write $A\succeq 0$ if $\langle v,Av \rangle \ge 0$ for all $v\in \C^m$, and write $A\succ 0$ if $\langle v,Av \rangle > 0$ for all nonzero $v\in \C^m$. For another matrix $B\in \C^{m\times m}$ we write $A \succeq B $ if $A-B \succeq 0$, and $A \succ B $ if $A-B \succ 0$. The notations ``$\preceq$" and ``$\prec$" are similarly defined. A kernel $k \in L^1_{\mathrm{loc}}(\R_+; \C^{m \times m})$ is called \textit{completely monotone} with respect to the order induced by $\langle \cdot, \cdot \rangle$, if it is smooth on $(0,\infty)$ and satisfies
\[
 (-1)^n \left( \frac{d}{dt} \right)^n k(t) \succeq 0, \qquad \forall n \in \N.
\]

Note that in the one-dimensional case the above definition of complete monotonicity reduces to the classical one. For additional results on completely monotone functions on $\C^{m\times m}$ we refer to \cite[Chapter 5]{MR1050319}. We are now ready to provide a sufficient condition on how examples based on complete monotonicity can be constructed.
\begin{Proposition}
 Let $K = \widetilde{K}I_m$ with $\widetilde{K} \in L_{loc}^2(\R_+; \R)$ being completely monotone. Let $\beta \in \R^{m \times m}$ be symmetric with only strictly negative eigenvalues. Define $R_{\beta}, E_{\beta}$ by \eqref{eq: def Rbeta} and \eqref{eq: def Ebeta}, respectively. Then $R_{\beta},E_{\beta} \in L^1(\R_+;\R^{m\times m}) \cap L^2(\R_+; \R^{m \times m})$ are completely monotone.
\end{Proposition}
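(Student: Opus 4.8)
The plan is to diagonalise $\beta$ so that the statement decouples into $m$ one–dimensional statements about scalar resolvents of completely monotone kernels, and then to read off the scalar statement from the Laplace transform via Bernstein's theorem. Concretely, since $\beta$ is symmetric and negative definite I would write $\beta=-Q\,\mathrm{diag}(\lambda_1,\dots,\lambda_m)\,Q^{\top}$ with $Q$ orthogonal and $\lambda_i>0$, so that $K_{\beta}(t)=-\widetilde K(t)\beta=Q\,\mathrm{diag}\big(\lambda_1\widetilde K(t),\dots,\lambda_m\widetilde K(t)\big)\,Q^{\top}$. Because the constant matrices $Q$ and $Q^{\top}$ commute with convolution, substituting the ansatz $R_{\beta}(t)=Q\,\mathrm{diag}(s_1(t),\dots,s_m(t))\,Q^{\top}$ into \eqref{eq: def Rbeta} and invoking uniqueness of the resolvent identifies $s_i$ as the scalar resolvent of the second kind of $\lambda_i\widetilde K$, i.e.\ $s_i+s_i\ast(\lambda_i\widetilde K)=\lambda_i\widetilde K$; the same computation, using $s_i\ast\widetilde K=\widetilde K-\lambda_i^{-1}s_i$, yields $E_{\beta}(t)=Q\,\mathrm{diag}\big(\lambda_1^{-1}s_1(t),\dots,\lambda_m^{-1}s_m(t)\big)\,Q^{\top}$, in agreement with $E_{\beta}(-\beta)=R_{\beta}$ and \eqref{eq: def Ebeta}. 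Since conjugation by the fixed orthogonal matrix $Q$ preserves smoothness on $(0,\infty)$, the cone $\{A\succeq 0\}$, and (up to a dimensional constant) the $L^1$– and $L^2$–norms, it suffices to prove: for each $\lambda>0$ the scalar resolvent of the second kind $s$ of $a:=\lambda\widetilde K$ is completely monotone and lies in $L^1(\R_+)\cap L^2(\R_+)$ (the case $\widetilde K\equiv 0$ being trivial, so I assume $\widetilde K\not\equiv 0$).

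For the scalar statement I would argue as follows. The kernel $a$ is nonnegative, completely monotone and locally integrable, so by Bernstein's theorem (see \cite[Chapter 5]{MR1050319}) its Laplace transform is a non-zero Stieltjes function $\widehat a(z)=\int_{[0,\infty)}(z+r)^{-1}\,\mu(dr)$, with $\widehat a(z)\to 0$ as $z\to\infty$ because $a\in L^1_{\mathrm{loc}}$. Solving \eqref{eq: def Rbeta} in the Laplace domain gives $\widehat s=\widehat a/(1+\widehat a)=(1+1/\widehat a)^{-1}$ on $\mathrm{Re}(z)>0$; since the reciprocal of a non-zero Stieltjes function is a complete Bernstein function, $1+1/\widehat a$ is a complete Bernstein function, hence $\widehat s=(1+1/\widehat a)^{-1}$ is again a Stieltjes function. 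As $\widehat s(z)\to 0$ when $z\to\infty$ and $z\widehat s(z)\to 0$ when $z\to 0^+$, the Stieltjes representation of $\widehat s$ carries neither a constant nor a $z^{-1}$ term, i.e.\ $\widehat s(z)=\int_{(0,\infty)}(z+r)^{-1}\,\nu(dr)$; by uniqueness of the Laplace transform this forces $s(t)=\int_{(0,\infty)}e^{-tr}\,\nu(dr)$, which is completely monotone, and Tonelli's theorem gives $\|s\|_{L^1(\R_+)}=\lim_{z\to 0^+}\widehat s(z)=\widehat a(0^+)/(1+\widehat a(0^+))\le 1<\infty$, so $s\in L^1(\R_+)$. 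For the remaining $L^2$–integrability: $s\in L^1_{\mathrm{loc}}$ and $a\in L^2_{\mathrm{loc}}$ give $s\ast a\in L^2_{\mathrm{loc}}$ by Young's inequality, whence $s=a-s\ast a\in L^2_{\mathrm{loc}}$; since $s$ is also nonnegative and nonincreasing we get $\int_1^{\infty}s(t)^2\,dt\le s(1)\int_1^{\infty}s(t)\,dt<\infty$, which combined with $\int_0^1 s(t)^2\,dt<\infty$ yields $s\in L^2(\R_+)$. Assembling the three scalar functions back through $Q$ then gives the proposition.

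The bookkeeping in the diagonalisation step and the $L^1\!\to\!L^2$ upgrade are routine. The step I expect to carry the weight of the argument is the implication ``$\widehat a$ Stieltjes $\Rightarrow\widehat a/(1+\widehat a)$ Stieltjes'', equivalently that the reciprocal of a complete Bernstein function is a Stieltjes function; this is precisely what allows the resolvent of a completely monotone kernel to be recognised as completely monotone, and it is where one has to lean on the theory of completely monotone / Stieltjes / complete Bernstein functions (as developed, e.g., in \cite[Chapter 5]{MR1050319}).
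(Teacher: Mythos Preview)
Your argument is correct. The diagonalisation cleanly reduces the matrix problem to $m$ scalar ones, and your Stieltjes/complete Bernstein manipulation is the standard route to the scalar fact that the resolvent of the second kind of a completely monotone kernel is again completely monotone and integrable; the $L^2$ upgrade via $s\in L^2_{\mathrm{loc}}$ together with monotonicity and $s\in L^1$ is also fine.

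The paper proceeds differently: instead of diagonalising, it verifies directly that $-K\beta$ is completely monotone in the matrix sense (using that $-\beta\succeq 0$ and $\widetilde K$ is completely monotone) and then invokes \cite[Chapter~5, Theorem~3.1]{MR1050319} as a black box for matrix-valued kernels to get $R_\beta$ completely monotone and in $L^1(\R_+)$. For the $L^2$ part the paper uses the matrix ordering $0\preceq R_\beta(t)\preceq R_\beta(t_0)$ for $t\ge t_0$ (together with symmetry of $\beta$) to deduce $\|R_\beta(t)\|_2\le\|R_\beta(t_0)\|_2$, and then combines $R_\beta\in L^2_{\mathrm{loc}}$ with $R_\beta\in L^1(\R_+)$ exactly as you do. So the two proofs are close cousins: the paper stays at the matrix level and cites the key theorem, whereas you descend to scalars first and, in effect, reprove the scalar case of that theorem via the Stieltjes reciprocal trick. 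Your route is a bit more self-contained; the paper's is shorter once the reference is granted.
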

\begin{proof}
 It is clear that $K$ satisfies conditions (iv) -- (vi) from the definition of admissible parameters given in the introduction.
 Observe that $\langle v, (-\beta)v\rangle \geq 0$ for all $v \in \C^m$. Hence
 \[
  \langle v, (-1)^n K^{(n)}(t)(-\beta)v\rangle
  = (-1)^n\widetilde{K}^{(n)}(t) \langle v, (-\beta)v \rangle \geq 0
 \]
 shows that also $-K\beta$ is completely monotone.
 Thus, by \cite[Chapter 5, Theorem 3.1]{MR1050319}, $R_{\beta}$ is  completely monotone and integrable on $\R_+$. Hence $0 \preceq R_{\beta}(t) \preceq R_{\beta}(t_0)$ for $t\ge t_0 > 0$. Using $\beta^{\top} = \beta$ so that $R_{\beta}^{\top} = R_{\beta}$, we find that if $t\ge t_0 > 0$, then for $v \in \C^m$,
 \begin{align*}
     |R_{\beta}(t)v|^2  &=  \langle R_{\beta}(t)^{1/2}v, R_{\beta}(t)R_{\beta}(t)^{1/2}v \rangle
     \\ &\leq  \langle R_{\beta}(t)^{1/2}v, R_{\beta}(t_0)R_{\beta}(t)^{1/2}v \rangle
     \\ &\leq  |v|^2\| R_{\beta}(t_0)\|_{2}  \|R_{\beta}(t)^{1/2}\|_{2}^2
     \\ &\leq  |v|^2\| R_{\beta}(t_0)\|_{2}  \|R_{\beta}(t)\|_{2},
 \end{align*}
 where we have used $\| R_{\beta}(t)^{1/2}\|_2^2 = \sup_{|v|=1}\langle v, R_{\beta}(t)v\rangle \leq \| R_{\beta}\|_2$.
 This shows  $\|R_{\beta}(t)\|_2 \le \| R_{\beta}(t_0)\|_{2}$ for $t\ge t_0$. So
 \begin{align*}
     \int_0^{\infty}\|R_{\beta}(t)\|^2_2 dt  &\leq \int_0^{t_0}\| R_{\beta}(t)\|_{2}^2 dt +\| R_{\beta}(t_0)\|_{2} \int_{t_0}^{\infty} \|R_{\beta}(t)\|_{2} dt.
 \end{align*}
 Since $R_{\beta}$ is locally square integrable, the right-hand side is finite and hence $R_{\beta} \in L^2(\R_+; \R^{m \times m})$. Since $E_{\beta} = R_{\beta}(-\beta)^{-1}$, it follows that also $E_{\beta}$ is completely monotone and belongs to $L^1(\R_+;\R^{m\times m}) \cap L^2(\R_+; \R^{m \times m})$. This proves the assertion.
\end{proof}
Below we apply our results to the fractional kernel.
\begin{Example}
 Let $\beta \in \R^{m \times m}$ be symmetric with only strictly negative eigenvalues and $K(t) = \frac{t^{H-1/2}}{\Gamma(H + 1/2)}I_m$ with $H \in (0,1/2)$. Then $R_{\beta}, E_{\beta} \in L^1(\R_+; \R^{m \times m}) \cap L^2(\R_+; \R^{m \times m})$ are completely monotone and satisfy
 \begin{align*}
     \int_0^{\infty}R_{\beta}(t)dt = I_m, \qquad \int_{0}^{\infty}E_{\beta}(t)dt = (-\beta)^{-1}.
 \end{align*}
 \end{Example}
 \begin{proof}
  In view of previous Proposition it is clear that $R_{\beta}, E_{\beta}$ are completely monotone and satisfy the integrability conditions. The explicit formulas for the integrals can be obtained from the Laplace transforms. Namely,
  \begin{align*}
  \int_{0}^{\infty} R_{\beta}(t) dt
  &= \lim_{z \downarrow 0} \int_{0}^{\infty} e^{-z t}R_{\beta}(t) dt
  \\ &= \lim_{z \downarrow 0} \widehat{K_{\beta}}(z)(I_m + \widehat{K_{\beta}}(z))^{-1}
  \\ &= \lim_{z \downarrow 0} I_m-(I_m + \widehat{K_{\beta}}(z))^{-1}
  \\ &= \lim_{z \downarrow 0} I_m-(I_m  -z^{-H-1/2}\beta)^{-1}
  = I_m
  \end{align*}
  and hence $\int_0^{\infty}E_{\beta}(t)dt = \int_0^{\infty}R_{\beta}(t)(-\beta)^{-1}dt = (-\beta)^{-1}$.
 \end{proof}
The next example also covers the case where $\beta$ is not symmetric but  the fractional kernel is instead replaced by an integrable Gamma kernel.
\begin{Example}
 Let $K(t) = \frac{t^{H-1/2}}{\Gamma(H + 1/2)}e^{-\lambda t} I_m$ with $H \in (0,1/2)$, $\lambda > 0$, and let $\beta \in \R^{m \times m}$ be invertible. Then $R_{\beta}, E_{\beta}$ are integrable if and only if
 \begin{align}\label{eq: spectrum condition beta}
  \sigma(\beta) \cap \left \{ (z + \lambda)^{H + 1/2} \ : \ z \in \C, \ \ \mathrm{Re}(z) \geq 0  \right\} = \emptyset.
 \end{align}
  In such a case one has, by direct computations using Laplace transforms,
 \begin{align}\label{eq: 03}
  \int_{0}^{\infty} R_{\beta}(t) dt = (- \beta) (\lambda^{H+1/2} - \beta)^{-1}, \qquad
  \int_0^{\infty} E_{\beta}(t)dt = (\lambda^{H+1/2} - \beta)^{-1}.
 \end{align}
\end{Example}
\begin{proof}
 Since $-K\beta$ is integrable, the Paley-Wiener Theorem (see \cite[Chapter 2, Theorem 4.1]{MR1050319}) states that $R_{\beta}$ is integrable if and only if $\mathrm{det}(I_m - \widehat{K}(z)\beta) \neq 0$ holds for all $z \in \C$ with $\mathrm{Re}(z) \geq 0$.
 Computing the Laplace transform of $K$ gives $\widehat{K}(z) = (z + \lambda)^{-H - 1/2}I_m$. Hence $\mathrm{det}(I_m - \widehat{K}(z)\beta) \neq 0$  is equivalent to $\mathrm{det}((z+\lambda)^{H + 1/2}I_m - \beta) \neq 0$, i.e., $R_{\beta}$ is integrable if and only if \eqref{eq: spectrum condition beta} holds.
 Since $E_{\beta} = R_{\beta}(-\beta)^{-1}$, we see that  $E_{\beta} \in L^1(\R_+)$ is also equivalent to \eqref{eq: spectrum condition beta}. To prove \eqref{eq: 03}, we can use \eqref{Laplace eq for RB and KB} to get
 \[
 \int_{0}^{\infty} R_{\beta}(t) dt =\widehat{K_{\beta}}(0)(I_m + \widehat{K_{\beta}}(0))^{-1}  = (- \beta) (\lambda^{H+1/2} - \beta)^{-1}.
 \]
 The second inequality in \eqref{eq: 03} now follows again from $E_{\beta} = R_{\beta}(-\beta)^{-1}$.
\end{proof}
Note that condition \eqref{eq: spectrum condition beta} is satisfied, if
 $\sigma(\beta) \subset \{ z \in \C \ : \ \mathrm{Re}(z) < \lambda^{H+1/2} \}$.
Finally, we provide the asymptotics for the autocovariance function associated with the Gamma kernel in dimension $m = 1$, and hence prove the formula \eqref{asym autocovariance}) given in Theorem \ref{Theorem: VCIR 1dim}.
\begin{Example}
 Let $m=1$ and $(b, \beta,\sigma, K)$ be admissible parameters with $\beta < 0$, $\sigma > 0$ and $K(t) = \frac{t^{H-1/2}}{\Gamma(H+1/2)}e^{-\lambda t}$, where $H \in (0,1/2)$ and $\lambda \geq 0$. Then there exist  positive constants $h_0, c(h_0),C(h_0) $ such that for all $h \geq h_0$,
 \[
  c(h_0)h^{-(H+3/2)}e^{-\lambda h} \leq \int_0^{\infty}E_{\beta}(h+u)E_{\beta}(u)du \leq C(h_0)h^{-(H+3/2)}e^{-\lambda h}.
 \]
\end{Example}
\begin{proof}
 For $\alpha \in (0,1)$, define $e_{\alpha}(t) = t^{\alpha - 1}M_{\alpha}(-t^{\alpha})$, where $M_{\alpha}(z) = \sum_{n=0}^{\infty} z^n/\Gamma(\alpha n + \alpha)$ denotes the Mittag-Leffler function. It follows from \cite[Section A.1]{MR3905737} that
 $e_{\alpha}(t) \,\asymp\,  t^{-1-\alpha}$ as $t \to \infty$.
 By \cite[Table 1]{MR4019885}, we have
 $E_{\beta}(t) = |\beta|^{-1+\alpha^{-1}}e^{-\lambda t}e_{\alpha}(|\beta|^{1/\alpha} t)$
 with $\alpha = H + 1/2$. Hence we obtain
 $E_{\beta}(t) \,\asymp\, t^{-H - 3/2}e^{-\lambda t}$ as $t \to \infty$.
 Thus we find $h_0 \geq 1$ large enough and constants $c(h_0),C(h_0) > 0$ such that
 \begin{align*}
  \int_0^{\infty}E_{\beta}(h+u)E_{\beta}(u)du
  &\leq C(h_0)\int_0^{\infty} (h+u)^{-H-3/2}e^{-\lambda (h+u)} E_{\beta}(u)du
  \\ &\leq h^{-(H+3/2)}e^{-\lambda h} C(h_0)\int_0^{\infty}E_{\beta}(u)du
 \end{align*}
 holds for $h \geq h_0$. Similarly, for $h \geq h_0$, we obtain
 \begin{align*}
  \int_0^{\infty}E_{\beta}(h+u)E_{\beta}(u)du
  &\geq c(h_0)\int_0^{\infty} (h+u)^{-H-3/2}e^{-\lambda (h+u)} E_{\beta}(u)du
  \\ &\geq c(h_0)(2h)^{-(H+3/2)}e^{-\lambda h}\int_0^h e^{-\lambda u}E_{\beta}(u)du
 \\ &\geq c(h_0)2^{-(H+3/2)}h^{-(H+3/2)}e^{-\lambda h}\int_0^{h_0} e^{-\lambda u}E_{\beta}(u)du.
 \end{align*}
 Combining both estimates proves the assertion.
\end{proof}

\begin{Remark}
 Under same conditions as in the previous example, if $H = 1/2$, then $E_{\beta}(t) = e^{-(\lambda + |\beta|)t}$ and hence
 \[
  \int_0^{\infty} E_{\beta}(h+u)E_{\beta}(u)du = \frac{e^{-(\lambda + |\beta|)h}}{2(\lambda + |\beta|)}.
 \]
 Thus when $\lambda = 0$, we observe a phase transition from power-law to exponential decay in the asymptotics of the autocovariance function as $H \nearrow 1/2$.
\end{Remark}

Finally, below we provide a  method with which examples for non-scalar-valued kernels $K$ can be constructed.
\begin{Proposition} \label{prop: sufficient condition}
 Let $K = \mathrm{diag}(K_1, \dots, K_m) \in  L^1(\R_+; \R^{m\times m})$ and let $\beta \in \R^{m \times m}$.
 If $\|\beta\|_{2}\sum_{j=1}^m \| K_j \|_{L^1} < 1$, then $R_{\beta}$ is integrable.
\end{Proposition}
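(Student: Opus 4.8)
The plan is to deduce global integrability of $R_\beta$ from the Paley--Wiener theorem, exactly as in the proof of Theorem~\ref{Theorem: limiting distribution characterisation}. First I would record an operator-norm bound for the kernel $K_\beta = -K\beta$. Since $K(t) = \mathrm{diag}(K_1(t),\dots,K_m(t))$ is diagonal, $\|K(t)\|_2 = \max_{1\le j\le m}|K_j(t)| \le \sum_{j=1}^m |K_j(t)|$, and submultiplicativity of $\|\cdot\|_2$ yields $\|K_\beta(t)\|_2 = \|K(t)\beta\|_2 \le \|\beta\|_2\,\|K(t)\|_2$. Integrating over $\R_+$ gives
\[
 \|K_\beta\|_{L^1(\R_+)} = \int_0^\infty \|K(t)\beta\|_2\,dt \;\le\; \|\beta\|_2 \sum_{j=1}^m \|K_j\|_{L^1} \;=:\; q ,
\]
and by hypothesis $q < 1$; in particular $K_\beta \in L^1(\R_+;\R^{m\times m})$, so its Laplace transform $\widehat{K_\beta}(z) = \int_0^\infty e^{-zt}K_\beta(t)\,dt$ is well defined and analytic for $\mathrm{Re}(z)\ge 0$.

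Next I would estimate this transform on the closed right half-plane: for $\mathrm{Re}(z)\ge 0$,
\[
 \|\widehat{K_\beta}(z)\|_2 \le \int_0^\infty e^{-\mathrm{Re}(z)t}\,\|K_\beta(t)\|_2\,dt \le \|K_\beta\|_{L^1(\R_+)} \le q < 1 .
\]
Since $\|\widehat{K_\beta}(z)\|_2 < 1$, the matrix $I_m + \widehat{K_\beta}(z)$ is invertible, with inverse $\sum_{n\ge 0}(-\widehat{K_\beta}(z))^n$, hence $\det\bigl(I_m + \widehat{K_\beta}(z)\bigr)\ne 0$ for every $z$ with $\mathrm{Re}(z)\ge 0$. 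The Paley--Wiener theorem for resolvents of the second kind (\cite[Chapter 2, Theorem 4.1]{MR1050319}), applied to $K_\beta\in L^1(\R_+;\R^{m\times m})$, then gives that the resolvent of the second kind $R_\beta$ of $K_\beta$, defined by \eqref{eq: def Rbeta}, belongs to $L^1(\R_+;\R^{m\times m})$, which is the assertion.

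Alternatively, one can avoid Paley--Wiener and argue by a Neumann series: since $\|K_\beta^{\ast n}\|_{L^1(\R_+)}\le q^n$ by Young's inequality, the series $\sum_{n\ge 1}(-1)^{n+1}K_\beta^{\ast n}$ converges absolutely in $L^1(\R_+;\R^{m\times m})$, its sum is readily checked to satisfy the resolvent identity \eqref{eq: def Rbeta}, and uniqueness of the resolvent identifies it with $R_\beta$. Either way, the only subtlety is the passage from the locally integrable resolvent furnished by \cite[Chapter 2, Theorem 3.1]{MR1050319} to one integrable on all of $\R_+$ -- precisely what the smallness condition $q<1$ buys -- so I do not anticipate any genuine obstacle here.
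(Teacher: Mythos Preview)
Your proof is correct and follows essentially the same route as the paper: both arguments apply the Paley--Wiener theorem after verifying $\|\widehat{K}(z)\beta\|_2 \le \|\beta\|_2 \sum_j \|K_j\|_{L^1} < 1$ on the closed right half-plane, hence $\det(I_m + \widehat{K_\beta}(z)) \neq 0$ there. Your write-up is somewhat more detailed (making the diagonal norm bound $\|K(t)\|_2 = \max_j |K_j(t)| \le \sum_j |K_j(t)|$ explicit), and the alternative Neumann-series argument you offer at the end is a pleasant self-contained bonus not present in the paper.
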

\begin{proof}
 Since $-K\beta$ is integrable, we can use again the Paley-Wiener Theorem
 to find that that $R_{\beta}$ is integrable if and only if
 $\mathrm{det}(I_m - \widehat{K}(z)\beta) \neq 0$,
 for all $z \in \C$ with $\mathrm{Re}(z) \geq 0$.
 Thus it suffices to show that $I_m - \widehat{K}(z)\beta$ is invertible for all $z$ with $\mathrm{Re}(z) \geq 0$.
 This is indeed the case, if $\| \widehat{K}(z)\beta\|_{2} < 1$.
 Estimating $\| \widehat{K}(z)\beta\|_{2} \leq \| \beta \|_{2} \sum_{j=1}^m \| K_j \|_{L^1}$ proves the assertion.
\end{proof}
The proof also provides a necessary condition for the integrability of $R_{\beta}$.
\begin{Remark}
Let $K = \mathrm{diag}(K_1, \dots, K_m) \in  L^1(\R_+; \R^{m\times m})$ and let $\beta \in \R^{m \times m}$. Let $l \in \N$ be the number of distinct eigenvalues $\rho_1,\dots, \rho_l$ of $\beta$. If $R_{\beta}$ is integrable, then
 \begin{align*}
  \rho_k(\widehat{K}_1(z), \dots, \widehat{K}_m(z))^{\top} \neq (1, \dots, 1)^{\top}
 \end{align*}
 holds for each $k = 1,\dots, m$ and $z \in \C$ with $\mathrm{Re}(z) \geq 0$.
\end{Remark}
\begin{proof}
 Arguing as before, $R_{\beta}$ is integrable if and only if $I_m - \widehat{K}(z)\beta$ is invertible. In particular, one has
 $(I_m - \widehat{K}(z)\beta)w \neq 0$ for $w \neq 0$.
 Let $k \in \{1,\dots, m\}$ and $w_k$ be an eigenvector for the eigenvalue $\rho_k$. Since $K$ is diagonal, $\widehat{K}$ is also diagonal, and hence $\widehat{K}(z)\beta w_k =
 \rho_k \widehat{K}(z)w_k \neq w_k$. This proves the assertion.
\end{proof}

\begin{Example}
 Let $(b,\beta, \sigma, K)$ be admissible parameters with $K_j(t) = \frac{t^{H_j - 1/2}}{\Gamma(H_j + 1/2)}e^{-\lambda_j t}$, where $H_j \in (0,1/2)$ and $\lambda_j > 0$. Then $\int_0^{\infty} K_j(t) dt = \lambda_j^{-H_j - 1/2}$.
 Therefore, for each $\beta$ we can find $\lambda_1,\dots, \lambda_m$ large enough such that Proposition \ref{prop: sufficient condition} is applicable.
\end{Example}

\section{Absolute continuity of the law}

In this section we provide sufficient conditions for
the distribution of the Volterra square-root process to be absolutely continuous with respect to the Lebesgue measure. Moreover, we establish also similar results  for the limiting distributions $\pi_{x_0}$.
For $f:\R^{m}\longrightarrow\R$ and $x,h\in\R^{m}$ set $\Delta_{h}f(x)=f(x+h)-f(x)$.
For $\lambda\in(0,1)$ the Besov space $B_{1,\infty}^{\lambda}(\R^{m})$
of order $\lambda$ and integrability $(1,\infty)$ consists of all
equivalence classes of functions $f$ with finite norm
\begin{align}\label{eq: Besov norm}
\|f\|_{B_{1,\infty}^{\lambda}}=\|f\|_{L^{1}}+\sup_{|h|\leq1}|h|^{-\lambda}\|\Delta_{h}f\|_{L^{1}}.
\end{align}
The following condition guarantees that the contribution from the noise term $\int_0^t K(t-s)\sigma(X_s)dB_s$ is non-degenerate on the event where $X$ belongs to the interior of the state space, see \eqref{lower bound}.
\begin{enumerate}
\item[(R)] There exists $\alpha \in [\gamma,2]$ and a constant $C_{*}>0$ such
that
\begin{equation} \label{condition R}
\int_{0}^{h} K_i(r)^2 dr \geq C_{*}h^{\alpha}, \qquad \forall h\in[0,1]
\end{equation}
holds for all $i = 1,\dots, m$.
\end{enumerate}
Note that for an admissible kernel $K$ satisfying \eqref{condition R} one necessarily has $\alpha\geq\gamma$.
\begin{Example}
 Let $K(t) = \frac{t^{H - 1/2}}{\Gamma(H + 1/2)}e^{-\lambda t}I_m$ with $H \in (0,1/2)$ and $\lambda \geq 0$. Then condition (R) holds for $\alpha = 2H = \gamma$. Thus all results below are applicable to this kernel.
\end{Example}
The following is our first main result for this section.
\begin{Theorem}\label{Theorem: regularity process}
 Let $X$ be the Volterra square-root process with admissible parameters $(b,\beta, \sigma, K)$.
 Suppose that condition (R) holds, and that
 \begin{align}\label{eq: minimum sigma}
  \sigma_* := \min\{ \sigma_1, \dots, \sigma_m \} > 0, \qquad \frac{\gamma}{\alpha} > \frac{2}{3}.
 \end{align}
 Denote by $\mu_{t}(dx)$ the distribution of $X_t$ at time $t\geq0$. Then for each $t > 0$ the finite measure
 $\mu_t^{*}(dx) = \min\{1, x_1^{1/2}, \dots, x_m^{1/2}\} \mu_t(dx)$
 has a density $\mu_{t}^{*}(x)$, and there exists $\lambda \in (0,1)$ such that for each $T > 0$,
 \[
 \|\mu_{t}^{*}\|_{B_{1,\infty}^{s}} \leq C(T)(1\wedge t)^{-\alpha/2}, \qquad t \in (0,T],\,s\in(0,\lambda],
 \]
where $C(T) > 0$ is a constant.
\end{Theorem}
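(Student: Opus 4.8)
The proof realizes the Debussche--Fournier scheme from \cite{DF13} (applied to continuous-state branching processes in \cite{MR4130578,MR4127334}) in the Volterra setting. Everything reduces to the estimate: there is $\lambda\in(0,1)$, depending only on $\gamma$ and $\alpha$, such that for each $T>0$ there is $C(T)>0$ with
\begin{equation}\label{eq: DF goal}
 \left| \E\left[ \left( \phi(X_t+h) - \phi(X_t) \right) w(X_t) \right] \right| \leq C(T)\,\|\phi\|_{\infty}\,|h|^{\lambda}\,(1\wedge t)^{-\alpha/2}
\end{equation}
for all $t\in(0,T]$, all $|h|\leq1$, and all bounded measurable $\phi:\R^m\to\R$, where $w(x):=\min\{1,x_1^{1/2},\dots,x_m^{1/2}\}$; the left-hand side of \eqref{eq: DF goal} is, up to flipping the sign of $h$, exactly $\int_{\R^m}\Delta_h\phi\, d\mu_t^*$. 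Granting \eqref{eq: DF goal} and the trivial bound $\|\mu_t^*\|_{L^1}=\E[w(X_t)]\leq1$, a standard mollification/compactness argument (as in \cite[Section 2]{DF13}: \eqref{eq: DF goal} is inherited by $\mu_t^*\ast\rho_\delta$ uniformly in $\delta$ after testing against $\Delta_h\phi$, and one lets $\delta\downarrow0$ using that norm-bounded, uniformly tight subsets of $B^{\lambda}_{1,\infty}$ are relatively compact in $L^1$) shows that $\mu_t^*$ has a density with $\|\mu_t^*\|_{B^{\lambda}_{1,\infty}}\leq C(T)(1\wedge t)^{-\alpha/2}$, whence the claim for every $s\in(0,\lambda]$ since $|h|^{\lambda}\leq|h|^{s}$ for $|h|\leq1$.

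To prove \eqref{eq: DF goal}, fix $t>0$, a scale $\e\in(0,1\wedge t]$ (optimized below), and use \eqref{eq: VCIR 1} to split the stochastic integral at $t-\e$:
\[
 X_t = Y_\e + Z_\e, \qquad Y_\e:=X_t-Z_\e\ \ (\F_{t-\e}\text{-measurable}), \qquad Z_\e:=\int_{t-\e}^{t}E_\beta(t-s)\sigma(X_s)\,dB_s.
\]
Since $\sigma(X_s)=\mathrm{diag}(\sigma_i\sqrt{(X_s)_i})$ is close, for small $\e$, to the $\F_{t-\e}$-measurable matrix $\sigma(X_{t-\e})$, the natural Gaussian surrogate is $\bar X_t:=Y_\e+\widetilde Z_\e$ with $\widetilde Z_\e:=\sigma(X_{t-\e})\int_{t-\e}^{t}E_\beta(t-s)\,dB_s$; given $\F_{t-\e}$, $\widetilde Z_\e$ is a centred Gaussian vector with covariance $\Sigma_\e=\sigma(X_{t-\e})\left(\int_0^{\e}E_\beta(u)E_\beta(u)^{\top}du\right)\sigma(X_{t-\e})$, and $R_\e:=Z_\e-\widetilde Z_\e=\int_{t-\e}^{t}E_\beta(t-s)(\sigma(X_s)-\sigma(X_{t-\e}))\,dB_s$ gives $X_t=\bar X_t+R_\e$. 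The key structural fact is that condition (R), $\sigma_*>0$, and the behaviour of $E_\beta$ near $0$ yield, via \eqref{lower bound}, the bound $\Sigma_\e\succeq c\,\sigma_*^2\,\e^{\alpha}\,w(X_{t-\e})^2\,I_m$: thus the weight $w$ is tailored exactly to cancel the boundary degeneracy of $\sigma$, in that $w(X_{t-\e})\,\lambda_{\min}(\Sigma_\e)^{-1/2}\leq C\e^{-\alpha/2}$.

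Conditioning on $\F_{t-\e}$, the left-hand side of \eqref{eq: DF goal} is controlled by three contributions. (i) Replacing $w(X_t)$ by $w(X_{t-\e})$ costs at most $2\|\phi\|_\infty\E[|X_t-X_{t-\e}|^{1/2}]\leq C\|\phi\|_\infty\e^{\gamma/4}$, as $w$ is $\tfrac12$-Hölder and by Proposition \ref{prop: Hoelder local} with $p=2$. (ii) The Gaussian main term: $\bar X_t+h$ has conditionally a shifted nondegenerate Gaussian density, so $\|\mathcal{N}(a+h,\Sigma)-\mathcal{N}(a,\Sigma)\|_{\mathrm{TV}}\leq C|h|\lambda_{\min}(\Sigma)^{-1/2}$ and the cancellation above give
\[
 \left| \E\left[ \left( \phi(\bar X_t+h)-\phi(\bar X_t) \right) w(X_{t-\e}) \right] \right| \leq C\|\phi\|_\infty |h|\, \E\left[ w(X_{t-\e})\lambda_{\min}(\Sigma_\e)^{-1/2} \right] \leq C\|\phi\|_\infty |h|\, \e^{-\alpha/2}.
\]
(iii) The remainder $\left|\E\left[(\Delta_h\phi(X_t)-\Delta_h\phi(\bar X_t))w(X_{t-\e})\right]\right|$: here one exploits the conditionally Gaussian smoothing carried by $\widetilde Z_\e$ to absorb the perturbation $R_\e$ inside the argument of $\phi$; an integration by parts in the Gaussian/Malliavin direction (as in \cite{DF13,MR4130578}) produces one factor $\lambda_{\min}(\Sigma_\e)^{-1/2}$ and, combined with $\E[|R_\e|]\leq C\e^{3\gamma/4}$ --- which follows from Itô's isometry, $\|\sigma(x)-\sigma(y)\|_{HS}^2\leq C|x-y|$, Proposition \ref{prop: Hoelder local}, and $\int_0^{\e}\|E_\beta(u)\|^2du\leq C\e^{\gamma}$ (a consequence of condition (v) together with Remark \ref{integrability of RB and EB}) --- bounds this contribution by $C\|\phi\|_\infty\,\e^{-\alpha/2}\,\E[|R_\e|]\leq C\|\phi\|_\infty\,\e^{3\gamma/4-\alpha/2}$.

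Summing (i)--(iii) and taking $\e=(1\wedge t)\wedge|h|^{\kappa}$: in the regime $1\wedge t\geq|h|^{\kappa}$ one has $\e=|h|^{\kappa}$ and the three terms are $|h|^{\kappa\gamma/4}$, $|h|^{1-\kappa\alpha/2}$, $|h|^{\kappa(3\gamma/4-\alpha/2)}$, while in the complementary regime $\e=1\wedge t$ and the same bookkeeping produces the factor $(1\wedge t)^{-\alpha/2}$. The remainder exponent $3\gamma/4-\alpha/2$ is strictly positive precisely when $\tfrac{\gamma}{\alpha}>\tfrac23$, i.e. under hypothesis \eqref{eq: minimum sigma}; choosing $\kappa=\tfrac{4}{3\gamma}$ balances (ii) against (iii) and yields \eqref{eq: DF goal} with some $\lambda\in(0,1)$ (for the above bookkeeping, $\lambda=1-\tfrac{2\alpha}{3\gamma}$, noting also $\kappa\gamma/4\geq\lambda$ because $\alpha\geq\gamma$). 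The genuine difficulty is step (iii): $\bar X_t$ and $R_\e$ are both measurable functionals of $B|_{[t-\e,t]}$ and are truly correlated, so one cannot simply translate the Gaussian density of $\bar X_t$ by the random amount $R_\e$; this correlation must be dissolved by a Malliavin integration-by-parts, in the spirit of \cite{DF13}, carefully adapted to the Volterra equation \eqref{eq: VCIR 1}, and it is exactly the trade-off there --- one power of $\lambda_{\min}(\Sigma_\e)^{-1/2}\asymp\e^{-\alpha/2}w(X_{t-\e})^{-1}$ against $\E[|R_\e|]\asymp\e^{3\gamma/4}$ --- that fixes $\lambda$ and forces $\gamma/\alpha>2/3$. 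The remaining ingredients (the non-degeneracy \eqref{lower bound}, the Hölder estimate of Proposition \ref{prop: Hoelder local}, the bound on $\int_0^{\e}\|E_\beta\|^2$, and the elementary Gaussian facts) are routine.
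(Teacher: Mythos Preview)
Your overall architecture (Debussche--Fournier smoothing lemma, short-time Gaussian freezing, remainder controlled by H\"older increments) matches the paper, but your implementation diverges from the paper's in two ways, and the second creates a genuine gap.

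\textbf{First,} you build the approximation on the $E_\beta$-representation \eqref{eq: VCIR 1}, so the frozen Gaussian has covariance $\int_0^\e E_\beta(u)\sigma(X_{t-\e})^2E_\beta(u)^\top du$. But condition (R) is a lower bound on $\int_0^hK_i(r)^2dr$, and the paper's \eqref{lower bound} uses $K$, not $E_\beta$. Since $E_\beta$ is in general not diagonal, the passage from (R) to the eigenvalue lower bound you claim requires an additional argument (essentially that $E_\beta\approx K$ near $0$); the paper sidesteps this by building the approximation $X_t^\e$ directly from \eqref{eq: VCIR} so that the Gaussian piece is $\int_{t-\e}^tK(t-r)\sigma(X_{t-\e})dB_r$ and (R) applies verbatim.

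\textbf{Second, and more seriously,} you test against bounded measurable $\phi$ and then face, in step (iii), the correlated pair $(\bar X_t,R_\e)$. You correctly identify this as ``the genuine difficulty'' and invoke a Malliavin integration by parts ``in the spirit of \cite{DF13}'' without carrying it out. But this is \emph{not} what \cite{DF13} does: their Lemma~2.1 takes test functions in $C_b^\eta$, and the paper exploits exactly this. With $\phi\in C_b^\eta$ the remainder term becomes elementary,
\[
 \E\big[\rho(X_{t-\e})|\Delta_h\phi(X_t)-\Delta_h\phi(X_t^\e)|\big]\leq 2\|\phi\|_{C_b^\eta}\E[|X_t-X_t^\e|^\eta]\leq C(T)\|\phi\|_{C_b^\eta}\e^{3\eta\gamma/4},
\]
with no conditioning, no integration by parts, and no correlation issue whatsoever. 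The resulting bound is $|h|^\eta\e^{3\gamma/8}+\e^{3\eta\gamma/4}+|h|\e^{-\alpha/2}$; one then picks $a\in(\tfrac{4}{3\gamma},\tfrac{2}{\alpha})$ (this interval is nonempty exactly when $\gamma/\alpha>2/3$), takes $\eta$ small enough that $1-\eta-a\alpha/2>0$, sets $\e=|h|^a(1\wedge t)$, and reads off $\lambda>0$. Your Malliavin step, by contrast, is not routine: $R_\e$ depends on the same Brownian increments as $\widetilde Z_\e$, so you cannot simply shift the conditional Gaussian density by $R_\e$, and an actual IBP would have to differentiate through $R_\e$ as well, producing additional terms you have not accounted for.

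In short: relax the test class to $C_b^\eta$ (which is all \cite[Lemma~2.1]{DF13} needs), build the freezing on $K$ rather than $E_\beta$, and step (iii) collapses to a one-line H\"older estimate.
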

\begin{proof}
 Here and below we denote by $C(T) > 0$ a constant which may vary from line to line.
 The proof relies essentially on an application of \cite[Lemma 2.1]{DF13}  to the finite measure $\mu_t^{*}$.

 \textit{Step 1.} For given $t>0$ and $\e\in(0,1\wedge t)$ define
\begin{align}
X_{t}^{\e} &= X_{t-\e} + \int_{t-\e}^{t}K(t-r)\left( b + \beta X_{t-\e} \right)dr \notag
 \\ & \ \ \  +\int_{0}^{t-\e}\left(K(t-r)-K(t-\e-r)\right)\left( b + \beta X_r\right)dr  \label{eq: approximation}
 \\ & \ \ \ + \int_{0}^{t-\e}\left(K(t-r)-K(t-\e-r)\right)\sigma(X_{r})dB_{r} \notag
 \\ &\ \ \ + \int_{t-\e}^{t}K(t-r)\sigma(X_{t-\e})dB_{r}. \notag
\end{align}
 By direct computation we find that
 \[
  X_t - X_t^{\e} = \int_{t - \e}^t K(t-r)\beta (X_r - X_{t-\e})dr + \int_{t-\e}^t K(t-r)\left( \sigma(X_r) - \sigma(X_{t-\e}) \right) dB_r.
 \]
 Using Proposition \ref{prop: Hoelder local} we obtain for the first term
 \begin{align*}
  &\ \E\left[ \left| \int_{t - \e}^t K(t-r)\beta (X_r - X_{t-\e})dr \right|^2 \right]
  \\ & \leq  \| \beta\|_{2}^2\left(\int_{t-\e}^{t}\|K(t-r)\|_{2}^{2}dr\right)\E\left[\int_{t-\e}^{t}|X_{r}-X_{t-\e}|^{2}dr\right]
  \\ &\leq \| \beta\|_{2}^2\left(\int_{t-\e}^{t}\|K(t-r)\|_{HS}^{2}dr\right)\E\left[\int_{t-\e}^{t}|X_{r}-X_{t-\e}|^{2}dr\right]
 \\ &\leq C(T)\e^{2\gamma + 1}.
 \end{align*}
 Similarly we obtain for the second term
 \begin{align*}
  &\ \E\left[ \left| \int_{t-\e}^t K(t-r)\left( \sigma(X_r) - \sigma(X_{t-\e}) \right) dB_r \right|^2 \right]
 \\ &\leq \E\left[\int_{t-\e}^{t}\|K(t-r)\|_{HS}^{2}\|\sigma(X_{r})-\sigma(X_{t-\e})\|_{HS}^{2}dr\right]
 \\ &\leq \sigma^* \sqrt{m} \int_{t-\e}^{t}\|K(t-r)\|_{HS}^{2}\E[|X_{r}-X_{t-\e}|]dr
 \\ &\leq C(T)\e^{3\gamma/2}.
 \end{align*}
 Combining both estimates shows that for each $T > 0$ there exists a constant $C(T) > 0$ such that
 \begin{align}\label{eq: approximation local}
  \E[|X_{t}-X_{t}^{\e}|^{2}]\leq C(T)\e^{3\gamma/2},\qquad t \in (0,T],\ \ \e\in(0,1\wedge t).
 \end{align}

 \textit{Step 2.} Let $\eta\in(0,1)$ and $0<t\le T$. Define $\rho(x) = \min\{1,x_1^{1/2},\dots, x_m^{1/2}\}$. In this step we prove that there exists a constant $C(T)>0$ such that for all $\phi\in C_{b}^{\eta}(\R^{m})$ and $h\in\R^{m}$ satisfying $|h|\leq1$ one has
 \begin{align}\label{eq: auxiliary estimate}
 \bigg|\E[ \rho(X_t) \Delta_{h}\phi(X_{t})]\bigg|
 \leq C(T)\|\phi\|_{C_{b}^{\eta}}\left(|h|^{\eta}\e^{3 \gamma /8}+\e^{3\eta \gamma/4}+|h|\e^{-\alpha/2}\right),
 \end{align}
 for $\e\in(0,1\wedge t]$. Observe that $\left|\E[ \rho(X_t)\Delta_{h}\phi(X_{t})]\right|\leq R_{1}+R_{2}+R_{3}$, where $R_{1},R_{2},R_{3}$ are given by
 \begin{align*}
 R_{1} &= \E\left[\left| \rho(X_t) - \rho(X_{t-\e})\right||\Delta_{h}\phi(X_{t})|\right],
 \\ R_{2} &= \E\left[\rho(X_{t-\e})\left|\Delta_{h}\phi(X_{t})-\Delta_{h}\phi(X_{t}^{\e})\right|\right],
 \\ R_{3} &= \left|\E[\rho(X_{t-\e})\Delta_{h}\phi(X_{t}^{\e})]\right|.
 \end{align*}
 To estimate the first term we use $|\rho(x) - \rho(y)| \leq m |x-y|^{1/2}$ to arrive at
 \begin{align*}
  R_{1} \leq m|h|^{\eta}\|\phi\|_{C_{b}^{\eta}}\E[|X_{t}-X_{t}^{\e}|^{1/2}]
  \leq C(T)|h|^{\eta}\|\phi\|_{C_{b}^{\eta}} \e^{3\gamma / 8}.
 \end{align*}
 For the second term we use $\rho(x) \leq 1$ to find that
 \begin{align*}
  R_{2} \leq 2\|\phi\|_{C_{b}^{\eta}}\E\left[ |X_{t}-X_{t}^{\e}|^{\eta} \right]
  \leq C(T)\|\phi\|_{C_{b}^{\eta}} \e^{3\eta \gamma/4}.
 \end{align*}
 Finally, for the last term we first note that $X_{t}^{\e}=U_{t}^{\e}+V_{t}^{\e}$,
 where $U_{t}^{\e}$ and $V_{t}^{\e}$ are given by
 \begin{align}\label{eq: approximation formula}
  \begin{cases} U_{t}^{\e} =& X_{t-\e}+\int_{t-\e}^{t}K(t-r)(b+\beta X_{t-\e})dr  \\
  & \qquad +\int_{0}^{t-\e}\left(K(t-r)-K(t-\e-r)\right)(b+\beta X_{r})dr
  \\ & \qquad   +\int_{0}^{t-\e}\left(K(t-r)-K(t-\e-r)\right)\sigma(X_{r})dB_{r},
  \\ V_{t}^{\e} =&\int_{t-\e}^{t}K(t-r)\sigma(X_{t-\e})dB_{r}.
  \end{cases}
 \end{align}
 Observe that $V_{t}^{\e}=\int_{0}^{\e}K(\e-r)\sigma(X_{t-\e})d\widetilde{B}_{r}$
 holds almost surely, where $\widetilde{B}_{r}=B_{t-\e+r}-B_{t-\e}$ is a
 new Brownian motion with respect to a new (shifted) filtration $\widetilde{\F}_{r}=\F_{t-\e+r}$.
 Hence $V_{t}^{\e}$ has, for fixed $t$ and $\e$, conditionally on
 $\F_{t-\e}$ a Gaussian distribution with mean zero and variance
 \begin{align*}
 \mathrm{var}(V_{t}^{\e}|\mathcal{F}_{t-\e}) &=\int_{0}^{\e}K(\e-r)\sigma(X_{t-\e})\sigma(X_{t-\e})^{\top}K(\e-r)^{\top}dr.
 \end{align*}
 Hence we obtain for each $x\in\R^{m}$
 \begin{align}
 \langle x,\mathrm{var}(V_{t}^{\e}|\F_{t-\e})x\rangle
 &= \int_{0}^{\e}\langle  K(\e-r)^{\top}x,\sigma(X_{t-\e})\sigma(X_{t-\e})^{\top}K(\e-r)^{\top}x\rangle dr \notag
 \\ &\geq \sigma_*^2 \rho(X_{t-\e})^2 \int_{0}^{\e}\langle x,K(r)K(r)^{\top}x\rangle dr \notag
 \\ &\geq \sigma_*^2 \rho(X_{t-\e})^2 C_* \e^{\alpha} |x|^2. \label{lower bound}
 \end{align}
 The product in front of $|x|^2$ in \eqref{lower bound} is strictly positive on $\{ \rho(X_{t-\e}) > 0 \}$.
 Hence the law of $V_{t}^{\e}$, when restricted to $\{ \rho(X_{t-\e}) > 0 \}$, has conditionally on $\F_{t-\e}$ a density $f_{t}^{\e}(z;X_{t-\e})$. Noting that $U_{t}^{\e}$ is $\F_{t-\e}$
 measurable, we find that
 \begin{align*}
 R_{3} &= \left|\E\left[\int_{\R^{m}} \rho(X_{t-\e})\1_{\{\rho(X_{t-\e}) > 0\}}\Delta_{h}\phi(U_{t}^{\e}+z)f_{t}^{\e}(z;X_{t-\e})dz\right]\right|
 \\ &= \left|\E\left[\int_{\R^{m}}\rho(X_{t-\e})\1_{\{\rho(X_{t-\e}) > 0\}}\phi(U_{t}^{\e}+z)\Delta_{-h}f_{t}^{\e}(z;X_{t-\e})dz\right]\right|
 \\ &\leq \|\phi\|_{C_{b}^{\eta}}|h|\E\left[ \rho(X_{t-\e})\1_{\{\rho(X_{t-\e}) > 0\}} \int_0^1\int_{\R^{m}}|\nabla f_{t}^{\e}(z-rh;X_{t-\e})|dzdr\right]
 \\ &\leq C(T)\|\phi\|_{C_{b}^{\eta}}\E\left[ \rho(X_{t-\e})\1_{\{\rho(X_{t-\e}) > 0\}}\frac{|h|}{\sqrt{\lambda_{\min}(\mathrm{var}(V_{t}^{\e}|\F_{t-\e}))}}\right]
 \\ &\leq C(T)\|\phi\|_{C_{b}^{\eta}}|h|\e^{-\alpha/2},
\end{align*}
where we have used that on $\{ \rho(X_{t-\e}) > 0 \}$,
\begin{align*}
\int_{\R^{m}}|\nabla f_{t}^{\e}(z-rh;X_{t-\e})|dz\leq\frac{C}{\sqrt{\lambda_{\min}(\mathrm{var}(V_{t}^{\e}|\F_{t-\e}))}}
\leq \frac{C(T)}{\rho(X_{t-\e})}\e^{-\alpha/2}
\end{align*}
for $\varepsilon\in(0,1\wedge t]$. Here $\lambda_{\min}(\cdot)$ denotes the smallest eigenvalue of a symmetric positive definite matrix. This proves \eqref{eq: auxiliary estimate}.

 \textit{Step 3.} Since $\frac{\gamma}{\alpha}> \frac{2}{3}$, we find a constant $a$ such that $\frac{4}{3\gamma}< a < \frac{2}{\alpha}$.
 For this choice of $a$, we find sufficiently small $\eta>0$ such that $1-\eta-a\alpha/2>0$.
 Letting $\e=|h|^{a}(1\wedge t)$ with $|h|\leq 1$,
 we find for any $\phi\in C_{b}^{\eta}(\R^{m})$
 \begin{align*}
 &\ \left|\E[ \rho(X_t)\Delta_{h}\phi(X_{t})]\right|
 \\ &\leq C(T)\|\phi\|_{C_{b}^{\eta}}\left( |h|^{\eta+3a\gamma / 8}+|h|^{3a \eta \gamma/4}+|h|^{1-a\alpha/2}\right)(1\wedge t)^{-\alpha/2}
 \\ &\leq C(T) \|\phi\|_{C_{b}^{\eta}}|h|^{\eta+\min\left\{ 3a\gamma / 8,\ 3a \eta \gamma/4 - \eta,1-\eta-a\alpha/2\right\} }(1\wedge t)^{-\alpha/2}
 \\ &\leq C(T)\|\phi\|_{C_{b}^{\eta}}|h|^{\eta+\lambda}(1\wedge t)^{-\alpha/2},
\end{align*}
where $\lambda= \min\{ 3a\gamma / 8, \eta(3a  \gamma/4 - 1),1-\eta-a\alpha/2\} > 0$. This shows that \cite[Lemma 2.1]{DF13} is applicable and hence proves the assertion.
\end{proof}

Note that $\mu_t^*$ is equivalent to $\mu_t$ on $\R_{++}^m = \{ x \in \R_+^m \ : \ x_i > 0, \ \forall i = 1,\dots, m \}$,  the interior of the state space. Hence we have the following corollary.

\begin{Corollary}
Assume that all assumptions of Theorem \ref{Theorem: regularity process} hold true. Then $\mu_t$ has a density when restricted to $\R_{++}^m$, i.e., $\mu_t |_{\R_{++}^m}$ has a density with respect to the Lesbegue measure.
\end{Corollary}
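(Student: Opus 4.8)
The plan is to deduce the statement directly from Theorem \ref{Theorem: regularity process} by dividing out the weight $\rho(x) := \min\{1, x_1^{1/2},\dots, x_m^{1/2}\}$, which is continuous and, crucially, \emph{strictly positive} on the open set $\R_{++}^m = \R_+^m \backslash \partial\R_+^m$.

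First I would recall from Theorem \ref{Theorem: regularity process} that for $t > 0$ the finite measure $\mu_t^*(dx) = \rho(x)\,\mu_t(dx)$ admits a density $\mu_t^*(\cdot) \in L^1(\R^m)$; here only the $L^1$-integrability matters, not the finer Besov regularity. Next I would fix an arbitrary Borel set $A \subseteq \R_{++}^m$ with zero Lebesgue measure and show $\mu_t(A) = 0$. Since $\rho$ is a nonnegative Borel function,
\[
 \int_A \rho(x)\,\mu_t(dx) = \mu_t^*(A) = \int_A \mu_t^*(x)\,dx = 0,
\]
and because $\rho > 0$ everywhere on $A \subseteq \R_{++}^m$, the vanishing of this integral forces $\mu_t(A) = 0$. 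Hence $\mu_t$ restricted to $\R_{++}^m$ is absolutely continuous with respect to the Lebesgue measure; an explicit density on $\R_{++}^m$ is then $x \longmapsto \rho(x)^{-1}\mu_t^*(x)$, which is locally integrable on the interior since $\rho^{-1}$ is bounded on every compact subset of $\R_{++}^m$.

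I do not anticipate any genuine obstacle, as all the analytic content lies in Theorem \ref{Theorem: regularity process}: the only thing to verify is that passing from $\mu_t^*$ back to $\mu_t$ is harmless once one stays away from $\partial\R_+^m$, which is immediate from the continuity and positivity of $\rho$ on the interior. The one point worth a remark is that the resulting density need not be globally integrable over $\R_{++}^m$, because $\rho^{-1}$ blows up near the boundary, but this is not part of the assertion.
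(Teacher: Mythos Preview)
Your argument is correct and matches the paper's own reasoning, which simply notes that $\mu_t^*$ and $\mu_t$ are equivalent on $\R_{++}^m$ because $\rho$ is strictly positive there. One small correction to your final aside: the density $\rho^{-1}\mu_t^*$ \emph{is} globally integrable on $\R_{++}^m$, since $\int_{\R_{++}^m}\rho(x)^{-1}\mu_t^*(x)\,dx = \mu_t(\R_{++}^m) \leq 1$; the blow-up of $\rho^{-1}$ near the boundary is exactly compensated by the factor $\rho$ built into $\mu_t^*$.
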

Note that, without further conditions only $\mu_t^*$ but not necessarily the density of $\mu_t|_{\R^m_{++}}$ belong to the Besov space $B_{1,\infty}^{\lambda}(\R^d)$ with some small $\lambda \in (0,1)$. To see this, let $X$ be the classical CIR process where $K(t) = 1$ and $m = 1$. Assume additionally $\beta = x_0 = 0$ and $\sigma, b > 0$. Then $\frac{2X_t}{\sigma^2 t}$ has $\chi^2$-distribution with $k := \frac{2b}{\sigma^2} > 0$ degrees of freedom (see also \cite[remark after Theorem 4]{MR3084047}), i.e. it has distribution $\nu(x)dx$ with
$\nu(x) = \1_{(0,\infty)}(x)\frac{x^{k/2 -1}e^{-x/2}}{2^{k/2}\Gamma(k/2)}$. In particular, there exists $\lambda \in (0,1)$ with $\nu \in B_{1,\infty}^{\lambda}(\R)$ iff $k = \frac{2b}{\sigma^2} > 2$, while $\min\{1,x^{1/2}\}\nu \in B_{1,\infty}^{\lambda}(\R)$ holds whenever $\lambda \in (0, k/2\wedge 1)$.
\begin{Corollary}\label{Theorem: regularity limit}
 Let $X$ be the Volterra square-root process with admissible parameters $(b,\beta, \sigma, K)$ and initial state $x_0$. Suppose that (R) and (K) are satisfied, and that
 $E_{\beta} \in L^1(\R_+; \R^{m \times m}) \cap L^2(\R_+; \R^{m \times m})$. Assume that
 \[
  \sigma_1, \dots, \sigma_m > 0 \ \text{ and } \
  \frac{\gamma}{\alpha} > \frac{2}{3}.
 \]
 Let $\pi_{x_0}$ be the limiting distribution of $X_t$ as $t\to \infty$. Then $\pi^{*}_{x_0}$ defined by
 $\pi^{*}_{x_0}(dy) = \min\{ 1, y_1^{1/2},\dots, y_m^{1/2}\} \pi_{x_0}(dy)$
 has a density $\pi_{x_0}^{*}(y)$ with respect to the Lebesgue measure, and there exists $\lambda \in (0,1)$ such that
 $\| \pi_{x_0}^{*}\|_{B_{1,\infty}^{s}} < \infty$ holds for $s \in (0,\lambda]$. In particular, $\pi_{x_0}$ is absolutely continuous with respect to the Lebesgue measure, when restricted to $\R_{++}^m$.
\end{Corollary}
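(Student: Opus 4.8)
The plan is to first upgrade Theorem~\ref{Theorem: regularity process} to an estimate that is \emph{uniform in time}, and then to pass to the weak limit $t\to\infty$. The two extra hypotheses here --- condition~(K) and $E_\beta\in L^1(\R_+;\R^{m\times m})\cap L^2(\R_+;\R^{m\times m})$ --- serve exactly two purposes: on the one hand $E_\beta\in L^1(\R_+;\R^{m\times m})$ guarantees, via Theorem~\ref{Theorem: limiting distribution}, that $X_t\Rightarrow\pi_{x_0}$ weakly as $t\to\infty$; on the other hand (K) together with $E_\beta\in L^1\cap L^2$ makes the H\"older bound of Proposition~\ref{prop: Hoelder global} available, i.e. $\E[|X_t-X_s|^p]\le C(p)(t-s)^{\gamma p/2}$ for \emph{all} $s,t\ge0$ with $0\le t-s\le1$ and with a constant not involving any time horizon $T$ (and likewise $\sup_{t\ge0}\E[|X_t|^p]<\infty$ for every $p\ge2$ by Lemma~\ref{lemma: bounded moments}). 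Since moreover $\rho(x):=\min\{1,x_1^{1/2},\dots,x_m^{1/2}\}$ is bounded and continuous on $\R_+^m$, the weak convergence $X_t\Rightarrow\pi_{x_0}$ automatically upgrades to $\mu_t^*\Rightarrow\pi_{x_0}^*$ weakly, where $\pi_{x_0}^*(dy)=\rho(y)\pi_{x_0}(dy)$ is a finite measure.

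First I would rerun the proof of Theorem~\ref{Theorem: regularity process} verbatim, but restricted to times $t\ge1$ and with the local H\"older bound (Proposition~\ref{prop: Hoelder local}) replaced everywhere by the global one (Proposition~\ref{prop: Hoelder global}). Then the approximation estimate \eqref{eq: approximation local} becomes $\E[|X_t-X_t^\e|^2]\le C\e^{3\gamma/2}$ with $C$ \emph{independent of $t$}, valid for $t\ge1$ and $\e\in(0,1]$, and consequently the three terms $R_1,R_2,R_3$ of Step~2 are each bounded with $t$-independent constants. The only point needing care is $R_3$: by \eqref{lower bound} one has $\mathrm{var}(V_t^\e\mid\F_{t-\e})\succeq\sigma_*^2 C_*\e^\alpha\,\rho(X_{t-\e})^2 I_m$, so the random weight $\rho(X_{t-\e})\,\1_{\{\rho(X_{t-\e})>0\}}\big/\sqrt{\lambda_{\min}(\mathrm{var}(V_t^\e\mid\F_{t-\e}))}$ is dominated by the \emph{deterministic} quantity $(\sigma_*\sqrt{C_*})^{-1}\e^{-\alpha/2}$, whence $R_3\le C\|\phi\|_{C_b^\eta}|h|\e^{-\alpha/2}$ with $C$ depending only on $\sigma_*$ and $C_*$. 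Choosing $a\in\big(\tfrac{4}{3\gamma},\tfrac{2}{\alpha}\big)$ (possible since $\gamma/\alpha>\tfrac23$), then $\eta>0$ small, and putting $\e=|h|^a$ (so $1\wedge t=1$), one arrives at a constant $\lambda\in(0,1)$ and a constant $C>0$, both independent of $t$, with
\[
 \big|\E[\rho(X_t)\Delta_h\phi(X_t)]\big|\;\le\;C\,\|\phi\|_{C_b^\eta}\,|h|^{\eta+\lambda},\qquad t\ge1,\ |h|\le1,\ \phi\in C_b^\eta(\R^m).
\]

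Next I would pass to the limit. For fixed $\phi\in C_b^\eta(\R^m)$ and $|h|\le1$, the function $\Delta_h\phi$ is bounded and continuous on $\R^m$, so $\mu_t^*\Rightarrow\pi_{x_0}^*$ gives $\int\Delta_h\phi\,d\pi_{x_0}^*=\lim_{t\to\infty}\int\Delta_h\phi\,d\mu_t^*$; together with the displayed bound this yields $\big|\int_{\R^m}\Delta_h\phi\,d\pi_{x_0}^*\big|\le C\|\phi\|_{C_b^\eta}|h|^{\eta+\lambda}$ for all such $h$ and $\phi$. Applying \cite[Lemma 2.1]{DF13} to the finite measure $\pi_{x_0}^*$ then shows that $\pi_{x_0}^*$ is absolutely continuous with a density $\pi_{x_0}^*(y)$ and $\|\pi_{x_0}^*\|_{B_{1,\infty}^s}<\infty$ for $s\in(0,\lambda]$. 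Since $\pi_{x_0}^*$ and $\pi_{x_0}$ are equivalent on $\R_{++}^m$, the last assertion follows.

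The main effort --- although essentially bookkeeping --- will be the first step: verifying that \emph{every} constant in the proof of Theorem~\ref{Theorem: regularity process} can genuinely be taken independent of $t$ once the global estimates are in force, the delicate instance being the term $R_3$, where the cancellation of the random factor $\rho(X_{t-\e})$ against the conditional-density bound is precisely what makes the estimate $t$-uniform. A conceivable alternative would be to apply the scheme of \cite{DF13} directly to the stationary process $X^{\mathrm{stat}}$, but that requires an analogue of the decomposition \eqref{eq: approximation} for $X^{\mathrm{stat}}$, which is non-Markovian and only obtained as a weak limit; routing through the uniform-in-$t$ bound above sidesteps this difficulty.
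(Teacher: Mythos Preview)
Your proposal is correct and follows essentially the same approach as the paper: rerun the proof of Theorem~\ref{Theorem: regularity process} with Proposition~\ref{prop: Hoelder global} in place of Proposition~\ref{prop: Hoelder local} to obtain $t$-independent constants, then pass to the weak limit and apply \cite[Lemma~2.1]{DF13}. Your explicit discussion of the cancellation in $R_3$ and the restriction to $t\ge1$ (so that $1\wedge t=1$) makes the argument slightly more transparent than the paper's terse ``arguing similarly to Step~2,'' but the content is the same.
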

\begin{proof}
 We follow the same steps of the previous proof, but now all constants can be chosen to be independent of $T$.
 Indeed, let $X_t^{\e}$ be defined as in \eqref{eq: approximation}. Using now Proposition \ref{prop: Hoelder global} we find,  with exactly the same estimates as in the proof of Step 1 in Theorem \ref{Theorem: regularity process}, $\E \left[ |X_{t}-X_{t}^{\e}|^2 \right]\leq C \e^{3\gamma /2}$, for $t>0$, and $\e\in(0,1\wedge t)$, where the constant $C$ is independent of $\e$ and $t$.

 Arguing similarly to Step 2 in the proof of Theorem \ref{Theorem: regularity process}, we obtain for all $\phi\in C_{b}^{\eta}(\R^{m})$ and $h\in\R^{m}$ satisfying $|h|\leq1$,
 \[
  \bigg|\E[ \rho(X_t) \Delta_{h}\phi(X_{t})]\bigg|
 \leq C\|\phi\|_{C_{b}^{\eta}}\left(|h|^{\eta}\e^{3 \gamma /8}+\e^{3\eta \gamma/4}+|h|\e^{-\alpha/2}\right).
 \]
 Hence arguing as in Step 3 we can find
  $a \in (\frac{4}{3}\frac{1}{\gamma}, \frac{2}{\alpha})$
 and $\eta \in (0,1)$ with $1 - \eta - a/\alpha > 0$ such that
 \begin{align*}
 \left|\E[ \rho(X_t)\Delta_{h}\phi(X_{t})]\right|
  \leq \|\phi\|_{C_{b}^{\eta}}|h|^{\eta+\lambda}(1\wedge t)^{-\alpha/2},
\end{align*}
 where $\lambda = \min\{ 3a\gamma / 8, \eta(3a  \gamma/4 - 1),1-\eta-a\alpha/2\} > 0$ and $\phi\in C_{b}^{\eta}(\R^{m})$.
 Hence we find that
\[
 \left| \int_{\R_+^m} \Delta_h\phi(x) \pi^*_{x_0}(dx) \right|
 = \lim_{t \to \infty} \left| \E\left[ \rho(X_t) \Delta_h \phi(X_t) \right] \right|
 \leq C\|\phi\|_{C_{b}^{\eta}}|h|^{\eta+\lambda}.
\]
This shows that \cite[Lemma 2.1]{DF13} is applicable and hence proves the assertion.
\end{proof}

\appendix

\section{Estimates on resolvent}

\begin{Lemma}\label{Ebeta Rbeta regularity}
 Let $K \in L_{loc}^2(\R_+; \C^{m \times m})$ and $B \in \C^{m \times m}$. Let $R_B$ be the resolvent of the second kind for $K_B(t) = - K(t)B$ and let $E_B$ be defined by \eqref{eq: def Ebeta}.
 Then for all $T>0$ and $s,t \in [0,T]$,
 \begin{align*}
    \int_s^t \| E_B(r)\|_2^2 dr &\leq 2\int_s^t \|K(r)\|_2^2 dr
  \\ & \ \ \ + 2 \| R_B \|_{L^1([0,T])}\int_0^s \left( \int_{s-u}^{t-u}\| K(r)\|_2^2 dr \right) \|R_B(u)\|_2 du
  \\ & \ \ \ + 2 \| R_B\|_{L^1([0,T])}^2 \int_0^{t-s} \| K(r)\|_2^2 dr.
 \end{align*}
 Moreover, for each $T>0$ and $h \in (0,1]$ we have
 \begin{align*}
   &\ \int_0^{T}\| E_{B}(t+h) - E_{B}(t)\|_{2}^2 dt
   \leq 3 \| R_B\|_{L^1([0,T+1])}^2 \int_0^{h}\|K(r)\|_2^2 dr
   \\ &\ \ \ + 3 \left(1 + \|R_B\|_{L^1([0,T])}\right)^2\int_0^T \|K(t+h)-K(t)\|_2^2dt.
 \end{align*}
\end{Lemma}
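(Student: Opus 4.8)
The plan is to work throughout from the identity $E_B = K - R_B \ast K$ of \eqref{eq: def Ebeta}, i.e. $E_B(r) = K(r) - \int_0^r R_B(v)K(r-v)\,dv$, and to reduce both bounds to sub-multiplicativity of $\|\cdot\|_2$, the elementary inequality $(\int_a^b h g)^2 \le (\int_a^b g)(\int_a^b h^2 g)$ for $h,g\ge 0$ (already used in Lemma \ref{lemma: bounded moments}), and Fubini. Since $K_B = -KB \in L^2_{\mathrm{loc}}$, Remark \ref{integrability of RB and EB} guarantees $R_B \in L^2_{\mathrm{loc}} \subset L^1_{\mathrm{loc}}$, so all integrals below are finite and the interchanges of integration are justified.

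For the first estimate I would start from $\|E_B(r)\|_2 \le \|K(r)\|_2 + \int_0^r \|R_B(v)\|_2\,\|K(r-v)\|_2\,dv$, square using $(a+b)^2\le 2a^2+2b^2$, and apply the above inequality with $g(v)=\|R_B(v)\|_2$, $h(v)=\|K(r-v)\|_2$ on $[0,r]$, bounding the factor $\int_0^r\|R_B(v)\|_2\,dv$ by $\|R_B\|_{L^1([0,T])}$ for $r\le T$. Integrating the remaining double integral over $r\in[s,t]$ and exchanging the order of integration, the region $\{0\le v\le r,\ s\le r\le t\}$ splits according to $v\le s$ or $s<v\le t$: on the first piece $\int_s^t \|K(r-v)\|_2^2\,dr = \int_{s-v}^{t-v}\|K(\rho)\|_2^2\,d\rho$, while on the second piece $\int_v^t \|K(r-v)\|_2^2\,dr = \int_0^{t-v}\|K(\rho)\|_2^2\,d\rho \le \int_0^{t-s}\|K(\rho)\|_2^2\,d\rho$, with the residual $v$-integral over $[s,t]$ absorbed into a further factor $\|R_B\|_{L^1([0,T])}$. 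Collecting the three contributions reproduces exactly the claimed inequality.

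For the second estimate I would write $E_B(t+h)-E_B(t) = (K(t+h)-K(t)) - \big((R_B\ast K)(t+h)-(R_B\ast K)(t)\big)$ and split the difference of convolutions as $\int_0^t R_B(v)(K(t+h-v)-K(t-v))\,dv + \int_t^{t+h} R_B(v)K(t+h-v)\,dv$. Each of these three pieces is squared via $(a+b+c)^2 \le 3(a^2+b^2+c^2)$ and, for the two convolution pieces, estimated by the same Cauchy--Schwarz trick as above, then integrated in $t$ over $[0,T]$ with Fubini. For the first convolution piece the weight is $\int_0^t\|R_B(v)\|_2\,dv\le \|R_B\|_{L^1([0,T])}$, and after the interchange one has $\int_v^T \|K(t-v+h)-K(t-v)\|_2^2\,dt \le \int_0^T\|K(\rho+h)-K(\rho)\|_2^2\,d\rho$, giving the factor $\|R_B\|_{L^1([0,T])}^2$. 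For the second convolution piece the substitution $w=t+h-v$ turns it into $\int_0^h R_B(t+h-w)K(w)\,dw$, whose weight $\int_0^h\|R_B(t+h-w)\|_2\,dw = \int_t^{t+h}\|R_B(u)\|_2\,du$ and the subsequent $t$-integral $\int_0^T\|R_B(t+h-w)\|_2\,dt$ are both bounded by $\|R_B\|_{L^1([0,T+1])}$ precisely because $0\le h\le 1$ and $t\le T$. Finally I would merge the plain $\|K(t+h)-K(t)\|_2^2$ term with the first convolution contribution using $1+\|R_B\|_{L^1([0,T])}^2 \le (1+\|R_B\|_{L^1([0,T])})^2$, which yields the stated form.

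All computations are elementary; the only place demanding some care is the bookkeeping in the Fubini steps — tracking the domains of integration after the substitutions, and noticing that the boundary piece $\int_t^{t+h}R_B(v)K(t+h-v)\,dv$ forces the slightly larger interval $[0,T+1]$ (rather than $[0,T]$) to appear in the $L^1$-norm of $R_B$. I do not expect any genuine obstacle beyond this routine care.
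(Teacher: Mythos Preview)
Your proposal is correct and follows essentially the same route as the paper's own proof: the same identity $E_B=K-R_B\ast K$, the same three-term decomposition for the increment, the same Jensen/Cauchy--Schwarz step $(\int hg)^2\le(\int g)(\int h^2 g)$, the same Fubini splitting of the domain of integration (including the $v\le s$ versus $s<v\le t$ split in the first part), and the same observation that the boundary term forces the interval $[0,T+1]$ in the second part. Your final merging via $1+\|R_B\|_{L^1}^2\le(1+\|R_B\|_{L^1})^2$ is exactly how the paper combines the $I_1$ and $I_2$ contributions.
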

\begin{proof}
 Using \eqref{eq: def Ebeta} gives $\| E_{B} \|_{L^2([s,t])} \leq \| K\|_{L^2([s,t])} + \| R_{B} \ast K\|_{L^2([s,t])}$. To estimate the second term we first use Jensen's inequality and then the Fubini's Theorem to find that
 \begin{align*}
     \| R_{B} \ast K\|_{L^2([s,t])}^2
     &\leq \| R_B \|_{L^1([0,T])} \int_s^t \int_0^r \| R_B(u)\|_2 \| K(r-u)\|_2^2 dr du
     \\ &= \| R_B \|_{L^1([0,T])}\int_0^s \left( \int_{s-u}^{t-u}\| K(r)\|_2^2 dr \right) \|R_B(u)\|_2 du
     \\ &\ \ \ + \| R_B \|_{L^1([0,T])}\int_s^t \left( \int_0^{t-u}\| K(r)\|_2^2 dr \right) \| R_B(u)\|_2 du
     \\ &\leq \| R_B \|_{L^1([0,T])}\int_0^s \left( \int_{s-u}^{t-u}\| K(r)\|_2^2 dr \right) \|R_B(u)\|_2 du
     \\ & \ \ \ + \| R_B\|_{L^1([0,T])}^2 \int_0^{t-s} \| K(r)\|_2^2 dr.
 \end{align*}
 Combining these estimates proves the first inequality.
 For the second estimate we first note that
 $\| E_B(\cdot +h) - E_B(\cdot) \|_{L^2([0,T])}
     \leq \| I_1 \|_{L^2([0,T])} + \| I_2 \|_{L^2([0,T])} + \| I_3 \|_{L^2([0,T])}$ with $I_1 = K(t+h) - K(t)$, and
     \[
      I_2 = \int_0^t R_B(r)(K(t+h-r) - K(t-r))dr, \ \
      I_3 = \int_t^{t+h}R_B(r)K(t+h-r)dr.
     \]
Below we estimate the last two terms separately. For $I_2$ we obtain from Jensen's inequality
 \begin{align*}
   \|I_2 \|_{L^2([0,T])}^2 &\leq \| R_B\|_{L^1([0,T])} \int_0^T \int_0^t \| R_B(r)\|_2 \| K(t+h-r) - K(t-r)\|_2^2 dr dt
   \\ &\leq \| R_B\|_{L^1([0,T])}^2 \int_0^T \| K(t+h) - K(t)\|_2^2 dt.
 \end{align*}
 Similarly we obtain
 \begin{align*}
   \|I_3 \|_{L^2([0,T])}^2 &\leq \| R_B\|_{L^1([0,T+1])} \int_0^T \int_t^{t+h}\|R_B(r)\|_2 \|K(t+h-r)\|_2^2 dr dt
   \\ &= \| R_B\|_{L^1([0,T+1])} \int_0^T \int_0^h \| R_B(t+r)\|_2 \|K(h-r)\|_2^2 dr dt
   \\ &\leq \| R_B\|_{L^1([0,T+1])}^2 \int_0^h \|K(h-r)\|_2^2 dr
   \\ &= \| R_B\|_{L^1([0,T+1])}^2\int_0^{h}\|K(r)\|_2^2 dr.
 \end{align*}
 Combining all estimates proves the assertion.
\end{proof}
Below we specify the above estimates to the case of admissible parameters $(b,\beta, \sigma, K)$ with $B = \beta$. There we will also use the following observation: for any $t,s \geq 0$ with $s < t$ and $i = 1,\dots, m$, it holds the inequality
\[
 \int_s^t |K_i(r)|^2 dr\le \int_0^{t-s} |K_i(r)|^2 dr\le C_1 (t-s)^\gamma,
\]
since $K_i$ is admissible and thus non-increasing.
\begin{Lemma}\label{lemma: global Ebeta}
 Let $(b,\beta, \sigma, K)$ be admissible parameters and suppose that condition (K) holds.
 Suppose that $E_{\beta} \in L^1(\R_+; \R^{m \times m})$. Then there exists a constant $C > 0$ such that for all $t,s \geq 0$ with $0 \leq t-s \leq 1$, the following inequality holds:
 \begin{align*}
  &\ \int_s^t \| R_{\beta}(r)\|_2^2 dr
  + \int_s^{t}\|E_{\beta}(r)\|_2^2 dr
  \\ &\ \ \ + \int_0^{s}\| E_{\beta}(t-r) - E_{\beta}(s-r)\|_{HS}^2 dr
  + \int_s^{t}\|E_{\beta}(t-r)\|_{HS}^2 dr
  \leq C (t-s)^{\gamma}.
 \end{align*}
\end{Lemma}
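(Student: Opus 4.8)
\emph{Proof strategy.} The plan is to deduce all four estimates from Lemma~\ref{Ebeta Rbeta regularity}, from the elementary bound
\[
 \int_a^b |K_i(r)|^2\,dr \;\leq\; \int_0^{b-a}|K_i(r)|^2\,dr \;\leq\; C_1(b-a)^\gamma, \qquad 0\le a\le b,\ i=1,\dots,m,
\]
which holds because each $K_i$ is admissible and hence nonincreasing, and from the observation that $E_{\beta}\in L^1(\R_+;\R^{m\times m})$ forces $R_{\beta}\in L^1(\R_+;\R^{m\times m})$, since $R_{\beta}=E_{\beta}(-\beta)$. As $K$ is diagonal one has $\|K(t)\|_2\le\|K(t)\|_{HS}$ with $\|K(t)\|_{HS}^2=\sum_{i=1}^m|K_i(t)|^2$, so admissibility~(v) gives $\int_0^h\|K(r)\|_2^2\,dr\le mC_1h^\gamma$ and condition~(K) gives the global bound $\int_0^{\infty}\|K(t+h)-K(t)\|_2^2\,dt\le mC_3h^\gamma$ for $h\in(0,1]$.

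Concretely, I would argue term by term. For $\int_s^t\|E_{\beta}(r)\|_2^2\,dr$, apply the first inequality of Lemma~\ref{Ebeta Rbeta regularity} with $B=\beta$ and $T=t$: the displayed bound yields $\int_{s-u}^{t-u}\|K(r)\|_2^2\,dr\le mC_1(t-s)^\gamma$ and $\int_0^{t-s}\|K(r)\|_2^2\,dr\le mC_1(t-s)^\gamma$, while $\int_0^s\|R_{\beta}(u)\|_2\,du\le\|R_{\beta}\|_{L^1}<\infty$; hence all three terms on the right are bounded by a ($T$-independent) multiple of $(t-s)^\gamma$. The bound for $\int_s^t\|R_{\beta}(r)\|_2^2\,dr$ then follows from $\|R_{\beta}(r)\|_2\le\|\beta\|_2\|E_{\beta}(r)\|_2$, and the bound for $\int_s^t\|E_{\beta}(t-r)\|_{HS}^2\,dr$ from the substitution $v=t-r$, which turns it into $\int_0^{t-s}\|E_{\beta}(v)\|_{HS}^2\,dv\le m\int_0^{t-s}\|E_{\beta}(v)\|_2^2\,dv$, to which the first inequality applies with $s$ replaced by $0$. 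For the increment term I would substitute $v=s-r$, so that $t-r=(t-s)+v$ and the integral becomes $\int_0^s\|E_{\beta}(v+h)-E_{\beta}(v)\|_{HS}^2\,dv$ with $h:=t-s\in(0,1]$; bounding this by $m\int_0^s\|E_{\beta}(v+h)-E_{\beta}(v)\|_2^2\,dv$ and invoking the second inequality of Lemma~\ref{Ebeta Rbeta regularity} with $T=s$, together with $\int_0^h\|K(r)\|_2^2\,dr\le mC_1h^\gamma$ and the global increment bound on $K$ from condition~(K), gives $\le Ch^\gamma=C(t-s)^\gamma$. Taking $C$ to be the maximum of the constants so produced proves the claim.

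The one point requiring care — and the only genuine obstacle — is uniformity in $s$: since $s$ ranges over all of $\R_+$ while only $t-s\le1$ is assumed, the constants extracted from Lemma~\ref{Ebeta Rbeta regularity} must not depend on the horizons $T=t$ or $T=s$. This is exactly where the two standing hypotheses enter: $E_{\beta}\in L^1$ makes $\|R_{\beta}\|_{L^1([0,T])}\le\|R_{\beta}\|_{L^1}$ uniformly bounded in $T$, and condition~(K) replaces the horizon-dependent constant $C_2(T)$ of admissibility~(v) by the absolute constant $C_3$. Granting these, the remaining work is routine bookkeeping of the estimates of Lemma~\ref{Ebeta Rbeta regularity}; without either hypothesis one only recovers a constant $C(T)$ valid on finite time intervals, which is the content of Proposition~\ref{prop: Hoelder local} rather than the global bound needed here.
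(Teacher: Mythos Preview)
Your proposal is correct and follows essentially the same approach as the paper: both arguments reduce all four terms to Lemma~\ref{Ebeta Rbeta regularity}, use the monotonicity of $K_i$ to bound $\int_a^b|K_i|^2\,dr$ by $C_1(b-a)^\gamma$, pass from $R_\beta$ to $E_\beta$ via $R_\beta=E_\beta(-\beta)$, and exploit $E_\beta\in L^1$ together with condition~(K) to make the constants from Lemma~\ref{Ebeta Rbeta regularity} uniform in the time horizon. Your explicit emphasis on why uniformity in $s$ is the crux is well placed and matches the paper's implicit use of $\|R_\beta\|_{L^1(\R_+)}$ and $C_3$ in place of their horizon-dependent counterparts.
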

\begin{proof}
 Lemma \ref{Ebeta Rbeta regularity} gives
 \begin{align*}
   &\  \int_s^t \| R_{\beta}(r)\|_2^2 dr
  + \int_s^{t}\|E_{\beta}(r)\|_2^2 dr
  \\ &\leq (1 + \|\beta\|_2^2) \int_s^t \|E_{\beta}(r)\|_2^2 dr
  \\ &\leq 2(1 + \|\beta\|_2^2) \int_s^t \|K(r)\|_2^2 dr
  \\ & \ \ \ + 2 (1 + \|\beta\|_2^2)\| R_{\beta} \|_{L^1(\R_+)}\int_0^s \left( \int_{s-u}^{t-u}\| K(r)\|_2^2 dr \right) \|R_{\beta}(u)\|_2 du
  \\ & \ \ \ + 2 (1 + \|\beta\|_2^2)\| R_{\beta}\|_{L^1(\R_+)}^2 \int_0^{t-s} \| K(r)\|_2^2 dr
  \\ &\leq 2(1 + \|\beta\|_2^2) m C_1\left( 1+ \|R_{\beta}\|_{L^1(\R_+)}^2\right) (t-s)^{\gamma}
 \end{align*}
 Similarly we obtain from Lemma \ref{Ebeta Rbeta regularity}
 \begin{align*}
     &\ \int_s^{t}\|E_{\beta}(t-r)\|_{HS}^2 dr
     \\ &\leq m \int_0^{t-s}\|E_{\beta}(r)\|_{2}^2 dr
     \\ &\leq 2m \int_0^{t-s} \|K(r)\|_2^2 dr
  + 2m \| R_{\beta}\|_{L^1(\R_+)}^2 \int_0^{t-s} \| K(r)\|_2^2 dr
     \\ &\leq 2m^2C_1(t-s)^{\gamma} + 2m^2 \| R_{\beta}\|_{L^1(\R_+)}^2 C_1 (t-s)^{\gamma}.
 \end{align*}
 Finally, the last term can be estimated by
 \begin{align*}
     &\ \int_0^{s}\| E_{\beta}(t-r) - E_{\beta}(s-r)\|_{HS}^2 dr
     \\ &\leq m \int_0^{s}\| E_{\beta}(t-s + r) - E_{\beta}(r)\|_{2}^2 dr
     \\ &\leq 3m (1+\|R_{\beta}\|_{L^1(\R_+)})^2 \int_0^s\|K(r+t-s)-K(r)\|_2^2dr
     \\ &\ \ \ + 3m \|R_{\beta}\|_{L^1(\R_+)}^2\int_{0}^{t-s}\|K(r)\|_2^2 dr
     \\ &\leq 3m^2 (1+\|R_{\beta}\|_{L^1(\R_+)})^2 C_3(t-s)^{\gamma}
     + 3m^2 \|R_{\beta}\|_{L^1(\R_+)}^2 C_1 (t-s)^{\gamma}.
 \end{align*}
 Combining all estimates proves the assertion.
\end{proof}

\section*{Acknowledgements}
The authors would like to thank the referees for a careful reading of this manuscript which lead to a great improvement of this work.

\bibliographystyle{amsplain}
\phantomsection\addcontentsline{toc}{section}{\refname}\bibliography{Bibliography}

\end{document}